\numberwithin{equation}{section}
\theoremstyle{definition}
\newtheorem{thm}{Theorem}[section]
\newtheorem{prop}[thm]{Proposition}
\newtheorem{rem}[thm]{Remark}
\newtheorem{lemma}[thm]{Lemma}
\newtheorem{definition}[thm]{Definition}
\newtheorem{example}[thm]{Example}
\newcommand{\exponent}{\gamma}
\newcommand{\RP}{\mathbb{P}}
\newcommand{\RD}{\theta}
\newcommand{\RV}{\omega}
\newcommand{\RS}{\Omega}
\newcommand{\CF}{F_A^{\infty}}
\newcommand{\CE}{E_A^{\infty}}
\newcommand{\TO}{\mathcal{L}_{f,\RV}}
\newcommand{\TOF}{\mathcal{L}_{f,F,\RV}}
\newcommand{\CFspace}{C^b(\CF)}
\newcommand{\HFspace}{H^b(\CF)}
\newcommand{\TOk}{\mathcal{L}_{f,\RV_{k-1}}}
\newcommand{\imsF}{\mathcal{I}_{\RP}(\CF\times\RS)}
\newcommand{\h}{H_{\exponent,\RS}}
\newcommand{\cc}{s_{\Phi}}
\newcommand{\dis}{\RS_{d,f}}
\newcommand{\bd}{B_{f}}
\newcommand{\pr}{\RS_{p,f}}
\newcommand{\fin}{\RS_{b,f}}
\newcommand{\K}{K_{f}}
\newcommand{\cb}{\underline{\RS}_{b}}
\newcommand{\mFs}{m_{F,\RV}^s}
\newcommand{\loc}{\RS_{F,\text{loc}}}
\newcommand{\diR}{\RS_{d,\Phi}}
\newcommand{\preintervalF}{\RS_{p,F,\Phi}}
\newcommand{\preinterval}{\RS_{p,\Phi}}
\newcommand{\OSC}{O_F}
\newcommand{\lowb}{\underline{\RS}_{b,\Phi}}
\newcommand{\finFPhi}{\RS_{b,\zeta_F,\Phi}}
\newcommand{\finEPhi}{\RS_{b,\zeta,\Phi}}
\newcommand{\tail}{\RS_{f,\text{tail}}}
\newcommand{\tailR}{\RS_{\Phi,F,\text{tail}}}
\newcommand{\tailRE}{\RS_{\Phi,\text{tail}}}
\newcommand{\RU}{Fin_{\text{RU}}}
\title[Multifractal analysis of Lyapunov spectrum for RCGDMS]{Multifractal analysis of the Lyapunov exponent for random graph directed Markov systems}
\author{Yuya Arima}
\date{\today}
\address{Graduate School of Mathematics, Nagoya University,
Furocho, Chikusaku, Nagoya, 464-8602, JAPAN} 
\email{yuya.arima.c0@math.nagoya-u.ac.jp}
\subjclass[2020]{37C45, 37D35, 37E05, 37H12}
\thanks{{\it Keywords}: multifractal analysis, thermodynamics formalism, random dynamical systems, countable Markov shift}
\begin{document}
\begin{abstract}
In this paper, we perform the multifractal analysis of the Lyapunov exponent for random conformal graph directed Markov systems introduced by Roy and Urba\'nski (2011). We also generalize Bowen's formula for the limit set of a random conformal graph directed Markov system established by Roy and Urba\'nski. 
To do this, we develop several refined properties of random finitely primitive countable Markov shifts.
\end{abstract}

\maketitle

\section{Introduction}
Let us consider a differentiable dynamical system $T:X\rightarrow X$, where for simplicity $X$ is a subset of the $d$-dimensional Euclidean space $\mathbb{R}^d$ and $T$ is conformal. For $x\in X$ we denote by $|T'(x)|$ the Euclidean norm  of the derivative of $T$ at $x$.
In this setting, the Lyapunov exponent of $T$ at $x\in X$ is given by 
\[
\chi(x):=\lim_{n\to\infty}\frac{1}{n}\log |(T^n)'(x)|
\]
whenever the limit exists. 
By Birkhoff's ergodic theorem, if there exists a $T$-invariant  Borel probability measure $\mu$ on $X$ such that $\mu$ is ergodic with respect to $\mu$ and $\int |\log |T'||d\mu<\infty$ then for $\mu$-a.e. $x\in X$ we have $\chi(x)=\int \log |T'|d\mu$. Hence, the set of points $x\in X$ satisfying $\chi(x)$ does not converge to $\int \log |T'|d\mu$  is negligible with respect to $\mu$. However, there is still a possibility that for $\alpha\neq \int \log |T'|d\mu$ the set $L(\alpha)$ of points $x$ that $\chi(x)=\alpha$ is not an empty set and might be a big set from another point of view. Here, the following natural questions arise: What is the typical or exceptional Lyapunov exponent? What values can the Lyapunov exponent take?
To answer these questions, Eckmann and Procaccia \cite{eckmann1986fluctuations} introduced the notion of the Lyapunov spectrum, that is, the map $\alpha\mapsto \dim_H(L(\alpha))$, where $\dim_H(L(\alpha))$ denotes the Hausdorff dimension of the set $L(\alpha)$ with respect to the Euclidean metric on $X$. Inspired by this work, Weiss \cite{LyapunovWeiss} conducted a rigorous study of the Lyapunov spectrum for conformal expanding maps with finitely many branches, employing tools developed by Pesin and Weiss \cite{bookpasin, pesinweisslocal}. After that, many mathematicians have studied the Lyapunov spectrum in the context of various dynamical systems. We refer to \cite{Przytycki, multimodal, Rams, Iommibackward, notconcave, Johannes, infractionPollicot, Kessebhomological, Kesseb,  Pollicott} for results on Lyapunov spectrum. In particular,  \cite{LyapunovWeiss} is generalized to the context of interval maps with countable many branches and infinite topological entropy by Pollicott and Weiss \cite{Pollicott} and Kessenb\"ohmer and Stratmann \cite{Kesseb}. In this paper, we study the Lyapunov spectrum for random conformal graph directed Markov systems introduced by Roy and Urba\'nski \cite{RGDMS}.

\subsection{The statement of our main theorem}
 Let $V$ be a finite set of vertices and let $E$ be a countable infinite set of directed edges. We consider two functions $i:E\rightarrow V$ and $t:E\rightarrow V$ by setting $i(e)$ to be the initial vertex of $e$ and $t(e)$ to be the terminal vertex of $e$ for each directed edge $e\in E$. Let $A:E\times E\rightarrow \{0,1\}$ be a incidence matrix defined by $A_{e_1,e_2}=1$ if and only if $t(e_1)=i(e_2)$ and let $\{X_v\}_{v\in V}$ be a family of non-empty compact subsets  of a common Euclidean space $\mathbb{R}^d$ ($d\geq 1$). A random graph directed Markov system (RGDMS) 
 \[
 \Phi:=(\RD:\RS\rightarrow \RS, \{\RV\mapsto\phi_{e,\RV}\}_{e\in E})
 \]
 is generated by an invertible bimeasurable ergodic measure-preserving map $\RD:\RS\rightarrow \RS$  with respect to a complete probability space $(\RS,\mathcal{F},\RP)$ and one-to-one contractions $\phi_{e,\RV}: X_{t(e)}\rightarrow X_{i(e)}$ with a common contraction ratio $\cc$ (see Definition \ref{def RGDMS} for the precise definition of RGDMSs). Furthermore, we assume that $\Phi$ is conformal (see Definition \ref{def CRGDMS} for the precise definition of conformal RGDMSs). 
 We set $E_A^{\infty}:=\{\tau\in E^{\mathbb{N}\cup\{0\}}:A_{\tau_{n-1},\tau_n}=1,\ n\in\mathbb{N}\}$.
 We denote by $\{\RV\mapsto\pi_\RV\}$ the random coding maps and by $\{\RV\mapsto J_\RV:=\pi_{\RV}(\CE)\}$ the random limit sets (see (\ref{eq def coding map}) for the definition of the random coding maps). In this section, we assume that $\Phi$ satisfies the random boundary separation condition with over finite subedges (see Definition \ref{def RBSCF}). Under this assumption, for all $\RV\in \RS$ the coding map $\pi_\RV:\CE\rightarrow J_\RV$ becomes a bijection.
 We define the family of expanding maps $\{T_{\RV,\Phi}: J_\RV\rightarrow J_{\RD(\RV)}\}_{\RV\in\RS}$ derived from $\Phi$  by 
\[
T_{\RV,\Phi}|_{J_\RV\cap\phi_{e,\RV}(X_{t(e)})}=\phi_{e,\RV}^{-1}
\text{ and } T_{\RV,\Phi}^n:=T_{\RD^{n-1}(\RV),\Phi}\circ\cdots\circ T_{\RV,\Phi}
\ (n\in\mathbb{N}).
\]
In this setting, we define the Lyapunov exponent of $\{T_{\RV,\Phi}: J_\RV\rightarrow J_{\RD(\RV)}\}_{\RV\in\RS}$ at $x\in J_\RV$ ($\RV\in\RS$)  by  
\[
\chi_\RV(x):=\lim_{n\to\infty}\frac{1}{n}\log|(T_{\RV,\Phi}^n)'(x)|
\]
when the limit exists. For $\RV\in\RS$ we also define the level set given by 
\[
L_\RV(\beta):=\left\{
x\in J_\RV:\chi_\RV(x)=\beta
\right\}
\ (\beta\in\mathbb{R}).
\]
For $\RV\in\RS$, using these level sets $L_\RV(\alpha)$ ($\alpha\in \mathbb{R}$), we obtain the following multifractal decomposition of $J_\RV$:
\[
J_\RV:=\bigcup_{\alpha\in \mathbb{R}}L_{\RV}(\alpha)\cup L_\RV',
\]
where $L_\RV'$ the set of points $x\in J_\RV$ for which the limit $\lim_{n\to\infty}(1/n)\log|(T_{\RV,\Phi}^n)'(x)|$ does not exist.
Finally, we define the Lyapunov spectrum $\{\RV\mapsto l_\RV\}$ by 
\[
l_\RV:\mathbb{R}\rightarrow [0,d],\ l_\RV(\beta):=\dim_H(L_\RV(\beta)).
\]

Let $Fin$ be the set of points $s\in\mathbb{R}$ such that the potential $(\tau,\RV)\mapsto s\zeta(\tau,\RV):= -s\log |T'_{\RV,\Phi}(\pi_\RV(\tau))|$ satisfies the summable condition in Definition \ref{def summabel}.
There exists $s_{\infty}\in\mathbb{R}\cup\{-\infty\}$ such that $Fin^{\circ}:=\text{Int}({Fin})=(s_\infty,\infty)$, where $\text{Int}(Fin)$ denotes the set of interior points of $Fin$.
If $E$ is a finite set then $s_\infty=-\infty$.
For $s\in {Fin}$ we denote by $p(s)$ the relative topological pressure for the potential $s\zeta$ (see Proposition \ref{prop definition of the pressure}) and for $s\notin {Fin}$ we set $p(s)=\infty$. 
A conformal RGDMS $\Phi$ is said to be normal (see \cite[(3.21)]{RGDMS}) if 
for each $e\in E$ there exist $M_e\in(0,s_{\Phi}]$  
such that for $\RP$-a.e. $\RV\in\RS$ we have
\begin{align}\label{eq not super expanding}
    \inf \{|\phi_{e,\RV}'(x)|:x\in X_{t(e)}\}>M_e.
\end{align}
We say that $\Phi$ is cofinitely regular if $p(s_\infty)=\infty$.
By the convexity of $s\mapsto p(s)$ on $Fin$, for each $s\in Fin^\circ$ the right-hand derivative $p'_+(s)$ of the function $p$ at $s$ exists.
We set
 $p'(s_\infty):=\lim_{s\to s_\infty+0}p'_-(s)$ and $p'(\infty):=\lim_{s\to+\infty}p'_+(s)$.
\begin{thm}\label{thm main}
Let $\Phi$ be a cofinitely regular normal random conformal graph directed Markov system satisfying the random boundary separation condition with over finite subedges. Then there exists $\RS_{\text{Ly}}\subset \RS$ such that $\RP(\RS_{\text{Ly}})=1$ and for all $\RV\in\RS_{\text{Ly}}$ and $\beta\in (-p'(+\infty),-p'(s_\infty))$ we have
\[
l_\RV(\beta)=\frac{1}{\beta}\inf \{\beta s+p(s):s\in\mathbb{R}\}.
\] 
Moreover, for all $\beta\notin[-p'(+\infty),-p'(s_\infty)]$ we have $L_\RV(\beta)=\emptyset$.
\end{thm}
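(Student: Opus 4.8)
The plan is to reduce the multifractal analysis of the Lyapunov spectrum to the thermodynamic formalism of the associated random countable Markov shift, following the variational/conditional-variational-principle strategy that is standard in the multifractal analysis of conformal systems, but now carried out fiberwise and almost surely. Concretely, I would first transport the problem from the limit sets $J_\RV$ to the symbolic space $\CE$ via the coding maps $\pi_\RV$. Since $\Phi$ satisfies the random boundary separation condition with over finite subedges, each $\pi_\RV$ is a bijection, and one expects $\pi_\RV$ to be bi-Lipschitz-like on cylinders (up to the conformal bounded distortion that conformality provides), so that Hausdorff dimension is preserved under $\pi_\RV$ and the Lyapunov exponent $\chi_\RV(\pi_\RV(\tau))$ is comparable to the symbolic Birkhoff average of the potential $\zeta(\tau,\RV) = -\log|T'_{\RV,\Phi}(\pi_\RV(\tau))|$. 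Thus $L_\RV(\beta)$ corresponds to the set of $\tau$ whose Birkhoff averages of $\zeta$ equal $\beta$, and computing $\dim_H L_\RV(\beta)$ becomes computing the dimension of a level set of Birkhoff averages in the random symbolic system.

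Second, I would establish the key dictionary between dimension and pressure. The function $p(s)$ is the relative (fiberwise) topological pressure of the potential $s\zeta$, and Bowen's formula — which the paper explicitly generalizes for this class — identifies the dimension of a suitable conditional set with the zero (or appropriate root) of a pressure equation. The heart of the argument is the conditional variational principle: for $\beta$ in the interior of the admissible interval one writes
\[
\dim_H L_\RV(\beta) = \frac{1}{\beta}\sup\left\{ h_\mu(\RD,\sigma) : \int \zeta\, d\mu = \beta,\ \mu \in \mathcal{I}_{\RP}(\CE\times\RS) \right\},
\]
where the supremum is over fiberwise-invariant measures $\mu$ with the prescribed conditional Birkhoff integral, and $h_\mu$ is the relative (fiber) entropy. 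Using the Legendre-transform relationship between entropy and pressure, this supremum is exactly $\frac{1}{\beta}\inf\{\beta s + p(s) : s\in\mathbb{R}\}$, which is the claimed formula. To make this rigorous I would prove the lower bound by constructing, for each $s$ achieving (or approximating) the infimum, a random Gibbs/equilibrium measure $\mu_s$ for $s\zeta$ whose conditional Birkhoff average is $-p'_+(s)=\beta$ and whose local dimension under $\pi_\RV$ is the claimed value almost surely; the existence and Gibbs property of such measures is where the cofinite regularity and normality hypotheses are consumed, since they guarantee the pressure is finite and differentiable and that the equilibrium state has the required integrability. The upper bound follows from a covering argument using the Gibbs estimates together with a standard pressure-based mass distribution bound.

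Third, the parameter ranges must be matched: the endpoints $-p'(+\infty)$ and $-p'(s_\infty)$ are precisely the infimum and supremum of attainable Birkhoff averages of $\zeta$, which is why $L_\RV(\beta)=\emptyset$ outside the closed interval. I would verify this by showing that any symbolic Birkhoff average of $\zeta$ lies in $[-p'(+\infty),-p'(s_\infty)]$, using convexity of $p$ on $Fin$ and the identification of $-p'$ with the range of integrals $\int\zeta\,d\mu$; the cofinite regularity hypothesis ($p(s_\infty)=\infty$) ensures the left endpoint is genuinely approached and removes pathologies at the boundary of $Fin$.

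The main obstacle I anticipate is making the almost-sure, fiberwise thermodynamic formalism uniform enough to extract a single full-measure set $\RS_{\text{Ly}}$ that works simultaneously for every $\beta$ in the (uncountable) interval. In the deterministic countable-Markov setting the Gibbs/equilibrium measures and the differentiability of pressure are handled by the transfer operator $\TO$; here each statement holds for $\RP$-a.e.\ $\RV$, but the exceptional null set may depend on $\beta$ or on $s$. The delicate work is therefore to build a countable dense family of parameters $s$, obtain the dimension equality and Gibbs estimates off a single null set for that family, and then upgrade to all $\beta$ by a continuity/monotonicity argument exploiting the concavity of $\beta\mapsto \beta\, l_\RV(\beta)$ and the convexity of $p$. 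Establishing these estimates (bounded distortion, Gibbs property, and the conditional variational principle) uniformly over the fibers — which is exactly what the paper's development of refined properties of random finitely primitive countable Markov shifts is designed to supply — will be the technically heaviest part, especially controlling the countable alphabet and the infinite-entropy features through the summability condition defining $Fin$.
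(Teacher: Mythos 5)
Your outline is the classical ``conditional variational principle plus Legendre duality'' route, and it is genuinely different from what the paper does; unfortunately, as written it has two real gaps. First, the identity $\dim_H L_\RV(\beta)=\frac{1}{\beta}\sup\{h^r_\mu:\int\zeta\,d\mu=\beta\}$ that you call the heart of the argument is never established in the paper and is not available off the shelf for random, countable-alphabet, finitely primitive shifts; the paper only has the (unconditional) variational principle for the pressure (Theorem \ref{thm variational principle}). Proving the conditional version here would be a substantial project in its own right, and your lower bound also needs equilibrium/Gibbs states for $s\zeta$ on the full infinite alphabet whose exceptional null sets do not depend on $s$ --- precisely the obstruction the paper singles out in the introduction (the measures of Theorem \ref{thm perron theorem} come from Crauel's relative Prohorov theorem and their full-measure sets $\RS_t$ depend on $t$). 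Second, your proposed repair --- run the argument over a countable dense family of parameters and upgrade by ``concavity of $\beta\mapsto\beta\,l_\RV(\beta)$'' --- is circular: that concavity is a consequence of the multifractal formalism you are trying to prove, not an a priori input, and for a fixed irrational-type $\beta$ the level set $L_\RV(\beta)$ is not approximated from within by level sets at nearby parameters. (A smaller issue: $\pi_\RV$ is only H\"older, so ``Hausdorff dimension is preserved under $\pi_\RV$'' is not a usable step; the paper instead compares balls with cylinder images at derivative scales via Lemmas \ref{lemma bounded distortion}--\ref{lemma boll conparability} and Proposition \ref{prop markov dimension and local dimension}.)

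The paper's actual proof avoids both difficulties by splitting the two inequalities along entirely different lines. For the upper bound it constructs, for every $\RV$ in a single full-measure set and every $s\in Fin$ simultaneously, fiberwise multifractal measures along the orbit $\{\RD^n\RV\}$ using the classical (not relative) Prohorov theorem applied to conformal measures of finite subsystems (Theorem \ref{thm conformal measure}); a Gibbs estimate (Lemma \ref{lemma gibbs}) then gives $\liminf_{r\to0}\log m^s_\RV(B(x,r))/\log r\le s+p(s)/\beta$ for every $x\in L_\RV(\beta)$, and a Frostman-type lemma yields $l_\RV(\beta)\le s+p(s)/\beta$ for all $s$ at once (Proposition \ref{prop upper bound of the lyapunov spectrum}). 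For the lower bound it never touches equilibrium states of the infinite system: it restricts to finite subalphabets $F_n\nearrow E$, where conformal measures exist for \emph{every} fiber, invokes the known random multifractal analysis of local dimension spectra (Theorem \ref{thm multifractal analysis for local dimension}) after identifying $L_{\RV,F}(\beta)$ with a level set of local dimensions of $m^s_{F,\RV}$ (Propositions \ref{prop markov dimension and local dimension} and \ref{prop lyapunov spectrum and markov dimension}), and then passes to the limit using the compact approximation of the pressure and of its Legendre transform (Proposition \ref{prop compact apploximation of the pressure}, Lemma \ref{lemma compact apploximation of lgendre transform}). If you want to salvage your route you would need to supply the conditional variational principle and an $s$-uniform Gibbs construction; as it stands, those are the missing ingredients.
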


In the context of random dynamical systems coded by random compact subshifts of finite type, various dimension spectra have been studied by performing multifractal analysis. We refer to \cite{Kiferfractal,mayer2011distance, Fengmultifractal, Lin, Zhihui} and the references therein. Roy and Urba\'nski established Bowen's formula with respect to the relative topological pressure function defined by (\ref{def RU pressure}) for the limit set generated by a normal random conformal graph directed Markov system with countable infinitely many edges. However, to our knowledge, there is no known result on the multifractal analysis for random dynamical systems coded by random non-compact subshifts of finite type.


In order to show the main theorem, we first extend the 
thermodynamic formalism for random finitely primitive countable Markov shift developed in \cite{Denkerkiferstadlbauer, RGDMS, StaOnrandomtop, Stacoupling}. Our main result on the thermodynamic formalism for random finitely primitive countable Markov shift is the construction of fiberwise multifractal measures.  By \cite{RGDMS, StaOnrandomtop}, for $t\in Fin$ there exist $\RS_t\subset \RS$ and random conformal measures $\{\widetilde \nu_{\RV}^t\}_{\RV\in\RS}$ such that $\RP(\RS_t)=1$ and for all $\RV\in \RS_t$ the measures $\widetilde \nu_{\RV}^t$ and $\widetilde \nu_{\RD(\RV)}^t$ satisfy the conformal property (see (\ref{eq def conformal measure})). On the other hand, in our main theorem, we take the full-measure set $\RS_{\text{Ly}}$ in a way that does not depend on $\beta\in (-p'(+\infty),-p'(s_\infty))$. 
This choice of the full-measure set $\RS_{\text{Ly}}$ enables us to express the function $l_\RV$ ($\RV\in\RS_{\text{Ly}}$) in terms of the relative topological pressure.
To obtain $\RS_{\text{Ly}}$, we need to take $\RS_t$ in a way that does not depend on $t\in Fin$. However, from the construction of random conformal measures in \cite{StaOnrandomtop} and \cite{RGDMS} using Crauel's relative Prohorov theorem \cite[Theorem 4.4]{crauel2002random}, it is difficult to take $\RS_t$ in a way that does not depend on $t\in Fin$. Therefore, we use an approach different from \cite{StaOnrandomtop} and \cite{RGDMS} to contract fiberwise multifractal measures. The approach is that using the classical Prohorov theorem, for $\RV$ in some full-measure set we construct fiberwise multifractal measures along the orbit $\{\RD^{n}(\RV)\}_{n\in\mathbb{N}\cup\{0\}}$ (see Theorem \ref{thm conformal measure}).


We establish Bowen's formula with respect to the relative topological pressure defined first in \cite{Denkerkiferstadlbauer} (see Proposition \ref{prop definition of the pressure}) for the limit set generated by a normal random conformal graph directed Markov system
(see Theorem \ref{Thm Bowen formula}).
By the definitions of the relative topological pressure defined in Proposition \ref{prop definition of the pressure} and the relative topological pressure defined in \cite{RGDMS} (see  (\ref{def RU pressure})), we can see that our version of Bowen's formula generalizes the one obtained in \cite{RGDMS} for the limit set generated by normal random conformal graph directed Markov systems with the evenly varying assumption (see Definition \ref{def evenly varing} for the definition).
We remark that although the statement of \cite[Theorem 3.26]{RGDMS} does not assume the evenly varying assumption, this assumption seems to play an important role in its proof (see Remark \ref{rem evenly varying}).  
We provide an example of a normal random conformal graph directed Markov system that does not satisfy the evenly varying assumption, for which our Bowen's formula holds, while Bowen's formula established in \cite{RGDMS} fails to hold. (see Example \ref{example Bowen formula}). 

\subsection{Plan of the paper}
In Section \ref{section Basic definitions on random countable Markov shift}, we give basic definitions and known results on random countable Markov shifts. 
In Section \ref{section Several forms of the relative topological pressure}, we discuss equivalent definitions of the relative topological pressure. 
 Section \ref{section Compact approximation property of the relative topological pressure} is devoted to proving the compact approximation property of the relative topological pressure. Section \ref{section Multifractal random conformal measures} we construct fiberwise multifractal measures. 
In Section \ref{section Conformal random graph directed Markov systems (CRGDMSs)}, we first give the definition of random graph detected Markov system and then prove Bowen's formula. 
In Section \ref{section multifractal}, the proof of our main theorem is given.

\section{Random countable Markov shifts}

\subsection{Preliminaries on random countable Markov shifts }\label{section Basic definitions on random countable Markov shift}
Let $E$ be a non-empty countable set and let $A:E\times E\rightarrow \{0,1\}$ be a matrix. We define the shift space by 
$
    E_A^\infty:=\left\{\tau\in E^{\mathbb{N}\cup\{0\}}:A_{\tau_{i-1},\tau_{i}}=1,\ i\in\mathbb{N}\right\}.$
For $n\in\mathbb{N}$ a string $\tau_0\tau_1\cdots\tau_{n-1}\in E^n$ with length $n$ is admissible if for all $i\in\{1,2,\cdots,n-1\}$ we have $A_{\tau_{i-1},\tau_i}=1$.
Let $F$ be a non-empty subset of $E$ and let $F_A^{\infty}:=F^{\mathbb{N}\cup\{0\}}\cap E_A^\infty$.
 We denote by $F^n_A$ $(n\in\mathbb{N})$ the set of all admissible words of length $n$
and by $F^*_A$ the set of all admissible words which have a finite length (i.e. $F^*_A=\cup_{n\in\mathbb{N}}F^n_A$).
For convenience, put $F^0=\lbrace \emptyset \rbrace$.
For $\tau \in F^n_A$ we define the cylinder set of $\tau$ by 
$
[\tau]_F:=\lbrace \tilde\tau \in F_A^\infty:\tilde\tau_i=\tau_i,\  0\leq i\leq n-1\rbrace.$    
If $E=F$ then we will simply write $[\tau]:=[\tau]_E$ for  $\tau\in E^*_A$.
For $n,k\in\mathbb{N}$, $\tilde \tau\in F^n$ and $\tau\in F_A^k$ we set $\tilde{\tau}\tau:=\tilde\tau_0\cdots\tilde\tau_{n-1}\tau_0\tau_1\cdots\tau_{k-1}$ and $\tau'\in \CF$ we set $\tilde\tau\tau':=\tilde\tau_0\cdots\tilde\tau_{n-1}\tau_0'\tau_1'\cdots$. 
Also, for $\tau\in \CF$ and $n\in\mathbb{N}$ we set $\tau|_{n-1}:=\tau_0\cdots \tau_{n-1}$.
We say that the shift space $F_A^\infty$ is finitely primitive if there exist $N_{A,F}\in\mathbb{N}$ and a finite set $\Lambda^F\subset F^{N_{A,F}}_A$ such that for all $e,\tilde e\in F$ there exists $\tau\in \Lambda^F$ such that $e\tau \tilde e$ is admissible. Throughout this paper, we will assume that the shift spaces $\CE$ and $\CF$ are finitely primitive.
We endow $\CF$ with the topology $\mathcal{O}$ generated by the following metric: 
\begin{align*}
     d(\tau,\tau'):= \left\{
 \begin{array}{cc}
   {e^{-k}}   & \text{if}\  \tau_i=\tau_i'\ \text{for\ all}\ i=0,\cdots,k-1\ \text{and}\ \tau_{k}\neq\tau_{k}'\\
   0   & \text{if}\ \tau=\tau'
 \end{array}\ \ (\tau,\tau'\in \CF).
 \right. 
\end{align*} 
 We shall denote by $\mathcal{B}$ the $\sigma$-algebra generated by $\mathcal{O}$.
We define the left-shift $\sigma:\CF\rightarrow \CF$ by
$
\sigma(\tau_0\tau_1\tau_2\cdots):=\tau_1\tau_2\cdots.$

For $F\subset E$ we will introduce spaces of functions on $\CF$.
  We denote by $C(F^\infty_A)$ the set of all continuous  functions on $F^\infty_A$ and by $C^b(F^\infty_A)$ the set of all bounded continuous  functions on $F^\infty_A$. For $g\in C^b(F^\infty_A)$ we define $\|g\|_{\infty}:=\sup\{|g(\tau)|:\tau\in F^\infty_A\}$. Note that the space $C^b(F^\infty_A)$ equipped with the supremum norm $\|\cdot\|_\infty$ is a Banach space. We denote by $(C^b(\CF))^*$ the dual space of $C^b(F^\infty_A)$.
Let $\exponent>0$. For a function $g$ on $\CF$ and $n\in\mathbb{N}$ we define 
\begin{align*}
v_{\exponent,n}(g):=
\sup_{{w}\in E^n_A}
\sup\left\{
\frac{|g(\tau)-g(\tilde \tau)|}{(d(\tau,\tilde\tau))^{\exponent }}
:\tau,\tilde \tau\in [{w}]_F 
\right\}
\text{ and }
v_\exponent(g):=
\sup_{n\in\mathbb{N}}v_{\exponent,n}
\end{align*}
A  function $g$ on $\CF$ is called locally H\"older with exponent $\exponent$ if $v_\exponent(g)<\infty$. We will denote by $H_\exponent(F_A^\infty)$ the set of all locally H\"older functions $g$ on $\CF$  with exponent $\exponent$ and by $H_\exponent^b(F_A^\infty)$ the set of all locally H\"older bounded functions $g$ on $\CF$ with exponent $\exponent$. We endow $H_\exponent^b(F_A^\infty)$ with the H\"older norm $\|\cdot\|_\exponent$ with exponent $\exponent$ defined by $\|g\|_{\exponent}=\|g\|_\infty+v_\exponent(g)$ ($g\in H_\exponent^b(F_A^\infty)$). Then, the space $ H_\exponent^b(F_A^\infty)$ is a Banach space.

Let $(\RS,\mathcal{F},\RP)$ be a complete probability space and let $\RD:\RS\rightarrow\RS$ be an invertible map and bimeasurable. We assume that $\RD:\RS\rightarrow\RS$ is measure preserving and ergodic with respect to the probability space $(\RS,\mathcal{F},\RP)$. We consider the measurable space $(\CF\times\RS,\mathcal{B}\otimes\mathcal{F})$, where $\mathcal{B}\otimes\mathcal{F}$ denotes the product $\sigma$-algebra of $\mathcal{B}$ and $\mathcal{F}$. 
Let  $\varPi_{\RS}:\CF\times\RS\rightarrow\RS$ be the canonical projections onto $\RS$.
Let $\sigma\times\RD:\CF\times\RS\rightarrow\CF\times\RS$ be the skew-product map defined by 
$(\sigma\times\RD)(\tau,\RV):=(\sigma(\tau),\RD(\RV)).$ 
Let $\mathcal{P}_\RP(\CF\times\RS)$ be the space of all random probability measures $ \widetilde m$ on the measurable space $(\CF\times\RS,\mathcal{B}\otimes\mathcal{F})$ whose marginal is $\RP$, i.e. $\widetilde m\circ\varPi_\RS^{-1}=\RP$ and let $\mathcal{I}_{\RP}(\CF\times\RS)$ be the set of all $(\sigma\times \RD)$-invariant measures $\widetilde \mu\in \mathcal{P}_\RP(\CF\times\RS)$. For $\widetilde m\in \mathcal{P}_\RP(\CF\times\RS)$ we denote by $\{\widetilde m_\RV\}_{\RV\in\RS}$ the corresponding the disintegration of $\widetilde m$ (see \cite[Proposition 3.6]{crauel2002random}).
 Since $\RD$ is invertible and $\RP$-preserving, $\widetilde\mu\in\mathcal{I}_{\RP}(\CF\times\RS)$ if and only if $\widetilde\mu_{\RV}\circ \sigma^{-1}=\tilde \mu_{\RD(\RV)}$ for $\RP$-a.e. $\RV\in\RS$.

Next, we describe spaces of function on $\CF\times \RS$.
A  function $f:\CF\times\RS\rightarrow \mathbb{R}$ is called a random continuous function on $\CF$ if $f$ satisfy the following two conditions: (1) for all $\tau\in\CF$ the $\tau$-section $\RV\in\RS \mapsto f_\tau(\RV):=f(\tau,\RV)$ is measurable. (2) for all $\RV\in\RS$ the $\RV$-section $\tau\in \CF\mapsto f_\RV(\tau):=f(\tau,\RV)$ is continuous on $\CF$.
We denote by $C_\RS(\CF)$ the set of all random continuous functions on $\CF$.
    A random continuous function $f\in C_\RS(\CF)$ is bounded if $f$ satisfy the following two conditions: (1) for all $\RV\in\RS$ we have $\|f_\RV\|_{\infty}<\infty$. (2) we have $\|f\|_{\infty}:=\text{ess}\sup\{\|f_\RV\|_\infty:\RV\in\RS\}<\infty$.
We denote by $C_\RS^b(\CF)$ the set of all bounded random continuous functions on $\CF$.
\begin{definition}
    A random continuous function $f\in C_\RS(\CF)$ is random locally H\"older with exponent $\exponent>0$ if we have
    $
    v_\exponent(f_\RV)<\infty$  for all $\RV\in\RS$ and $v_\exponent(f):=\text{ess}\sup\{v_\exponent(f_\RV):\RV\in\RS\}<\infty.$  
\end{definition}
We fix $\exponent>0$.
We denote by $H_{\exponent,\RS}(\CF)$ the set of all locally H\"older functions with exponent $\exponent$.
For $f\in H_{\exponent,\RS}(\CF)$ we set 
\[
\dis:=\bigcap_{k\in\mathbb{Z}}\RD^k(\{\RV\in\RS: v_\exponent(f_{\RV})\leq v_{\exponent}(f)\})
\text{ and }
\bd:=\exp\left( v_\gamma(f)
\sum_{k=0}^\infty
e^{-k\exponent}
\right)
.
\]
Since $\RD$ is invertible and $\RP$-preserving, if $f\in H_{\exponent,\RS}(\CF)$ we have $\RP(\dis)=1$.
We then introduce the random Ruelle operator associated with a  potential $f\in H_{\exponent,\RS}(\CF)$.
Let $f\in H_{\exponent,\RS}(\CF)$.
For $g\in C^b(\CF)$ we define the random Ruelle operator $\TOF$ associated with a potential $f$ by 
\[
\TOF(g)(\tau):=\sum_{e\in F, A_{e,\tau_0}=1}
e^{ f(e\tau,\RV)}g(e\tau).
\]
Let $1_A$ be the indicator function with respect to $A\subset \CF$ and let $1=1_{\CF}$.  
\begin{definition}\label{def summabel}
    A potential $f\in H_{\exponent,\RS}(\CF)$ is called a summable potential if we have 
    \[
    \int \log M_f(\RV) d\RP(\RV)<\infty, \text{ where } 
    M_f(\RV):=\sup\{\TOF({1})(\tau):\tau\in \CF\}.
    \]
Moreover, a potential $f\in H_{\exponent,\RS}(\CF)$ is called a 
normal summable potential if $f$ is summable and we have 
    \[
    \int \log \underline{M}_f(\RV) d\RP(\RV)>-\infty, \text{ where } 
    \underline{M}_f(\RV):=\inf\{\TOF({1})(\tau):\tau\in \CF\}.
    \]
\end{definition}
We denote by $H_{\exponent,\RS}^s(\CF)$ the set of all summable functions and by $H_{\exponent,\RS}^{ns}(\CF)$ the set of all normal summable functions. For $f\in H_{\exponent,\RS}(\CF)$ we set 
\begin{align}\label{eq definision of fin}
\fin:=\bigcap_{k\in\mathbb{Z}}\RD^k(\{\RV\in\RS: M_{f}(\RV)<\infty\}). 
\end{align}
Since $\RD$ is invertible and $\RP$-preserving, if $f\in H_{\exponent,\RS}^s(\CF)$ then we have $\RP(\fin)=1$. 
As in the deterministic case, one can show that for all
$\RV\in\fin$ we have $\TOF(\CFspace)\subset \CFspace$ and for all $\RV\in\dis\cap\fin$ we have $\TOF(\HFspace)\subset \HFspace$. For $n\geq1$ and $\RV\in\fin$ we define 
$\TOF^n(g):=\mathcal{L}_{f,F,\RD^{n-1}(\RV)}\circ\cdots\circ\TOF(g)$.
By a standard calculation, we can show that 
\[
\TOF^n(g)(\tau)=
\sum_{a\in F_A^{n},\ A_{a_{n-1},\tau_0}=1}
\exp\left(
S_nf(a\tau,\RV))g(a\tau
\right),
\]
$\text{where }
S_kf(\tau,\RV):=\sum_{l=0}^{k-1}f((\sigma\times\RD)^l(\tau,\RV)).$
For 
$\RV\in\fin$ we denote by $\TOF^*$ the dual operator of the operator $\TOF:\CFspace\rightarrow\CFspace$. 
For simplicity, if $F=E$ then we will write $\TO:=\mathcal{L}_{f,E,\RV}$,
  $\TO^*:=\mathcal{L}_{f,E,\RV}^*$ and $\TO^n:=\mathcal{L}_{f,E,\RV}^n$ for all $n\in\mathbb{N}$.
For $e\in F$, $\RV\in\RS$ and $n\in\mathbb{N}$ we set
$F_{A,e}^n:=\{\tau\in F^n_A:\tau_0=e,\ A_{\tau_{n-1},e}=1\}$
and define 
\[
Z_{n,e,F}^\RV(f)
:=
\sum_{\tau\in F_{A,e}^n}
\exp
\left(
\sup\{
S_nf(\tilde \tau,\RV)
:
\tilde \tau\in[\tau]_F
\}
\right)
\]
if $F_{A,e}^n\neq\emptyset$ and  $Z_{n,e,F}^\RV(f)=0$ otherwise. For $e\in F$ we fix $\xi_e\in [e]_F$. For $e\in F$, $\RV\in\RS$ and $n\in\mathbb{N}$ we also define 
$
\mathfrak{L}_{n,e,F}^\RV(f):=\TOF^n(1_{[e]_F})(\xi_e).
$
\begin{prop}[{\cite[Proposition 3.1 and Lemma 3.2]{StaOnrandomtop}}]\label{prop definition of the pressure}
    Let $F\subset E$ be a non-empty set and let $f\in H_{\exponent,\RS}^s(\CF)$. We assume that $\CF$ is finitely primitive. Then there exist a measurable set $\RS'\subset \RS$  and a constant $P_F(f)\in \mathbb{R}\cup\{-\infty\}$ such that $\RP(\RS')=1$ and for all $\RV\in\RS'$ and $e\in F$ we have
    \[
    P_F(f)
    =\lim_{n\to\infty}\frac{1}{n}\log Z_{n,e,F}^\RV(f)
    =\lim_{n\to\infty}\frac{1}{n}\log \mathfrak{L}_{n,e,F}^\RV(f)\geq-\infty
    .\]
    Moreover, if $f\in H_{\exponent,\RS}^{ns}(\CF)$ then $P_F(f)$ is finite. 
\end{prop}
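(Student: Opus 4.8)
The plan is to realize $P_F(f)$ as the almost-sure limit produced by Kingman's subadditive ergodic theorem (in the form that permits the value $-\infty$) applied to the cocycle
\[
a_n(\RV):=\log\|\TOF^n(1)\|_\infty\qquad(n\in\mathbb N),
\]
and then to transfer this limit to $Z_{n,e,F}^\RV(f)$ and $\mathfrak{L}_{n,e,F}^\RV(f)$ by means of bounded distortion and finite primitivity. First I would check the hypotheses of Kingman. Since $\TOF$ is a positive operator and $g\le\|g\|_\infty 1$ pointwise, the cocycle identity $\TOF^{n+m}=\mathcal{L}_{f,F,\RD^n(\RV)}^{m}\circ\TOF^{n}$ gives $\TOF^{n+m}(1)\le\|\TOF^n(1)\|_\infty\,\mathcal{L}_{f,F,\RD^n(\RV)}^{m}(1)$ pointwise, hence $a_{n+m}(\RV)\le a_n(\RV)+a_m(\RD^n(\RV))$; thus $(a_n)$ is exactly subadditive over the ergodic system $(\RS,\RD,\RP)$. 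As $a_1(\RV)=\log M_f(\RV)$ and $f$ is summable, the positive part $a_1^+$ is integrable, so Kingman applies and $\tfrac1n a_n\to P_F(f):=\inf_n\tfrac1n\int a_n\,d\RP\in\mathbb R\cup\{-\infty\}$ almost surely; ergodicity of $\RD$ forces the limit to be the constant $P_F(f)$. On $\fin$ each $a_n(\RV)$ is finite, since $\|\TOF^n(1)\|_\infty\le\prod_{k=0}^{n-1}M_f(\RD^k(\RV))$.

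Next I would compare the three growth rates on the single full-measure set $\RS':=\dis\cap\fin\cap G$, where $G$ is a $\RD$-invariant full-measure set on which $a_n(\RD^j(\RV))/n\to P_F(f)$ for every fixed $j$. Bounded distortion on $\dis$ gives, for every $a\in F_{A,e}^n$ and $\tilde\tau,\tilde\tau'\in[a]_F$, the estimate $|S_nf(\tilde\tau,\RV)-S_nf(\tilde\tau',\RV)|\le\log\bd$; evaluating at $a\xi_e$ versus the supremum over $[a]_F$ yields $\mathfrak{L}_{n,e,F}^\RV(f)\le Z_{n,e,F}^\RV(f)\le \bd\,\mathfrak{L}_{n,e,F}^\RV(f)$, so the two limits agree once one exists. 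The trivial bound $Z_{n,e,F}^\RV(f)\le\bd\,\|\TOF^n(1)\|_\infty=\bd\,e^{a_n(\RV)}$ gives $\limsup_n\tfrac1n\log Z_{n,e,F}^\RV(f)\le P_F(f)$. For the matching lower bound I would use finite primitivity: given any word $a$ contributing to $\TOF^n(1)$, prepend an admissible bridge $\alpha\in\Lambda^F$ from $e$ to $a_0$ and append one from $a_{n-1}$ back to $e$, producing an injection of such words into $F_{A,e}^{\,n+2N_{A,F}+1}$ whose Birkhoff sums change by a bounded amount. This yields
\[
Z_{n+2N_{A,F}+1,\,e,F}^\RV(f)\ \ge\ C_F^{-1}\,e^{a_n(\RD^{N_{A,F}+1}(\RV))},
\]
where $C_F>0$ depends only on $N_{A,F}$, $\|f\|_\infty$ and $\bd$. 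Dividing by $n$ and letting $n\to\infty$ gives $\liminf_n\tfrac1n\log Z_{n,e,F}^\RV(f)\ge P_F(f)$, so both limits equal $P_F(f)$ for every $e\in F$ on $\RS'$, which does not depend on $e$.

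For a normal summable potential I would obtain finiteness directly. Positivity of $\TOF$ gives $\TOF(g)\ge(\inf g)\,\TOF(1)\ge(\inf g)\,\underline{M}_f(\RV)$ pointwise, whence inductively $\inf\TOF^n(1)\ge\prod_{k=0}^{n-1}\underline{M}_f(\RD^k(\RV))$ and therefore $a_n(\RV)\ge\sum_{k=0}^{n-1}\log\underline{M}_f(\RD^k(\RV))$. Since $\underline{M}_f\le M_f$, the normality hypothesis $\int\log\underline{M}_f\,d\RP>-\infty$ makes $\log\underline{M}_f$ integrable, so Birkhoff's theorem gives $P_F(f)=\lim_n\tfrac1n a_n\ge\int\log\underline{M}_f\,d\RP>-\infty$; thus $P_F(f)$ is finite.

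I expect the main obstacle to be the finite-primitivity bridging estimate used for the lower bound in the second paragraph: one must insert bounded-length admissible words to relate the restricted quantities $\mathfrak{L}_{n,e,F}^\RV(f)$ and $Z_{n,e,F}^\RV(f)$ to the global norm $\|\TOF^n(1)\|_\infty$ while keeping both the multiplicative error and the induced shift in $\RV$ bounded, and while ensuring that the single full-measure set $\RS'$ works simultaneously for all $e\in F$. The exact-subadditivity reduction, the distortion comparison, and the integrability check are comparatively routine.
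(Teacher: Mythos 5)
The paper does not prove this proposition at all: it is imported verbatim from Stadlbauer (\cite[Proposition 3.1 and Lemma 3.2]{StaOnrandomtop}), so there is no in-paper argument to compare against. Your skeleton --- Kingman's subadditive ergodic theorem applied to $a_n(\RV)=\log\|\TOF^n(1)\|_\infty$, followed by a bounded-distortion comparison of $Z_{n,e,F}^\RV$ with $\mathfrak{L}_{n,e,F}^\RV$ and a finite-primitivity bridging argument to pin both growth rates to the Kingman limit --- is the standard route and is exactly the mechanism the paper itself deploys in its neighbouring comparison results (Lemma \ref{lemma comparablity} and Lemma \ref{lemma several form of prressure}). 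The subadditivity check, the upper bound $Z_{n,e,F}^\RV(f)\le \bd\,e^{a_n(\RV)}$, and the finiteness argument under normality (via $\inf\TOF^n(1)\ge\prod_{k=0}^{n-1}\underline{M}_f(\RD^k(\RV))$ and Birkhoff) are all correct.

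The one genuine soft spot is the place you yourself flagged: the lower-bound bridging constant. In the word $e\alpha a\beta$ the appended bridge $\beta$ is evaluated along the fibers $\RD^{N_{A,F}+1+n}(\RV),\dots,\RD^{2N_{A,F}+n}(\RV)$, so your ``constant'' $C_F$ is in fact $n$-dependent, and your claim that it is controlled by $\|f\|_\infty$ does not stand for this class of potentials: a summable random locally H\"older $f$ on an infinite alphabet need not be bounded (the geometric potentials $s\zeta$ to which the proposition is later applied are unbounded below). What is needed is that $\min_{w\in\Lambda^F}\underline{S_{N_{A,F}}}f([w]_F,\RD^{n}(\RV))=o(n)$ along $\RP$-a.e.\ orbit; this is precisely what the paper arranges elsewhere via the quantity $\K$ and the invariant set $\cb$ (which in turn require $f$ to be bounded over finite subalphabets, i.e.\ $f\in H^{1}_{\exponent,\RS}$), or what one gets from Birkhoff applied to $\RV\mapsto\min_{w\in\Lambda^F}\underline{S_{N_{A,F}}}f([w]_F,\RV)$ after checking its integrability. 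As written, your lower bound is complete only under such an additional hypothesis on $f$; for a general summable potential the step needs this extra device (or the corresponding argument from the cited source) to be supplied.
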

For $f\in H_{\exponent,\RS}^s(\CF)$ we call $P_F(f)$ the relative topological pressure with respect to $f$.  We set 
\begin{align}\label{Omega pressure}
\pr:=
\bigcap_{k\in\mathbb{Z}}\RD^k\left(
\left\{
\RV\in\RS:
\lim_{n\to\infty}\frac{1}{n}\log \mathfrak{L}_{n,e,F}^\RV(f)=P_F(f),\ e\in F
\right\}
\right).    
\end{align}
If $E=F$ then we will write $P(f)=P_E(f)$

\begin{thm}{\cite{Denkerkiferstadlbauer, StaOnrandomtop, Stacoupling}}\label{thm perron theorem}
    Let $F$ be a non-empty subset of $E$ such that $\CF$ is finitely primitive and
let $f\in H_{\exponent,\RS}^{ns}(\CF)$. Then there exist a random variable $\lambda^*_f:\RS\rightarrow\mathbb{R}$ with $\int \log \lambda_f^*(\RV) d\RP(\RV)=P_F(f)$, $h\in H_{\exponent,\RS}(\CF)$ and a random probability measure $\left\{\widetilde \nu^f_\RV\right\}_{\RV\in\RS}$ such that the following hold:
\begin{itemize}
    \item[(1)] For $\RP$-a.e. $\RV\in\RS$ the function $h_\RV:\CF\rightarrow \mathbb{R}$ is a bounded strictly positive function satisfying $\TOF h_\RV=\lambda^*_f(\RV) h_{\RD(\RV)}$ and $\int h_{\RV} d\widetilde \nu^f_\RV=1$.
    \item[(2)] For $\RP$-a.e. $\RV\in\RS$ we have 
    \begin{align}\label{eq def conformal measure}
    \TOF^*(\widetilde \nu^f_{\RD(\RV)})=\lambda^*_f(\RV)\widetilde \nu^f_{\RV}.
    \end{align}
In particular, for $\RP$-a.e. $\RV\in\RS$ we have $\lambda^*_f(\RV)=\TOF^*(\widetilde \nu^f_{\RD(\RV)})(1)$.
    \item[(3)] The probability measure $\widetilde \rho_f$ given by $h_\RV d\widetilde \nu^f_{\RV}d\RP$ is in $\mathcal{I}_{\RP}(\CF\times \RS)$.
    \item[(4)] $\left\{\widetilde \nu^f_\RV\right\}_{\RV\in\RS}$ is the unique random probability measure with $\TOF^*(\widetilde \nu^f_{\RD(\RV)})=\lambda^*_f(\RV)\widetilde \nu^f_{\RV}$ for $\RP$-a.e. $\RV\in\RS$. 
\end{itemize}
\end{thm}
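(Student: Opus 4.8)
\section*{Proof proposal}

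The plan is to realize $\left\{\widetilde\nu^f_\RV\right\}$ as an eigenmeasure family of the dual Ruelle operators and to produce $h$ as the corresponding eigenfunction, following the fibered Perron--Frobenius scheme of \cite{Denkerkiferstadlbauer, StaOnrandomtop, Stacoupling}. For the eigenmeasure, note that the conformal relation $\TOF^*(\widetilde\nu^f_{\RD(\RV)})=\lambda^*_f(\RV)\widetilde\nu^f_\RV$ presents $\widetilde\nu^f_\RV$ as the \emph{renormalized backward pull} of $\widetilde\nu^f_{\RD(\RV)}$. Since $f$ is summable, for $\RV\in\fin$ the quantity $(\TOF^*\nu)(1)=\int\TOF(1)\,d\nu$ is finite and positive, so the normalized dual map $\nu\mapsto\TOF^*(\nu)/(\TOF^*\nu)(1)$ sends probability measures on $\CF$ to probability measures on $\CF$. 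I would then obtain the family $\left\{\widetilde\nu^f_\RV\right\}$ as a fixed point of this renormalized cocycle of dual operators, using Crauel's relative Prohorov theorem \cite{crauel2002random} for compactness in the space of random measures (as in \cite{StaOnrandomtop, RGDMS}) together with a measurable selection to guarantee that $\RV\mapsto\widetilde\nu^f_\RV$ is a genuine random measure. Setting $\lambda^*_f(\RV):=\TOF^*(\widetilde\nu^f_{\RD(\RV)})(1)$ then yields (2) and its final assertion.

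For the eigenfunction I would work on $\HFspace$ and exploit the bounded distortion encoded in the constant $\bd$ together with the finite primitivity of $\CF$ to obtain uniform upper and lower bounds, as well as a uniform local H\"older bound, for the renormalized iterates $\big(\prod_{k=0}^{n-1}\lambda^*_f(\RD^k\RV)\big)^{-1}\TOF^n(1)$; the normal summability of $f$ (finiteness of $\int\log\underline M_f\,d\RP$) is precisely what forces the lower bound and hence strict positivity of the limit. Extracting $h_\RV$ either by a Ces\`aro average or, as in the coupling method of \cite{Stacoupling}, by contraction of the Birkhoff cone in the Hilbert metric gives a strictly positive bounded locally H\"older solution of $\TOF h_\RV=\lambda^*_f(\RV)h_{\RD(\RV)}$; since $\int h_{\RD(\RV)}\,d\widetilde\nu^f_{\RD(\RV)}=\int h_\RV\,d\widetilde\nu^f_\RV$ is constant along the orbit, the normalization $\int h_\RV\,d\widetilde\nu^f_\RV=1$ is consistent, giving (1). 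The identity $\int\log\lambda^*_f\,d\RP=P_F(f)$ then follows by applying Birkhoff's ergodic theorem to $\log\lambda^*_f$ and comparing with the pressure formula of Proposition \ref{prop definition of the pressure} on the full-measure set $\pr$, using $\TOF^n(1)\asymp\prod_{k=0}^{n-1}\lambda^*_f(\RD^k\RV)$.

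Assertion (3) is a direct computation. For $g\in\CFspace$ the eigenequations for $h$ and $\widetilde\nu^f$, together with $\sigma(e\tau)=\tau$, give
\[
\int g\circ(\sigma\times\RD)\, h_\RV\, d\widetilde\nu^f_\RV\, d\RP
=\int \lambda^*_f(\RV)^{-1}\TOF\big((g\circ(\sigma\times\RD))\, h_\RV\big)\, d\widetilde\nu^f_{\RD(\RV)}\, d\RP
=\int g\, h_{\RD(\RV)}\, d\widetilde\nu^f_{\RD(\RV)}\, d\RP,
\]
and $\RP$-invariance of $\RD$ closes the computation, so $\widetilde\rho_f\in\imsF$. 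For uniqueness (4) I would argue that any second random probability measure satisfying the same eigenequation is forced to carry the same eigenvalue $\lambda^*_f$ and, fiberwise, is mutually absolutely continuous with $\widetilde\nu^f_\RV$; the exponential contraction of the cone (equivalently, fiberwise mixing) then forces the associated Radon--Nikodym cocycle to be $\RP$-a.s.\ constant, whence the two families coincide.

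The main obstacle I anticipate is not any single estimate but the \emph{joint measurability and fiber-consistency} of the construction: the fixed-point and cone arguments are naturally carried out fiber by fiber, and the delicate point is to perform the selection so that $\RV\mapsto\lambda^*_f(\RV)$, $\RV\mapsto\widetilde\nu^f_\RV$ and $(\tau,\RV)\mapsto h_\RV(\tau)$ are jointly measurable and the eigenequations hold simultaneously on a single $\RD$-invariant full-measure set, despite the non-compactness of $\CF$. This is exactly where the summability and normal summability hypotheses, the bounded distortion constant $\bd$, and the finite primitivity of $\CF$ must be combined, and where the cited works \cite{Denkerkiferstadlbauer, StaOnrandomtop, Stacoupling} supply the required measurable-selection and uniform-contraction machinery.
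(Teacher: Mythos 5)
The paper gives no proof of this theorem --- it is imported wholesale from \cite{Denkerkiferstadlbauer, StaOnrandomtop, Stacoupling} --- and your sketch reconstructs exactly the strategy of those references, which the introduction itself describes: the eigenmeasure family as a fixed point of the normalized dual cocycle obtained via Crauel's relative Prohorov theorem (where, in this countable-alphabet setting, the tightness needed to invoke it comes from summability of $f$, as in the paper's own Lemma \ref{lemma tight}), the eigenfunction via Birkhoff-cone contraction as in \cite{Stacoupling}, and the standard computations for the invariance of $h_\RV\,d\widetilde\nu^f_\RV\,d\RP$ and the identity $\int\log\lambda^*_f\,d\RP=P_F(f)$. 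Your verification of (3) and the normalization argument for $h$ are correct, so the proposal matches the approach the paper relies on and I see no substantive gap beyond the measurability and tightness details you already flag as being supplied by the cited works.
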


For $\widetilde \mu\in \imsF$ we denote by 
$h^r_{\widetilde\mu}(\sigma\times\RD)$ the relative entropy as defined in \cite{kifer2006random}. We have the following variational principle.
\begin{thm}{\cite{Denkerkiferstadlbauer, Stacoupling}}
\label{thm variational principle}
    Let $F\subset E$ be a non-empty set and let $f\in H_{\exponent,\RS}^{ns}(\CF)$. We assume that $\CF$ is finitely primitive. Then, we have
\[
P_F(f)=\sup
\left\{
h_{\widetilde\mu}^r(\sigma\times\RD)+\int fd\widetilde\mu
:
\widetilde\mu\in \imsF \text{ with } \int f d\widetilde\mu>-\infty
\right\}.
\]
Moreover, if $\int f \widetilde \rho_f>-\infty$ then $\widetilde \rho_f$ is
     an equilibrium state for the potential $f$, that is,
    $P_F(f)=h_{\widetilde\rho_f}^r(\sigma\times \RD)+\int f d\widetilde\rho_f.$
\end{thm}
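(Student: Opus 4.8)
The plan is to establish the two inequalities separately, obtaining the ``moreover'' statement as a byproduct of the reverse one.

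First I would prove the upper bound $h_{\widetilde\mu}^r(\sigma\times\RD)+\int f\,d\widetilde\mu\leq P_F(f)$ for every $\widetilde\mu\in\imsF$ with $\int f\,d\widetilde\mu>-\infty$. Fix such a $\widetilde\mu$ with disintegration $\{\widetilde\mu_\RV\}$ and consider the countable generating partition $\xi:=\{[e]_F:e\in F\}$. Since $\bigvee_{i=0}^{n-1}\sigma^{-i}\xi$ separates points of $\CF$ as $n\to\infty$, I would invoke the relative Kolmogorov--Sinai theorem to reduce $h_{\widetilde\mu}^r(\sigma\times\RD)$ to the entropy relative to $\xi$, the justification of this reduction for an infinite partition being one of the delicate points. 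For fixed $n$ and $\RP$-a.e.\ $\RV$ I would then combine the fiber entropy of $\bigvee_{i=0}^{n-1}\sigma^{-i}\xi$ with $\int S_n f\,d\widetilde\mu_\RV$ and apply the elementary inequality $\sum_i p_i(a_i-\log p_i)\leq\log\sum_i e^{a_i}$ with $p_\tau=\widetilde\mu_\RV([\tau]_F)$ and $a_\tau=\sup_{[\tau]_F}S_n f(\cdot,\RV)$. Using the locally H\"older estimate $|S_n f(\tau,\RV)-S_n f(\tilde\tau,\RV)|\leq\log\bd$ for $\tau,\tilde\tau\in[\tau]_F$ and $\RV\in\dis$ to control the distortion, integrating over $\RS$, dividing by $n$, and letting $n\to\infty$, I expect the right-hand side to converge to $P_F(f)$ via Proposition \ref{prop definition of the pressure}, after relating the full partition sum to the loop sums $Z_{n,e,F}^\RV(f)$ using finite primitivity of $\CF$.

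For the reverse inequality I would first treat the case $\int f\,d\widetilde\rho_f>-\infty$ using the eigendata of Theorem \ref{thm perron theorem}. The conformal relation $\TOF^*(\widetilde\nu^f_{\RD(\RV)})=\lambda_f^*(\RV)\widetilde\nu^f_\RV$ identifies the fiber Jacobian of $\sigma$ with respect to $\widetilde\rho_f=h_\RV\,d\widetilde\nu^f_\RV\,d\RP$, and a random Rokhlin-type formula then gives $h_{\widetilde\rho_f}^r(\sigma\times\RD)=-\int f\,d\widetilde\rho_f+\int\log\lambda_f^*(\RV)\,d\RP(\RV)$; here the $h_\RV$-terms cancel because $\widetilde\rho_f$ is $(\sigma\times\RD)$-invariant, and $\int\log\lambda_f^*\,d\RP=P_F(f)$ by Theorem \ref{thm perron theorem}. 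Thus $h_{\widetilde\rho_f}^r(\sigma\times\RD)+\int f\,d\widetilde\rho_f=P_F(f)$, which simultaneously furnishes the reverse inequality in this case and proves that $\widetilde\rho_f$ is an equilibrium state. To obtain the reverse inequality in general, I would fall back on finite-alphabet approximation: for each finite $F'\subset F$ with $F'^\infty_A$ finitely primitive, the restricted potential is normal summable with finite integral, so its equilibrium state, embedded into $\imsF$, realizes $P_{F'}(f)$ as $h^r+\int f$ and hence lies below the supremum; combining this with the compact approximation property $P_F(f)=\sup_{F'}P_{F'}(f)$ from Section \ref{section Compact approximation property of the relative topological pressure} yields $P_F(f)\leq\sup\{\cdots\}$.

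I expect the main obstacle to be the upper bound in this countable, non-compact, random setting: making the relative Kolmogorov--Sinai reduction rigorous when the generating partition $\xi$ is infinite and the fiber entropy may be large requires simultaneously controlling the tail of $\xi$ (via summability of $f$ and the hypothesis $\int f\,d\widetilde\mu>-\infty$) and the measurability and integrability of all fiber quantities over $(\RS,\mathcal{F},\RP)$, while the bounded-distortion passage must be carried out on the full-measure set $\dis$ with the uniform constant $\bd$ and then integrated using ergodicity of $\RD$.
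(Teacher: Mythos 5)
The paper offers no proof of this theorem: it is imported verbatim from \cite{Denkerkiferstadlbauer, Stacoupling}, so there is no internal argument to compare against. Your outline is the standard two-sided strategy and, as far as one can tell, matches what the cited references actually do: the upper bound via the generating partition $\xi=\{[e]_F\}$ and the inequality $\sum_i p_i(a_i-\log p_i)\leq\log\sum_i e^{a_i}$, with bounded distortion on $\dis$ and finite primitivity to pass from full partition sums to the loop sums $Z_{n,e,F}^\RV(f)$; and the lower bound via the Rokhlin-type entropy formula for $\widetilde\rho_f=h_\RV\,d\widetilde\nu_\RV^f\,d\RP$ using the conformality (\ref{eq def conformal measure}) and $\int\log\lambda_f^*\,d\RP=P_F(f)$, which simultaneously yields the ``moreover'' clause. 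The structure is sound.

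Two points in your sketch would need genuine repair rather than routine filling-in. First, in the upper bound you cannot simply invoke a relative Kolmogorov--Sinai theorem for the infinite generator $\xi$: the fiber entropies $H_{\widetilde\mu_\RV}(\xi)$ may be infinite, in which case $h^r_{\widetilde\mu}$ is not a priori computed by $\xi$ and the quantity $h^r+\int f\,d\widetilde\mu$ could be of the form $\infty-\infty$ at the level of the partition sums. The correct route (as in Sarig's deterministic treatment and in \cite{Denkerkiferstadlbauer}) is to bound the combined expression $H_{\widetilde\mu_\RV}(\bigvee_{i<n}\sigma^{-i}\xi)+\int S_nf\,d\widetilde\mu_\RV$ directly by $\log A_{n,F}^\RV(f)$ without separating the two terms, using summability of $f$ and the hypothesis $\int f\,d\widetilde\mu>-\infty$ to ensure the relevant series converge; this is more than a ``delicate point,'' it is where the proof lives. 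Second, your fallback to finite-subalphabet equilibrium states for the reverse inequality when $\int f\,d\widetilde\rho_f=-\infty$ silently assumes that each restriction $f_{F'}$ is \emph{normal} summable, so that Theorem \ref{thm perron theorem} applies to it. Summability of $f_{F'}$ does follow from that of $f$ (each term of $\TOF(1)$ is dominated by $M_f(\RV)$), but the lower bound $\int\log\underline{M}_{f_{F'}}\,d\RP>-\infty$ does not follow from $f\in H^{ns}_{\exponent,\RS}(\CF)$ alone; the paper only guarantees this for potentials bounded over finite subalphabets ($H^{1}_{\exponent,\RS}$), and its compact approximation result, Proposition \ref{prop compact apploximation of the pressure}, is likewise stated only for that class. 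As written, your general-case reverse inequality therefore has a gap for potentials in $H^{ns}_{\exponent,\RS}(\CF)\setminus H^{1}_{\exponent,\RS}(\CF)$, and you would need either to impose that extra hypothesis or to replace the approximation step by the argument of the cited references.
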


\subsection{Several forms of the relative topological pressure}\label{section Several forms of the relative topological pressure}

Let $F$ be a subset of $E$ such that $\CF$ is finitely primitive. Then there exist $N_{A,F}\geq 1$ and $\Lambda^F\subset F^{N_{A,F}}$ such that $\Lambda^F$ is a finite set which witnesses the finitely primitivity of matrix $A$.
We write $\Lambda^F:=\{w^i\in F^{N_{A,F}}_A:1\leq i\leq \#\Lambda^F\}$, where $\#C$ denote the cardinality of the set $C$. We set 
$\Lambda^F_a:=\{e\in F:$ there exist $1\leq i\leq\#\Lambda^F$ and $0\leq k\leq N_{A,F}-1$ such that $e=w^i_k$\}.
If $F=E$ then we write $N_{A}:=N_{A,E}$, $\Lambda:=\Lambda^F$ and $\Lambda_a:=\Lambda^F_a$
For a potential $f\in H_{\exponent,\RS}(\CF)$, $\RV\in\RS$, $e\in F$ and $n\in\mathbb{N}$ we define 
\begin{align*}
&A_{n,F}^\RV(f):=
\sum_{\tau\in F_A^n}
\exp
\left(
\sup\{S_nf(\widetilde \tau,\RV):\widetilde \tau\in[\tau]_F\}
\right)
\text{ and }
\\&
L_{n,e,F}^\RV(f):=
\sum_{\tau\in F_A^n,\ A_{\tau_{n-1},e}=1}
\exp
\left(
S_nf(\tau\xi_e,\RV)
\right) 
\end{align*}
For simplicity, if $F=E$ then we will write $A_{n}^\RV:=A_{n,E}^\RV$ and $L_{n,e}^\RV:=L_{n,e,E}^\RV$.
For all $n\in\mathbb{Z}$ and $\RV\in\RS$ we set
\[
\RV_n:=\RD^{n}(\RV).
\]
For $F\subset E$, $B\subset \CF$, $n\in\mathbb{N}$, $\RV\in\RS$ and $f\in H_{\exponent,\RS}(\CF)$ we put
$\overline{S_n}f(B,\RV):=\sup\{S_nf(\tau,\RV):\tau\in B\}$ and $\underline{S_n}f(B,\RV):=\inf\{S_nf(\tau,\RV):\tau\in B\}$. For simplicity we write
$\overline{f}(B,\RV):=\overline{S_1}f(B,\RV)$ and $\underline{f}(B,\RV):=\underline{S_1}f(B,\RV)$.

\begin{lemma}\label{lemma comparablity}
Let $F$ be a subset of $E$ such that $\CF$ is finitely primitive and let $f\in H_{\exponent,\RS}(\CF)$.
Then, for all $\RV\in\dis$, $e\in F$ and $n\in\mathbb{N}$ we have
    $\mathfrak{L}_{N_{A,F}+1+n,e,F}^{\RV}(f)
    \geq
R    L_{n,e,F}^{\RV_{N_{A,F}+1}}(f)$
and 
 $   L_{N_{A,F}+n,e,F}^\RV(f)
\geq 
R_n A_{n,F}^\RV(f)$, where\\ 
$
R:=\min_{w\in\Lambda^F}
    e^{
    \underline{S_{N_{A,F}+1}}f([ew]_F,\RV)}$
    and  $R_n:=      \bd^{-1}
           \min_{w\in\Lambda^F}
           e^{
    \underline {S_{N_{A,F}}}f([w]_F,\RV_n)}.$    

\end{lemma}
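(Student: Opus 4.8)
The plan is to write each of $\mathfrak{L}$, $L$ and $A$ as an explicit sum over admissible words and to compare these sums through injective ``word--surgery'' maps supplied by the finite primitivity of $\CF$, controlling the Birkhoff sums by the cocycle identity $S_{k+l}f(\tau,\RV)=S_kf(\tau,\RV)+S_lf((\sigma\times\RD)^k(\tau,\RV))$ together with the infimum/supremum quantities hidden in $R$, $R_n$ and $\bd$. As a preliminary step I would unwind the Ruelle operator to obtain, for $m\in\mathbb{N}$, the identity $\mathfrak{L}_{m,e,F}^{\RV}(f)=\sum_{a\in F_{A,e}^m}\exp\!\big(S_mf(a\xi_e,\RV)\big)$, since $1_{[e]_F}(a\xi_e)=1$ forces $a_0=e$ and $\xi_{e,0}=e$ turns the admissibility constraint into $A_{a_{m-1},e}=1$. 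I set $N:=N_{A,F}$ throughout.

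For the first inequality I would fix, for each $c\in F$, a connecting word $w(c)\in\Lambda^F$ with $ew(c)c$ admissible (finite primitivity for the pair $(e,c)$), and define $\tau\mapsto ew(\tau_0)\tau$ from $\{\tau\in F_A^n:A_{\tau_{n-1},e}=1\}$ into $F_{A,e}^{N+1+n}$. This map is well defined (the concatenation is admissible, begins with $e$, and its last symbol $\tau_{n-1}$ satisfies $A_{\tau_{n-1},e}=1$) and injective (deleting the first $N+1$ symbols recovers $\tau$). Splitting the Birkhoff sum after the first $N+1$ coordinates gives $S_{N+1+n}f(ew(\tau_0)\tau\xi_e,\RV)=S_{N+1}f(ew(\tau_0)\tau\xi_e,\RV)+S_nf(\tau\xi_e,\RV_{N+1})$, because $\sigma^{N+1}(ew(\tau_0)\tau\xi_e)=\tau\xi_e$ and $\RD^{N+1}(\RV)=\RV_{N+1}$. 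Since $ew(\tau_0)\tau\xi_e\in[ew(\tau_0)]_F$, the first summand is at least $\underline{S_{N+1}}f([ew(\tau_0)]_F,\RV)\geq\log R$, so every term dominates $R\exp(S_nf(\tau\xi_e,\RV_{N+1}))$; summing over the injective image and discarding the remaining positive terms of $\mathfrak{L}$ yields $\mathfrak{L}_{N+1+n,e,F}^{\RV}(f)\geq R\,L_{n,e,F}^{\RV_{N+1}}(f)$. This half needs no distortion estimate, as the tail block reproduces the summand of $L_{n,e,F}^{\RV_{N+1}}$ verbatim.

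For the second inequality I would instead append the connecting word: for each $c\in F$ fix $w(c)\in\Lambda^F$ with $cw(c)e$ admissible, and map $\tau\in F_A^n$ to $\tau w(\tau_{n-1})\in\{\rho\in F_A^{N+n}:A_{\rho_{N+n-1},e}=1\}$, again injectively. Splitting after the first $n$ coordinates gives $S_{N+n}f(\tau w(\tau_{n-1})\xi_e,\RV)=S_nf(\tau w(\tau_{n-1})\xi_e,\RV)+S_Nf(w(\tau_{n-1})\xi_e,\RV_n)$, and the second summand is bounded below by $\underline{S_N}f([w(\tau_{n-1})]_F,\RV_n)$, contributing the factor $\min_{w\in\Lambda^F}e^{\underline{S_N}f([w]_F,\RV_n)}$. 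The crux is the first summand: $L$ evaluates $S_nf$ at the single point $\tau w(\tau_{n-1})\xi_e\in[\tau]_F$, whereas $A_{n,F}^{\RV}(f)$ uses the supremum $\overline{S_n}f([\tau]_F,\RV)$, so I must bound a point value from below by the cylinder supremum. This is a bounded-distortion argument: for $\tilde\tau\in[\tau]_F$ the shifts $\sigma^l(\tau w(\tau_{n-1})\xi_e)$ and $\sigma^l\tilde\tau$ agree in their first $n-l$ coordinates, so local H\"olderness together with the choice $\RV\in\dis$ (which forces $v_\exponent(f_{\RD^l(\RV)})\leq v_\exponent(f)$ for every $l$) bounds the oscillation of $S_nf$ on $[\tau]_F$ by $v_\exponent(f)\sum_{j\geq1}e^{-j\exponent}\leq\log\bd$. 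Hence $S_nf(\tau w(\tau_{n-1})\xi_e,\RV)\geq\overline{S_n}f([\tau]_F,\RV)-\log\bd$, and combining the two summands and summing over the injective image gives $L_{N+n,e,F}^{\RV}(f)\geq\bd^{-1}\big(\min_{w\in\Lambda^F}e^{\underline{S_N}f([w]_F,\RV_n)}\big)A_{n,F}^{\RV}(f)=R_nA_{n,F}^{\RV}(f)$.

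I expect the distortion step in the second inequality to be the only genuine obstacle: unlike the first inequality it runs \emph{against} the supremum defining $A_{n,F}^{\RV}$, so one must check carefully that the relevant shifted points remain in a common cylinder of length $n-l\geq1$ for each $l\le n-1$, that the resulting geometric series is uniformly summable, and that the hypothesis $\RV\in\dis$ is exactly what upgrades the fibrewise H\"older seminorms $v_\exponent(f_{\RD^l(\RV)})$ to the uniform constant $v_\exponent(f)$ appearing in $\bd$. The remaining ingredients --- admissibility and injectivity of the two surgery maps and the cocycle factorisations --- are routine once the connecting words from $\Lambda^F$ are fixed as functions of the appropriate boundary symbol.
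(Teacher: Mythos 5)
Your argument is correct and follows essentially the same route as the paper: insert a connecting word from $\Lambda^F$ (prepended after $e$ for the first inequality, appended before $\xi_e$ for the second), split the Birkhoff sum by the cocycle identity, absorb the connecting block into $R$ or $R_n$, and use the bounded-oscillation estimate $|S_nf(\tilde\tau,\RV)-S_nf(\tilde\tau',\RV)|\leq\log\bd$ on $[\tau]_F$ (valid for $\RV\in\dis$) to pass from the point evaluation in $L$ to the cylinder supremum in $A_{n,F}^\RV$. The only cosmetic difference is that you fix one connecting word per boundary symbol to get an injective surgery map, whereas the paper sums over all admissible $w\in\Lambda^F$; both give the same lower bound.
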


\begin{proof}
We fix $\RV\in \dis$ and $e\in  F$. Then,  for all $n\in\mathbb{N}$ we have
\begin{align*}
&    \mathfrak{L}_{N_{A,F}+1+n,e,F}^{\RV}(f)
    \geq 
    \sum_{\substack{\widetilde\tau\in F_A^n,\\A_{\tau_{n-1},e}=1}}
    \sum_{\substack{w\in \Lambda^F,\\ew\widetilde\tau\in F^*_A}}
    \exp
    \left(S_{N_{A,F}+1+n}f(ew\widetilde\tau\xi_e),\RV)
    \right)
    \geq
    RL_{n,e,F}^{\RV_{N_{A,F}+1}}(f)
\end{align*}
       Note that for all $n\in\mathbb{N}$, $\tau\in F_A^n$ and $\tilde \tau,\tilde\tau'\in[\tau]_F$ we have
        $   |S_nf(\tilde \tau,\RV)-S_nf(\tilde \tau',\RV)|
           \leq 
           \bd.$
       Therefore, for all $n\in\mathbb{N}$ we obtain
       \begin{align*}
           &L_{N_{A,F}+n,e,F}^\RV(f)
           \geq
           \sum_{\tau\in F_A^n}
           \sum_{\substack{w\in \Lambda^F,\\\tau w\xi_e\in F^*_A}}
           \exp(S_{N_{A,F}+n}f(\tau w\xi_e,\RV))
           \geq
 R_n    A_{n,F}^\RV(f)
     \end{align*}
     \end{proof}

\begin{definition}
    A potential $f\in H_{\exponent,\RS}(\CF)$ is said to be bounded over finite subalphabets if for all finite set $\tilde F\subset F$ we have 
 $   \text{ess}\sup\left\{\left\|\left(f|_{ \tilde F_A^{\infty}\times\RS}\right)_\RV\right\|_{\infty}:\RV\in\RS\right\}<\infty.$
\end{definition}
We will denote by $H_{\exponent,\RS}^{1}(\CF)$ the set of all functions $f\in H_{\exponent,\RS}(\CF)$ whose are bounded over finite subalphabets and put $H_{\exponent,\RS}^{1,s}(\CF)=H_{\exponent,\RS}^{1}(\CF)\cap H_{\exponent,\RS}^{s}(\CF)$. For a potential $f\in H_{\exponent,\RS}(\CF)$ and $\tilde F\subset F$ we set $f_{\tilde F}:=f|_{\tilde F_A^{\infty}\times\RS}$.
For $f\in H_{\exponent,\RS}(\CF)$ 
we define $\K:= \text{ess}\sup\{\|(f_{\Lambda^F_a})_\RV\|_{\infty}:\RV\in\RS\}$ and
\[
\cb:=\bigcap_{k\in\mathbb{Z}}\RD^{k}\left(\left\{\RV\in\RS:\|(f_{\Lambda^F_a})_{\RV}\|_{\infty}\leq\K\right\}\right)
\]
Note that for all $f\in H_{\exponent,\RS}^{1}(\CF)$
we have $\RP(\cb)=1$ 
and for $\RV\in\dis\cap\cb$ we have 
$
\underline{M}_f(\RV)
\geq
\bd^{-1}
e^{-\K}
.$
Therefore, 
we obtain $H_{\exponent,\RS}^{1,s}(\CF)\subset H_{\exponent,\RS}^{ns}(\CF)$.

\begin{lemma}\label{lemma several form of prressure}
 Let $F$ be a subset of $E$ such that $\CF$ is finitely primitive  and let $f\in H_{\exponent,\RS}^{s}(\CF)$. Then for all $e\in F$,
$\RV\in \dis\cap \cb$ and $n\in\mathbb{N}$ we have 
\begin{align*}
&\mathfrak{L}_{n,e,F}^\RV(f)
\leq Z_{n,e,F}^\RV(f)
\leq \bd L_{n,e,F}^\RV(f)
\leq \bd A_{n,F}^\RV(f)
\\&\leq 
\bd^{3}
e^{N_{A,F}\K}
  L_{N_{A,F}+n,e,F}^\RV(f)
\leq 
\bd^{3}
e^{N_{A,F}\K}
R^{-1}
\mathfrak{L}_{2N_{A,F}+1+n,e,F}^{\RV_{-(N_{A,F}+1)}}(f).    
\end{align*}
In particular, for all $\RV\in \pr\cap\dis\cap\cb$ and $e\in F$ we have 
\begin{align*}
   & P_F(f)
   =\lim_{n\to\infty}\frac{1}{n}\log \mathfrak{L}_{n,e,F}^\RV(f)
   = 
        \lim_{n\to\infty}\frac{1}{n}\log Z_{n,e,F}^\RV(f)
    \\&    =\lim_{n\to\infty}\frac{1}{n} \log L_{n,e,F}^\RV(f)
    =\lim_{n\to\infty}\frac{1}{n} \log A_{n,F}^\RV(f).
\end{align*}
\end{lemma}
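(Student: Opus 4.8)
The plan is to establish the five inequalities of the chain one at a time and then read off the pressure identities by squeezing. Throughout I fix $e\in F$ and $\RV\in\dis\cap\cb$. Two facts will be used repeatedly: the bounded distortion estimate recorded in the proof of Lemma~\ref{lemma comparablity}, namely that for $\tau\in F_A^n$ and $\tilde\tau,\tilde\tau'\in[\tau]_F$ one has $\exp\bigl(|S_nf(\tilde\tau,\RV)-S_nf(\tilde\tau',\RV)|\bigr)\le\bd$ (valid since $\RV\in\dis$), and the inclusions of index sets $F_{A,e}^n\subset\{\tau\in F_A^n:A_{\tau_{n-1},e}=1\}\subset F_A^n$. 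I would first rewrite $\mathfrak{L}_{n,e,F}^\RV(f)=\TOF^n(1_{[e]_F})(\xi_e)=\sum_{\tau\in F_{A,e}^n}\exp\bigl(S_nf(\tau\xi_e,\RV)\bigr)$ using the explicit formula for $\TOF^n$ and the fact that $1_{[e]_F}(\tau\xi_e)=1$ forces $\tau_0=e$.

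The first three inequalities are then elementary. For $\mathfrak{L}_{n,e,F}^\RV(f)\le Z_{n,e,F}^\RV(f)$ both sums run over $F_{A,e}^n$, and since $\tau\xi_e\in[\tau]_F$ we have $S_nf(\tau\xi_e,\RV)\le\overline{S_n}f([\tau]_F,\RV)$ term by term. For $Z_{n,e,F}^\RV(f)\le\bd L_{n,e,F}^\RV(f)$, distortion gives $\exp\bigl(\overline{S_n}f([\tau]_F,\RV)\bigr)\le\bd\exp\bigl(S_nf(\tau\xi_e,\RV)\bigr)$, and summing over $F_{A,e}^n\subset\{\tau:A_{\tau_{n-1},e}=1\}$ (dropping the constraint $\tau_0=e$) yields the bound. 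For $L_{n,e,F}^\RV(f)\le A_{n,F}^\RV(f)$ one enlarges the index set to all of $F_A^n$ and again uses $S_nf(\tau\xi_e,\RV)\le\overline{S_n}f([\tau]_F,\RV)$.

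The fourth and fifth inequalities come from Lemma~\ref{lemma comparablity}. Its second inequality gives $A_{n,F}^\RV(f)\le R_n^{-1}L_{N_{A,F}+n,e,F}^\RV(f)$, and the crux is to bound $R_n^{-1}=\bd\max_{w\in\Lambda^F}e^{-\underline{S_{N_{A,F}}}f([w]_F,\RV_n)}$ by $\bd^2 e^{N_{A,F}\K}$; multiplying by $\bd$ then gives the stated inequality. To do this I would, for each $w\in\Lambda^F$, use finite primitivity to build an infinite admissible sequence $\tau^\circ\in[w]_F$ all of whose symbols lie in $\Lambda^F_a$ (concatenating words of $\Lambda^F$ through connecting words); since $\RV\in\cb$, every shift $\sigma^l\tau^\circ$ ($0\le l\le N_{A,F}-1$) lies in $(\Lambda^F_a)^\infty_A$ and satisfies $|f_{\RV_{n+l}}(\sigma^l\tau^\circ)|\le\K$, so $S_{N_{A,F}}f(\tau^\circ,\RV_n)\ge -N_{A,F}\K$. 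Bounded distortion on $[w]_F$ then yields $\underline{S_{N_{A,F}}}f([w]_F,\RV_n)\ge -N_{A,F}\K-\log\bd$, which is exactly the required bound on $R_n^{-1}$. The fifth inequality is a direct application of the first inequality of Lemma~\ref{lemma comparablity} with $\RV$ replaced by $\RV_{-(N_{A,F}+1)}$ and $n$ by $N_{A,F}+n$; since $(\RV_{-(N_{A,F}+1)})_{N_{A,F}+1}=\RV$ and $N_{A,F}+1+(N_{A,F}+n)=2N_{A,F}+1+n$, this gives $L_{N_{A,F}+n,e,F}^\RV(f)\le R^{-1}\mathfrak{L}_{2N_{A,F}+1+n,e,F}^{\RV_{-(N_{A,F}+1)}}(f)$, with $R$ the constant of Lemma~\ref{lemma comparablity} taken at base point $\RV_{-(N_{A,F}+1)}$.

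For the pressure identities I restrict to $\RV\in\pr\cap\dis\cap\cb$. By the definition of $\pr$ (an intersection over all $\RD^k$), both $\RV$ and $\RV_{-(N_{A,F}+1)}=\RD^{-(N_{A,F}+1)}(\RV)$ lie in the set where the $\mathfrak{L}$-limit equals $P_F(f)$, so $\tfrac1n\log\mathfrak{L}_{n,e,F}^\RV(f)\to P_F(f)$ and $\tfrac1n\log\mathfrak{L}_{2N_{A,F}+1+n,e,F}^{\RV_{-(N_{A,F}+1)}}(f)\to P_F(f)$, the latter because $\tfrac{2N_{A,F}+1+n}{n}\to1$. Applying $\tfrac1n\log(\cdot)$ across the whole chain, every multiplicative constant ($\bd$, $\bd^3 e^{N_{A,F}\K}$, $R^{-1}$) contributes $o(1)$, so both ends converge to $P_F(f)$; a squeezing argument forces $\tfrac1n\log Z_{n,e,F}^\RV(f)$, $\tfrac1n\log L_{n,e,F}^\RV(f)$ and $\tfrac1n\log A_{n,F}^\RV(f)$ to the same limit. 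I expect the uniform bound on $R_n^{-1}$—in particular the construction of the sequence $\tau^\circ$ and the combination of the $\K$-estimate with distortion—to be the only genuinely nontrivial step; everything else is bookkeeping with index sets, distortion, and the $\RD$-invariance of $\pr$.
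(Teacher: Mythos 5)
Your proof is correct and follows essentially the same route as the paper: the first three inequalities by direct comparison of index sets plus the distortion bound, the last two via Lemma \ref{lemma comparablity} together with the uniform lower bound on $R_n$ coming from $\RV\in\cb$, and the pressure identities by squeezing using the $\RD$-invariance of $\pr$. Your treatment of the bound on $R_n^{-1}$ (constructing the admissible point $\tau^\circ$ in $[w]_F$ with symbols in $\Lambda^F_a$ and then correcting by the distortion constant) is in fact slightly more careful than the paper's terse estimate, and it lands exactly on the stated constant $\bd^3 e^{N_{A,F}\K}$.
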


\begin{proof}
Let $f\in H_{\exponent,\RS}^{s}(\CF)$. We fix $e\in F$.
    By definitions of $Z_{n,e,F}^\RV(f)$, $L_{n,e,F}^\RV(f)$, $\mathfrak{L}_{n,e,F}^\RV(f)$ 
and $A_{n,F}^\RV(f)$, for all $\RV\in \dis$ and $n\in\mathbb{N}$ we have
 $   \mathfrak{L}_{n,e,F}^\RV(f)\leq Z_{n,e,F}^\RV(f)\leq \bd L_{n,e,F}^\RV(f)\leq \bd A_{n,F}^\RV(f).$
By Lemma \ref{lemma comparablity}, for all $\RV\in\dis$ and $n\in\mathbb{N}$ we have 
   $ \mathfrak{L}_{N_{A,F}+1+N_{A,F}+n,e,F}^{\RV_{-(N_{A,F}+1)}}(f)\geq
RL_{N_{A,F}+n,e,F}^\RV(f)$
 and
 $ L_{N_{A,F}+n,e,F}^\RV(f)
\geq 
R_n
A_{n,F}^\RV(f).
$
On the other hand for all $\RV\in \dis\cap\cb$ and $n\in\mathbb{N}$ we have
 \[R_n\geq
    \bd^{-1}\exp\left({-\sum_{j=0}^{N_{A,F}-1}\|(f_{\Lambda^F_a})_{\RD^{j}(\RV_n)}\|_\infty}\right)
\geq \bd^{-1}e^{-N_{A,F}\K}.\]
Thus, the proof is complete.
\end{proof}

Let $F\subset E$ and let $f\in \h^{s}(\CF)$. 
For $n\in\mathbb{N}$, $\RV,\RV'\in\RS$ and a sequence $\{\widetilde{m}_{\RV_n}\}_{n\in\mathbb{N}\cup\{0\}}$ of Borel probability  measures on $\CF$ we define
\begin{align}\label{eq abstruct eigenvalue}
P_{f,\widetilde m_{\RV_n}}(\RV'):=\log\int \mathcal{L}_{f,F,\RV'}(1) d\widetilde{m}_{\RV_n}
\text{ and }
P_{f,\widetilde m}^n(\RV):=\sum_{i=0}^{n-1}P_{f,\widetilde m_{\RV_{i+1}}}(\RV_i).
\end{align}
Note that, by the definitions of $\underline{M}_f(\RV')$ and $M_f(\RV')$ ($\RV'\in\RS$), for all $n\in\mathbb{N}\cup\{0\}$ and $\RV'\in\RS$ we have $\underline{M}_f(\RV')\leq e^{P_{f,\widetilde m_{\RV_n}}(\RV')}\leq M_f(\RV')$.

\begin{lemma}\label{lemma gibbs}
Let $F$ be a subset of $E$ such that $\CF$ is finitely primitive and let $f\in H_{\exponent,\RS}^{s}(\CF)$.
We assume that for $\RV\in\dis\cap\cb$ a sequence $\{\widetilde{m}_{\RV_{n}}\}_{n\in\mathbb{N}\cup\{0\}}$ of Borel probability  measures on $\CF$ satisfies the following condition: For all $n\in\mathbb{N}$ we have $\mathcal{L}^*_{f,F,\RV_{n-1}}(\widetilde m_{\RV_n})=e^{P_{f,\widetilde m_{n}}(\RV_{n-1})}\widetilde m_{\RV_{n-1}}$. Then for all $n\in\mathbb{N}$, $\tau\in F_A^{n}$ and $\tilde \tau\in[\tau]$ we have 
\[
\frac{1}{\bd e^{2N_{A,F}\K}N_{A,F}\prod_{i=0}^{2N_{A,F}-1}M_f(\RV_{n+i})
}\leq \frac{\widetilde m_\RV([\tau])}{\exp(S_nf(\tilde\tau,\RV)-P^n_{f,\widetilde m}(\RV))}\leq \bd.
\]
\end{lemma}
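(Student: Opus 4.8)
The plan is to convert the hypothesis into an $n$-step duality identity for $\widetilde m_{\RV}$ and then read off both inequalities: the upper one directly from bounded distortion, the lower one with an extra input from finite primitivity. Iterating the hypothesis is the first step. Testing $\mathcal{L}^*_{f,F,\RV_{n-1}}(\widetilde m_{\RV_n})=e^{P_{f,\widetilde m_{\RV_n}}(\RV_{n-1})}\widetilde m_{\RV_{n-1}}$ against $g\in C^b(\CF)$ gives $\int \mathcal{L}_{f,F,\RV_{n-1}}(g)\,d\widetilde m_{\RV_n}=e^{P_{f,\widetilde m_{\RV_n}}(\RV_{n-1})}\int g\,d\widetilde m_{\RV_{n-1}}$, and applying this successively along the orbit $\RV,\RV_1,\dots,\RV_n$ yields, for every $g\in C^b(\CF)$,
\[
\int g\,d\widetilde m_{\RV}=e^{-P^n_{f,\widetilde m}(\RV)}\int \TOF^n(g)\,d\widetilde m_{\RV_n}.
\]
Taking $g=1_{[\tau]_F}$ (bounded continuous, since cylinders are clopen) and using the explicit formula for $\TOF^n$, only the word $\tau$ contributes, so $\TOF^n(1_{[\tau]_F})(\omega)=e^{S_nf(\tau\omega,\RV)}$ if $A_{\tau_{n-1},\omega_0}=1$ and $0$ otherwise; hence
\[
\widetilde m_{\RV}([\tau]_F)=e^{-P^n_{f,\widetilde m}(\RV)}\int_{\{\omega:\,A_{\tau_{n-1},\omega_0}=1\}}e^{S_nf(\tau\omega,\RV)}\,d\widetilde m_{\RV_n}(\omega).
\]
For the upper bound, the distortion estimate from the proof of Lemma~\ref{lemma comparablity} gives $e^{S_nf(\tau\omega,\RV)}\le \bd\,e^{S_nf(\tilde\tau,\RV)}$ for $\tilde\tau\in[\tau]_F$; bounding the continuation set by the full mass $1$ yields $\widetilde m_{\RV}([\tau]_F)\le \bd\,e^{S_nf(\tilde\tau,\RV)-P^n_{f,\widetilde m}(\RV)}$, which is the right-hand inequality.

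For the lower bound I would not estimate $\widetilde m_{\RV}([\tau]_F)$ directly but pass to a sub-cylinder carrying controllable mass. Write $e:=\tau_{n-1}$. Using finite primitivity of $\CF$, choose a connecting word $u$ of length $2N_{A,F}$ with all symbols in $\Lambda^F_a$ such that $\tau u$ is admissible and $u$ extends to a point $u\zeta^\ast\in(\Lambda^F_a)^\infty_A$; then $[\tau u]_F\subseteq[\tau]_F$. Applying the $(n+2N_{A,F})$-step form of the identity above to $1_{[\tau u]_F}$, and using $\widetilde m_{\RV}([\tau]_F)\ge\widetilde m_{\RV}([\tau u]_F)$, I would compare via bounded distortion on the long cylinder $[\tau u]_F$ the Birkhoff sum $S_{n+2N_{A,F}}f$ at any admissible $\tau u\eta$ with its value at $\tau u\zeta^\ast$. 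Splitting $S_{n+2N_{A,F}}f(\tau u\zeta^\ast,\RV)=S_nf(\tau u\zeta^\ast,\RV)+S_{2N_{A,F}}f(u\zeta^\ast,\RV_n)$ and using that $u\zeta^\ast\in(\Lambda^F_a)^\infty_A$ while each $\RV_{n+l}\in\cb$ gives $S_{2N_{A,F}}f(u\zeta^\ast,\RV_n)\ge-2N_{A,F}\K$, so the tail over the two $\Lambda^F_a$-blocks costs only the factor $e^{-2N_{A,F}\K}$; the remaining part $S_nf(\tau u\zeta^\ast,\RV)$ agrees with $S_nf(\tilde\tau,\RV)$ up to the distortion constant, since $\tau u\zeta^\ast\in[\tau]_F$ and $S_nf$ oscillates by at most $\bd$ on $[\tau]_F$, which is how the inequality passes to every $\tilde\tau\in[\tau]_F$.

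It remains to bound the surviving continuation mass $\widetilde m_{\RV_{n+2N_{A,F}}}(\{\eta:A_{u_{2N_{A,F}-1},\eta_0}=1\})$ from below, and this is where finite primitivity is used decisively: as $u$ ranges over the finitely many admissible connecting blocks constructed above, the corresponding continuation sets cover $\CF$, so by the pigeonhole principle one may select $u$ whose continuation set has $\widetilde m_{\RV_{n+2N_{A,F}}}$-measure at least $1/N_{A,F}$. Finally I would split $P^{n+2N_{A,F}}_{f,\widetilde m}(\RV)=P^n_{f,\widetilde m}(\RV)+P^{2N_{A,F}}_{f,\widetilde m}(\RV_n)$ and bound the second factor using the one-step estimate $e^{P_{f,\widetilde m_{\RV_{n+i+1}}}(\RV_{n+i})}=\int\mathcal{L}_{f,F,\RV_{n+i}}(1)\,d\widetilde m_{\RV_{n+i+1}}\le M_f(\RV_{n+i})$ noted after \eqref{eq abstruct eigenvalue}, giving $e^{-P^{2N_{A,F}}_{f,\widetilde m}(\RV_n)}\ge\prod_{i=0}^{2N_{A,F}-1}M_f(\RV_{n+i})^{-1}$. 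Collecting the factor $\bd^{-1}$ from distortion, $e^{-2N_{A,F}\K}$ from the two $\Lambda^F_a$-blocks, $N_{A,F}^{-1}$ from the pigeonhole step, $\prod_{i=0}^{2N_{A,F}-1}M_f(\RV_{n+i})^{-1}$ from the eigenvalue bound, and the common factor $e^{S_nf(\tilde\tau,\RV)-P^n_{f,\widetilde m}(\RV)}$, reproduces the left-hand inequality.

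I expect the main obstacle to be the lower bound on the Birkhoff sum over the connecting cylinder: because $f$ is only summable and need not be bounded on all of $\CF$, the quantity $S_{2N_{A,F}}f(u\eta,\RV_n)$ cannot be controlled for a general tail $\eta$. The resolution is to absorb the unbounded tail into the reference point $u\zeta^\ast\in(\Lambda^F_a)^\infty_A$ by bounded distortion on $[\tau u]_F$, so that only the $\cb$-uniform bound $\K$ (valid on finite-subalphabet cylinders) and the summability bound $M_f$ supplied by the conformality hypothesis enter. Verifying that an extendable connector of length $2N_{A,F}$ exists and that the associated continuation sets cover $\CF$ are the remaining combinatorial points, both consequences of finite primitivity.
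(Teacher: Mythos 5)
Your proposal is correct and follows essentially the same route as the paper: iterate the eigenmeasure identity to get $\widetilde m_\RV([\tau])=e^{-P^n_{f,\widetilde m}(\RV)}\int \TOF^n(1_{[\tau]})\,d\widetilde m_{\RV_n}$, obtain the upper bound from bounded distortion plus total mass, and obtain the lower bound by passing to a connecting block $a_{\tau,w}w$ of length $2N_{A,F}$ built from $\Lambda^F$, choosing $w$ by pigeonhole so that its continuation set has definite $\widetilde m_{\RV_{n+2N_{A,F}}}$-mass, and controlling the Birkhoff sum over the block by $\K$ (via a reference tail in $(\Lambda^F_a)_A^\infty$ plus distortion) and the eigenvalues by $M_f$. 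Your telescoped use of the $(n+2N_{A,F})$-step identity on $[\tau u]_F$ is just a repackaging of the paper's two-stage estimate, and you even make explicit the distortion step (absorbing the arbitrary tail into a $(\Lambda^F_a)_A^\infty$ reference point) that the paper leaves implicit.
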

\begin{proof}
     We fix $n\in\mathbb{N}$, $\tau\in E_A^{n}$ and $\tilde \tau\in[\tau]$. By the assumption, we have
    \begin{align*}
        &\widetilde m_\RV([\tau])
        =e^{- P^n_{f,\widetilde m}(\RV)}
        \int 
        \mathcal{L}_{f,F,\RV}^n(1_{[\tau]})
        d\widetilde m_{\RV_{n}}
        \leq \bd e^{S_nf(\tilde\tau,\RV)- P^n_{f,\widetilde m}(\RV)}  
\text{ and }
   \\& \widetilde m_{\RV}([\tau])
         \geq \bd^{-1} e^{S_nf(\tilde \tau,\RV)- P^n_{f,\widetilde m}(\RV)}
        \widetilde m_{\RV_n}({\{\rho\in\CF:A_{\tau_{n-1},\rho_0}=1\}})
\end{align*}
Note that, by the definition of $\Lambda^F\subset F_A^{N_{A,F}}$, we have 
$
\bigcup_{w\in\Lambda^F} \{\rho\in\CF:A_{w_{N_{A,F}-1},\rho_0}=1\}=\CF.
$
Thus, there exists $w\in\Lambda^F$ such that $\widetilde m_{\RV_{n+2N_{A,F}}}(\{\rho\in\CF:A_{w_{N_{A,F}-1},\rho_0}=1\})\geq 1/N_{A,F}$.
Furthermore, there exists $a_{\tau,w}\in \Lambda^F$ such that $\tau a_{\tau,w}w\in F_A^*$.
Therefore, by the assumption in this lemma, we obtain
\begin{align*}
  &  \widetilde m_{\RV_n}({\{\rho\in\CF:A_{\tau_{n-1},\rho_0}=1\}})
\geq 
\widetilde m_{\RV_n}([a_{\tau,w}w])
\\&=e^{-P^{2N_{A,F}}_{f,\widetilde m}(\RV_n)}\int_{\{\rho\in\CF:A_{w_{N_{A,F}-1},\rho_0}=1\}}e^{S_{2N_{A,F}}f(a_{\tau,w}w\rho,\RV_{n})}d\widetilde m_{\RV_{n+2N_{A,F}}}(\rho)
\\&\geq 
\left(
\prod_{i=0}^{2N_{A,F}-1}M(\RV_{n+i})
\right)^{-1}
\bd^{-1}e^{-2N_{A,F}\K}N_{A,F}^{-1}.
\end{align*}
\end{proof}

Let $f\in H_{\exponent,\RS}^{s}(\CF)$. We define 
\begin{align}
\tail:=\bigcap_{k\in\mathbb{Z}}\RD^k\left(\left\{\RV\in\RS:\lim_{n\to\infty}\frac{1}{n}\log M_f(\RV_n)=0\right\}\right). 
\end{align}
Note that if $f\in H_{\exponent,\RS}^{ns}(\CF)$ then $\int |\log M_f(\RV)| d\RP(\RV)<\infty$.
Therefore, by Birkhoff's ergodic theorem, if $f\in H_{\exponent,\RS}^{ns}(\CF)$ then we have $\RP(\tail)=1$. 
\begin{lemma}\label{lemma pointwise pressure and global pressure}
  Let $F$ be a subset of $E$ such that $\CF$ is finitely primitive and let $f\in H_{\exponent,\RS}^{s}(\CF)$.
We assume that a sequence $\{\widetilde{m}_{n}(\RV)\}_{n\in\mathbb{N}\cup\{0\}}$ of Borel probability  measures on $\CF$ satisfy the condition of Lemma \ref{lemma gibbs}.
Then for all $\RV\in\dis\cap\cb\cap\pr\cap\tail$ we have 
    \[
    P(f)=\lim_{n\to\infty}\frac{1}{n}\log A_{n}^\RV(f)=\lim_{n\to\infty}\frac{1}{n}P^n_{f,\widetilde m}(\RV). 
    \]
\end{lemma}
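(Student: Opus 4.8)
The first of the two asserted equalities, $P_F(f)=\lim_{n}\frac1n\log A_{n,F}^\RV(f)$ (which specializes to $P(f)=\lim_n\frac1n\log A_n^\RV(f)$ when $F=E$), is already contained in Lemma \ref{lemma several form of prressure}, valid for every $\RV\in\dis\cap\cb\cap\pr$. So the entire content of the statement is the second equality, and my plan is to sandwich $e^{P^n_{f,\widetilde m}(\RV)}$ between two multiples of $A_{n,F}^\RV(f)$ using the two-sided Gibbs estimate of Lemma \ref{lemma gibbs}, then take $\frac1n\log$ and let $n\to\infty$.

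To produce the sandwich I would sum the two bounds of Lemma \ref{lemma gibbs} over the length-$n$ cylinders, which partition $\CF$, so that $\sum_{\tau\in F_A^n}\widetilde m_\RV([\tau])=1$. From the upper bound $\widetilde m_\RV([\tau])\le\bd\exp(S_nf(\tilde\tau,\RV)-P^n_{f,\widetilde m}(\RV))$ together with $S_nf(\tilde\tau,\RV)\le\overline{S_n}f([\tau]_F,\RV)$ one gets $e^{P^n_{f,\widetilde m}(\RV)}\le\bd\,A_{n,F}^\RV(f)$. From the lower bound, summing and using the reverse distortion inequality $\overline{S_n}f([\tau]_F,\RV)\le S_nf(\tilde\tau,\RV)+\log\bd$ (the same bounded-distortion estimate recorded in the proof of Lemma \ref{lemma comparablity}) one gets $e^{P^n_{f,\widetilde m}(\RV)}\ge(C_n\bd)^{-1}A_{n,F}^\RV(f)$, where $C_n:=\bd\,e^{2N_{A,F}\K}N_{A,F}\prod_{i=0}^{2N_{A,F}-1}M_f(\RV_{n+i})$ is the prefactor appearing in Lemma \ref{lemma gibbs}. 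Altogether
\[
(C_n\bd)^{-1}A_{n,F}^\RV(f)\le e^{P^n_{f,\widetilde m}(\RV)}\le\bd\,A_{n,F}^\RV(f).
\]

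Taking $\frac1n\log$ of this chain, the middle term becomes $\frac1n P^n_{f,\widetilde m}(\RV)$ and the outer terms become $\frac1n\log A_{n,F}^\RV(f)$ perturbed by $\frac1n\log\bd$ and $\frac1n\log C_n$. By Lemma \ref{lemma several form of prressure}, $\frac1n\log A_{n,F}^\RV(f)\to P_F(f)$ for $\RV\in\dis\cap\cb\cap\pr$, and $\frac1n\log\bd\to0$ since $\bd$ is a constant; so everything reduces to showing $\frac1n\log C_n\to0$. This is the one step that uses the set $\tail$, and I expect it to be the only real obstacle. All factors of $C_n$ except $\prod_{i=0}^{2N_{A,F}-1}M_f(\RV_{n+i})$ are constants in $n$ and hence contribute $o(1)$ after division by $n$; and since the number $2N_{A,F}$ of remaining factors is fixed, it suffices to check $\frac1n\log M_f(\RV_{n+i})\to0$ for each fixed $i$. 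Writing $\frac1n\log M_f(\RV_{n+i})=\frac{n+i}{n}\cdot\frac{1}{n+i}\log M_f(\RV_{n+i})$ and invoking the defining property of $\tail$, namely $\frac1m\log M_f(\RV_m)\to0$ as $m\to\infty$ (which holds for $\RV\in\tail$, and in particular forces $M_f(\RV_m)<\infty$ for all large $m$), this product tends to $1\cdot0=0$.

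Combining the three observations, for $\RV\in\dis\cap\cb\cap\pr\cap\tail$ both outer terms of the sandwich converge to $P_F(f)$, and therefore $\frac1n P^n_{f,\widetilde m}(\RV)\to P_F(f)=\lim_n\frac1n\log A_{n,F}^\RV(f)$, as required. The summation of the Gibbs bounds and the distortion bookkeeping through $\bd$ are routine; the genuinely essential ingredient is the sub-exponential growth of $M_f$ along the orbit supplied by $\tail$, which neutralizes the $n$-dependent prefactor $C_n$ coming from the lower Gibbs estimate.
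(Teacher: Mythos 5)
Your proposal is correct and follows essentially the same route as the paper: sum the two-sided Gibbs estimate of Lemma \ref{lemma gibbs} over the length-$n$ cylinders to sandwich $e^{P^n_{f,\widetilde m}(\RV)}$ between constant multiples of $A_{n}^\RV(f)$ (up to the prefactor $\prod_{i=0}^{2N_{A,F}-1}M_f(\RV_{n+i})$), then invoke Lemma \ref{lemma several form of prressure} and the sub-exponential growth of $M_f$ along the orbit guaranteed by $\tail$. You merely make explicit the two steps the paper leaves implicit (the summation over cylinders and the $\frac1n\log C_n\to0$ estimate), and your bookkeeping of the constants is consistent with the paper's.
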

\begin{proof}
    Let $f\in H_{\exponent,\RS}^{s}(\CF)$ and let $\RV\in\dis\cap\cb\cap\pr\cap\tail$. By Lemma \ref{lemma gibbs}, for all $n\in\mathbb{N}$ we obtain
    \[\bd^{-1}
     \leq\frac{A_{n}^\RV(f)}{e^{P^n_{f,\widetilde m}(\RV)}}\leq
     \bd e^{2N_{A,F}\K}N_{A,F}\prod_{i=0}^{2N_{A,F}-1}M(\RV_{n+i})
    \] 
Hence, by Lemma \ref{lemma several form of prressure}, we obtain the desired result.
\end{proof}

\subsection{Compact approximation property of the relative topological pressure}\label{section Compact approximation property of the relative topological pressure}

    For $f\in H_{\exponent,\RS}^1(\CE)\setminus H_{\exponent,\RS}^{s}(\CE)$ (i.e. $\int\log M_fd\RP=\infty$) we set $P(f)=\infty$.

\begin{prop}\label{prop compact apploximation of the pressure}
    Let $f\in H_{\exponent,\RS}^1(\CE)$. We have
    \[
    P(f)=\sup\{P_F(f_F):F\subset E,\ \#F<\infty,\ \CF\text{ is finitely primitive} \}.
    \]
\end{prop}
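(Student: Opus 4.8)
The plan is to prove the two inequalities separately. The inequality $\sup_F P_F(f_F)\le P(f)$ is the easy half, and the approximation inequality $P(f)\le\sup_F P_F(f_F)$ carries the weight.

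For the easy half, fix a finite $F\subset E$ with $F_A^\infty$ finitely primitive. Because $F_A^n\subset E_A^n$ and $[\tau]_F\subset[\tau]_E$ for $\tau\in F_A^n$, the suprema defining the partition function can only grow when passing from $F$ to $E$, so $A_{n,F}^\RV(f_F)\le A_n^\RV(f)$ for every $\RV$ and $n$. Evaluating both sides at an $\RV$ lying in the full-measure set on which Lemma \ref{lemma several form of prressure} computes both pressures as $\lim_n\tfrac1n\log A_{n,\bullet}^\RV$, I get $P_F(f_F)\le P(f)$, and hence $\sup_F P_F(f_F)\le P(f)$. (If $f\notin H_{\exponent,\RS}^s(\CE)$ then $P(f)=\infty$ and there is nothing to prove here.)

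For the approximation inequality I would fix an exhaustion $F_1\subset F_2\subset\cdots$ of $E$ by finite sets with $\bigcup_N F_N=E$ and $\Lambda_a\subset F_1$, so that every $F_{N,A}^\infty$ is finitely primitive with the \emph{same} witness $\Lambda$; in particular $N_{A,F_N}=N_A$ and the constants $\bd$, $\K$ do not depend on $N$. Two ingredients are then combined. First, for each fixed $n$ one has $A_{n,F_N}^\RV(f_{F_N})\uparrow A_n^\RV(f)$ as $N\to\infty$: the index sets $F_{N,A}^n$ increase to $E_A^n$, and since $\Lambda_a\subset F_N$ every admissible word extends to an $F_N$-admissible sequence, so the cylinders $[\tau]_{F_N}$ exhaust a dense subset of $[\tau]_E$ and $\overline{S_n}f([\tau]_{F_N},\RV)\uparrow\overline{S_n}f([\tau]_E,\RV)$ by continuity. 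Monotone convergence for the counting sum gives the pointwise statement, and integrating, $\int\log A_{n,F_N}^\RV(f_{F_N})\,d\RP\uparrow\int\log A_n^\RV(f)\,d\RP$ (the lower bound $\log A_{n,F_1}^\RV$ being integrable because $f_{F_1}$ is bounded). Second, and this is the technical heart, I would establish a finite-$n$ lower bound for $P_{F_N}(f_{F_N})$ that is uniform in $N$. Gluing admissible words $\tau\in F_{N,A}^n$ and $\rho\in F_{N,A}^m$ by a connecting word $w\in\Lambda$, keeping the two outer blocks as suprema while bounding only the inner block of length $N_A$ — whose coordinates lie in the fixed finite set $\Lambda_a$, where $f$ is controlled uniformly by $\K$ and $v_\exponent(f)$ — yields, for $\RV\in\dis\cap\cb$,
\[
A_{n+N_A+m,F_N}^\RV(f_{F_N})\ \ge\ R'\,A_{n,F_N}^\RV(f_{F_N})\,A_{m,F_N}^{\RV_{n+N_A}}(f_{F_N}),
\]
with $R':=\bd^{-2}e^{-N_A(\K+v_\exponent(f))}$ \emph{independent of $N$}. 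Integrating against $\RP$ and using that $\RD$ preserves $\RP$ to absorb the shift $\RV\mapsto\RV_{n+N_A}$, the sequence $n\mapsto\int\log A_{n,F_N}^\RV(f_{F_N})\,d\RP$ is superadditive up to the additive defect $\log R'$ and the index gap $N_A$; a Fekete-type estimate, together with $\lim_n\tfrac1n\int\log A_{n,F_N}^\RV(f_{F_N})\,d\RP=P_{F_N}(f_{F_N})$ (valid as $f_{F_N}$ is bounded, hence normal summable, via Lemma \ref{lemma several form of prressure} and Birkhoff's theorem), gives
\[
P_{F_N}(f_{F_N})\ \ge\ \frac{1}{n+N_A}\Big(\int\log A_{n,F_N}^\RV(f_{F_N})\,d\RP+\log R'\Big)
\]
for all $n$ and $N$. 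Letting $N\to\infty$ at fixed $n$ and using the monotone convergence above, then letting $n\to\infty$ and invoking $\tfrac1n\int\log A_n^\RV(f)\,d\RP\to P(f)$ (Lemma \ref{lemma pointwise pressure and global pressure}, with uniform integrability supplied by $\log M_f$ in the summable case), I obtain $\sup_N P_{F_N}(f_{F_N})\ge P(f)$. If $f\notin H_{\exponent,\RS}^s(\CE)$, the same supermultiplicativity forces $\int\log A_{N_A+2}^\RV(f)\,d\RP=\infty$ (since $M_f\le A_1^\RV(f)$), and the displayed bound then yields $\sup_N P_{F_N}(f_{F_N})=\infty=P(f)$. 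In all cases this, with the easy half, gives the claimed equality.

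The step I expect to be the main obstacle is the interchange of the limits $n\to\infty$ and $F_N\uparrow E$. A pointwise bound of the form $\tfrac1n\log A_{n,F}^\RV(f_F)\le P_F(f_F)+o(1)$ cannot be made uniform in $F$ in the random setting, since the error is governed by the base point along its $\RD$-orbit. Passing to $\RP$-integrated partition functions removes this difficulty, because there the only defect in supermultiplicativity is the genuinely uniform constant $\log R'$; and the whole scheme hinges on demanding $\Lambda_a\subset F_N$ and reusing a single primitivity witness $\Lambda$, which is precisely what prevents $N_A$, $R'$, $\bd$ and $\K$ from degenerating as $N\to\infty$.
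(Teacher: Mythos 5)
Your argument is correct, and it overlaps with the paper's proof only on the harder, non-summable half. The paper splits the statement into two cases: when $f\in H_{\exponent,\RS}^{1,s}(\CE)$ it simply cites \cite[Theorem 4.1]{Denkerkiferstadlbauer}, and it only writes out an argument when $\int\log M_f\,d\RP=\infty$, where it glues $n$ single letters through connecting words from $\Lambda$, integrates against $\RP$ via Birkhoff, and concludes $P_k(f_k)\ge(1+N_A)^{-1}\bigl(-2\log\bd-N_A\K+\int\log M_{f_k}\,d\RP\bigr)\to\infty$. Your two-block supermultiplicativity $A_{n+N_A+m,F_N}^\RV\ge R'A_{n,F_N}^\RV A_{m,F_N}^{\RV_{n+N_A}}$, specialized to $n=m=1$ and iterated, is exactly this estimate, and you exploit the same key uniformity (a single witness $\Lambda$ with $\Lambda_a\subset F_1$, so that $N_A$, $\bd$, $\K$ do not degenerate as $N\to\infty$); your constant $R'$ even tracks the H\"older correction $v_\exponent(f)$ that the restriction $f_{\Lambda_a^\infty}$ alone does not control, which the paper's own bound on $C_n$ glosses over. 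What your route buys is a self-contained proof of the summable case: the integrated Fekete bound $P_{F_N}(f_{F_N})\ge(n+N_A)^{-1}\bigl(\int\log A_{n,F_N}^\RV\,d\RP+\log R'\bigr)$, followed by monotone convergence in $N$ at fixed $n$ and then Fatou in $n$ (legitimate since $\log A_{n,F_1}^\RV$ supplies an integrable lower bound), replaces the external citation; your diagnosis that integrating the partition functions is what makes the $n$-versus-$F$ limit interchange work is precisely the point of the paper's passage through $\int\log M_{f_k}\,d\RP$. The only places I would tighten are bookkeeping: justify $\lim_j j^{-1}\int\log A_{j,F_N}^\RV\,d\RP=P_{F_N}(f_{F_N})$ by bounded convergence (available since $F_N$ is finite and $f\in H_{\exponent,\RS}^1$), and note that the $n=1$ case of your displayed bound already settles the non-summable case via $M_f\le A_1^\RV(f)$, so the detour through $A_{N_A+2}^\RV(f)$ is unnecessary.
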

\begin{proof}
    If $f\in H_{\exponent,\RS}^{1,s}(\CE)$ then the statement follows from \cite[Theorem 4.1]{Denkerkiferstadlbauer}. Let $f\in H_{\exponent,\RS}^1(\CE)\setminus H_{\exponent,\RS}^{s}(\CE)$. 
    Let $\{F_k\}_{k\in\mathbb{N}}\subset E$ be an ascending sequence such that $F_1=\Lambda_a$ and for all $k\in\mathbb{N}$ the set $F_k$ is a finite set, $F_k\subset F_{k+1}$ and $E=\bigcup_{k\in\mathbb{N}}F_k$.
    For all $n,k\in\mathbb{N}$ and $\RV\in\RS$ we will write $f_{k}:=f_{F_k}$, $A_{n,k}^\RV(f_k):=A_{n,F_k}^\RV(f_{F_k})$ and $P_{k}(f_{k}):=P_{F_k}(f_{F_k})$.
    By the monotone convergence theorem, we have
    \begin{align}\label{eq compact approximation lemma average divergence}
        \lim_{k\to\infty}\int \log M_{f_{k}}(\RV)d\RP(\RV)=\infty.
    \end{align}
    Define $\tilde \RD:=\RD^{N_A+1}$.
Since $f\in H^{1}_{\exponent,\RS}(\CE)$ and for each $k\in\mathbb{N}$ the set $F_k$ is finite, for all $k\in\mathbb{R}$ we have $\int |\log M_{f_k}|d\RP<\infty$. Thus, by Birkhoff's ergodic theorem, there exist a measurable function $\log M^*_{f_k}:\RS\rightarrow \mathbb{R}$ ($k\in\mathbb{N}$) and  $\RS_e\subset \RS$ such that $\RP(\RS_e)=1$ and for all $k\in\mathbb{N}$ and $\RV\in \RS_e$ we have       
\begin{align}\label{eq proof compact appro 1}
\log M^*_{f_k}(\RV)=\lim_{n\to\infty}\frac{1}{n}\sum_{i=0}^{n-1}
\log M_{f_{k}}\left(\tilde \RD^i(\RV)\right)    
\end{align}
and $\int \log M^*_{f_k}d\RP=\int\log M_{f_{k}}d\RP$.
We set 
$\RS_{\text{good}}:=\RS_{e}\cap\dis\cap\cb\cap\bigcap_{k\in\mathbb{N}}\RS_{p,f_k}.$
For all $n\in\mathbb{N}$ and $\tau=(\tau^{(1)},\cdots,\tau^{({n})})\in F^{n}$ (not necessarily admissible) there exists $(\alpha^{(1,\tau)},\cdots,\alpha^{(n,\tau)})\in\Lambda^{n}$ such that $\overline{\tau}:=\tau^{(1)}\alpha^{(1,\tau)}\cdots\tau^{(n)}\alpha^{(n,\tau)}\in F_A^{n+N_An}$. 
For all $n,k\in\mathbb{N}$ and $\RV\in\RS_{\text{good}}$ we obtain
\begin{align*}
    & A_{n+N_An,k}^{\RV}(f_k)
      \geq
    \sum_{\tau\in F^n}
    \exp(\underline{S_{n+N_An}}
    f_k([\overline{\tau}]_{F_k},\RV))
    \nonumber
    \\&\geq
    \sum_{\tau\in F^n}
    \prod_{i=0}^{n-1}
    e^{
    \underline{f_k}
    \left([\tau^{(j+1)}]_{F_k},\RV_{i(N_A+1)}\right)}
   C_n
    \geq \bd^{-n}C_n
    \prod_{i=0}^{n-1}A_{1,k}^{\tilde\RD^i(\RV)}(f_k),
\end{align*}
where $C_n:=\prod_{i=0}^{n-1}
    e^{
    \min_{w\in \Lambda}
    \left\{
    \underline{S_{N_A}}
    f_k\left([w]_{F_k},\RV_{i(N_A+1)+1}\right)
    \right\}}$.
On the other hand, for all $\RV\in\RS_{\text{good}}$ and $n,k\in\mathbb{N}$ we have
\begin{align*}
    &\frac{1}{n}\log C_n
\geq
-\frac{1}{n}\sum_{i=0}^{n-1}
\sum_{j=1}^{N_A}
\|(f_1)_{\RD^{i(N_A+1)}(\RV_j)}\|_\infty
-\log B_{f}
\geq-N_A\K-\log \bd
\end{align*}
and, by definitions of $A_{1,k}^{\RV}(f_k)$ and $M_{f_k}(\RV)$,  
we have
$   (1/{n})\log\left(
      \prod_{j=0}^{n-1}A_{1,k}^{\tilde \RD^j(\RV)}(f_k)
    \right)
    \geq (1/{n})\sum_{j=0}^{n-1}\log M_{f_k}(\tilde \RD^j(\RV)).$
Hence, by (\ref{eq proof compact appro 1}), for all $\RV\in\RS_{\text{good}}$ we have
\[
P_k(f_k)\geq\frac{1}{1+N_A}\left(-2\log \bd-N_A\K+\log M_{f_k}^*(\RV)\right).
\]
Since $\RP(\RS_{\text{good}})=1$, the above inequality implies that 
$P_k(f_k)\geq({1+N_A})^{-1}(-2\log \bd
-N_A\K+\int\log M_{f_k}d\RP).$
Hence, by (\ref{eq compact approximation lemma average divergence}), we obtain $\lim_{k\to\infty}P_k(f_k)=\infty$.
\end{proof}

\subsection{Fiberwise multifractal measures}\label{section Multifractal random conformal measures}

Let $F$ be a finite subset of $E$ such that $\CF$ is finitely primitive. Note that if $f\in H_{\exponent,\RS}(\CF)$ then for each $\RV\in\RS$ the potential $\tau\mapsto f_\RV(\tau)=f(\tau,\RV)$ is continuous on the compact set $\CF$. Thus, 
for all $\RV\in\RS$ the operator $\TOF:\CFspace\rightarrow \CFspace$ is bounded. Thus, by \cite[Proposition 3.4]{mayer2011distance}, we obtain the following theorem:
\begin{thm}\label{thm compact version of perron theorem}
    Let $F$ be a finite subset of $E$ such that $\CF$ is finitely primitive. and let $f\in H_{\exponent,\RS}(\CF)$. Then, there exist Borel probability measures $\{\widetilde m_{F,\RV}^f\}_{\RV\in\RS}$ on $\CF$ such that for all $\RV\in\RS$ we have 
    $\TOF^*(\widetilde m_{F,\RD(\RV)}^f)=\lambda_{f,F}(\RV)\widetilde m_{F,\RV}^f, 
    $ where 
    $
    \lambda_{f,F}(\RV)=
    \TOF^*(\widetilde m_{F,\RD(\RV)}^f)(1). 
    $
    \end{thm}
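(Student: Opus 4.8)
The plan is to exploit the finiteness of $F$, which makes $\CF$ a compact metric space, so that $\CFspace=C(\CF)$ and its dual is the space $M(\CF)$ of finite signed Borel measures with the weak-$*$ topology, in which the set $\mathcal{P}(\CF)$ of Borel probability measures is a nonempty, convex, weak-$*$ compact set. Since $f\in H_{\exponent,\RS}(\CF)$ and $F$ is finite, for each $\RV\in\RS$ the section $f_\RV$ is continuous and bounded on $\CF$, so $\TOF:C(\CF)\to C(\CF)$ is a bounded positive linear operator and $\TOF^*$ maps $M(\CF)$ continuously into itself, carrying positive measures to positive measures. Finite primitivity guarantees $\TOF(1)>0$ on all of $\CF$, whence $\TOF^*(\mu)(1)=\int\TOF(1)\,d\mu>0$ for every $\mu\in\mathcal{P}(\CF)$; thus the normalized dual
\[
\mathcal{N}_\RV(\mu):=\frac{\TOF^*(\mu)}{\TOF^*(\mu)(1)}
\]
is a well-defined, weak-$*$ continuous self-map of $\mathcal{P}(\CF)$, and the desired identity $\TOF^*(\widetilde m_{F,\RD(\RV)}^f)=\lambda_{f,F}(\RV)\widetilde m_{F,\RV}^f$ with $\lambda_{f,F}(\RV)=\TOF^*(\widetilde m_{F,\RD(\RV)}^f)(1)$ is precisely the backward recursion $\widetilde m_{F,\RV}^f=\mathcal{N}_\RV\bigl(\widetilde m_{F,\RD(\RV)}^f\bigr)$.

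Next I would solve this recursion along orbits of $\RD$. Fixing any reference measure $\mu_0\in\mathcal{P}(\CF)$, I would set
\[
\widetilde m_{F,\RV}^f:=\lim_{n\to\infty}\mathcal{N}_\RV\circ\mathcal{N}_{\RV_1}\circ\cdots\circ\mathcal{N}_{\RV_{n-1}}(\mu_0),
\]
where $\RV_k=\RD^k(\RV)$. Because $(\RD(\RV))_k=\RV_{k+1}$, continuity of $\mathcal{N}_\RV$ gives $\mathcal{N}_\RV(\widetilde m_{F,\RD(\RV)}^f)=\widetilde m_{F,\RV}^f$ for every $\RV\in\RS$ once the limit is shown to exist and to be independent of $\mu_0$ at each $\RV$; this is exactly what yields the conclusion for all $\RV$ rather than almost every $\RV$. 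Convergence I would obtain from the Birkhoff--Hopf contraction in the Hilbert projective metric on the cone of nonnegative measures: since $\CF$ is finitely primitive with witness $\Lambda^F\subset F^{N_{A,F}}$ and $f_\RV$ has bounded oscillation on cylinders (controlled by $\bd$ through $v_\exponent(f)$ when $\RV\in\dis$, and finite for each $\RV$ in general), the block operator $(\TOF^{N_{A,F}})^*$ maps the whole cone into a subcone of finite projective diameter, so the associated normalized map is a strict projective contraction; composing these blocks along the backward orbit produces a Cauchy sequence whose limit is independent of the seed $\mu_0$. Measurability of $\RV\mapsto\widetilde m_{F,\RV}^f$ then follows because each finite composition depends measurably on $\RV$ (as $f$ is a random continuous function) and pointwise limits of measurable maps into the metrizable space $\mathcal{P}(\CF)$ are measurable.

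The hard part will be that $H_{\exponent,\RS}(\CF)$ controls only the $\exponent$-variation of $f_\RV$ and not $\|f_\RV\|_\infty$ uniformly in $\RV$, so the projective diameters of the block images, and hence the per-block contraction factors, depend on $\RV$ and may deteriorate along the orbit $\{\RV_k\}$. Establishing that the product of these factors still tends to $0$ at every $\RV$ (so that the backward iteration converges everywhere, not merely almost everywhere) is the delicate point; it is precisely this compact random Perron--Frobenius mechanism that is packaged in \cite[Proposition 3.4]{mayer2011distance}, whose hypotheses (compact fibers, a measurable family of positive bounded transfer operators, and the aperiodicity/primitivity ensuring the cone contraction) are met in our setting. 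I would therefore verify these hypotheses and invoke that proposition to obtain the family $\{\widetilde m_{F,\RV}^f\}_{\RV\in\RS}$ together with $\lambda_{f,F}(\RV)=\TOF^*(\widetilde m_{F,\RD(\RV)}^f)(1)$, with the direct backward-iteration argument above serving as the conceptual content of the construction.
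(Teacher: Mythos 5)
Your proposal is correct and ultimately takes the same route as the paper: the paper's entire proof consists of observing that $F$ finite makes $\CF$ compact, hence $f_\RV$ is bounded and $\TOF$ is a bounded operator on $C(\CF)$, and then invoking \cite[Proposition 3.4]{mayer2011distance}, which is exactly the citation you fall back on. Your additional sketch of the backward-iteration/Hilbert-metric mechanism is a reasonable account of what that proposition encapsulates, and you correctly flag (and appropriately delegate to the reference) the one delicate point, namely uniformity of the cone contraction along the orbit.
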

Let $\{F_n\}_{n\in\mathbb{N}}$ be a sequence of finite subsets of $E$ such that $F_1=\Lambda_a$ and for all $n\in\mathbb{N}$ we have $F_n\subset F_{n+1}$ and $E=\bigcup_{n\in\mathbb{N}}F_n$.
For all $f\in H_{\exponent,\RS}^{1,s}(\CE)$, $n\in\mathbb{N}$ and $\RV\in\RS$ we will write $f_n:=f_{F_n}=f|_{(F_n)_A^{\infty}\times \RS}$, $\widetilde m_{n,\RV}:=\widetilde m_{F_n,\RV}^{f_n}$, $\lambda_n(\RV):=\lambda_{f_n,F_n}(\RV)$ and $\mathcal{L}_{f,n,\RV}:=\mathcal{L}_{f_n,F_n,\RV}$.
For all $j,n\in\mathbb{N}$ and $\RV\in \RS$ we set
$
\lambda_{n}^j(\RV):=\prod_{i=0}^{j-1}\lambda_n(\RD^{i}(\RV)).
$
In the rest of this section, we fix the above ascending sequence $\{F_n\}_{n\in\mathbb{N}}$ and $f\in H_{\exponent,\RS}(\CE)$.

\begin{lemma}\label{lemma tight}
    For all $\RV\in\dis\cap\fin$ the sequence $\{\widetilde m_{n,\RV}\}_{n\in\mathbb{N}}$ is tight.
\end{lemma}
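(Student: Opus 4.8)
The plan is to reduce the tightness of $\{\widetilde m_{n,\RV}\}_n$ to a uniform-in-$n$ control of each one-dimensional coordinate marginal, and then to extract that control from a one-step upper bound on cylinder measures together with the finiteness of $M_f$ along the orbit of $\RV$. Since $\CE$ is a closed subset of the countable product $E^{\mathbb{N}\cup\{0\}}$ and each factor is discrete, any set of the form $\{\tau\in\CE:\tau_j\in C_j\text{ for all }j\}$ with every $C_j\subset E$ finite is compact by Tychonoff's theorem. Hence it suffices to show that for each $\epsilon>0$ and each $j\in\mathbb{N}\cup\{0\}$ there is a finite $C_j\subset E$ with $\sup_n\widetilde m_{n,\RV}(\{\tau:\tau_j\notin C_j\})\leq\epsilon 2^{-j-1}$: taking $K:=\{\tau:\tau_j\in C_j\ \forall j\}$ then yields $\widetilde m_{n,\RV}(K)\geq 1-\epsilon$ for all $n$. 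Because $j$ is fixed before $C_j$ is chosen, every constant below may depend on $j$ and $\RV$, but must be independent of $n$. Note also that $\dis$ and $\fin$ are $\RD$-invariant, so $\RV_i\in\dis\cap\fin$ for all $i\geq 0$.

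First I would record a one-step estimate. Iterating $\mathcal{L}_{f,n,\RV_i}^*(\widetilde m_{n,\RV_{i+1}})=\lambda_n(\RV_i)\widetilde m_{n,\RV_i}$ from Theorem \ref{thm compact version of perron theorem} gives, for every $g$ and $j$, the identity $\int g\,d\widetilde m_{n,\RV}=\lambda_n^j(\RV)^{-1}\int\mathcal{L}_{f,n,\RV}^j(g)\,d\widetilde m_{n,\RV_j}$; applied to $g=1_{[e]}$ this produces
\[
\widetilde m_{n,\RV'}([e])\leq\frac{1}{\lambda_n(\RV')}\sup_{\rho\in[e]}e^{f(\rho,\RV')}\qquad(e\in F_n,\ \RV'\in\RS).
\]
Two facts then drive the argument. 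First, a lower bound $\lambda_n(\RV_i)\geq c_i>0$ with $c_i$ independent of $n$: since $\Lambda_a\subset F_1\subset F_n$, for each $\tau$ finite primitivity supplies $a\in\Lambda_a$ with $A_{a,\tau_0}=1$, and the local H\"older bound (as $\RV_i\in\dis$, so $v_\exponent(f_{\RV_i})\leq v_\exponent(f)$) together with the boundedness of $f_{\RV_i}$ on the compact $(\Lambda_a)_A^{\infty}$ bounds $f(a\tau,\RV_i)$ below, giving $\lambda_n(\RV_i)\geq\underline{M}_{f_n}(\RV_i)\geq c_i$. Second, the convergence of the tail series $\sum_{e\in E}\sup_{\rho\in[e]}e^{f(\rho,\RV')}\leq e^{v_\exponent(f)}\,\#\Lambda\,M_f(\RV')$. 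The main obstacle is precisely this second step, since the supremum over $[e]$ and the admissibility constraints block a naive comparison with $M_f(\RV')=\sup_\tau\mathcal{L}_{f,\RV'}(1)(\tau)$. I would resolve it by fixing a reference $\tau^*\in\CE$, using finite primitivity to choose for each $e$ a word $w^e\in\Lambda$ with $ew^e\tau^*$ admissible, bounding $\sup_{\rho\in[e]}e^{f(\rho,\RV')}\leq e^{v_\exponent(f)}e^{f(ew^e\tau^*,\RV')}$ by local H\"older, and then grouping the $e$ according to the finitely many values of $w^e$: for each fixed $w\in\Lambda$ the sum $\sum_{e:\,w^e=w}e^{f(e\,w\tau^*,\RV')}$ is dominated by $\mathcal{L}_{f,\RV'}(1)(w\tau^*)\leq M_f(\RV')$, because all such $e$ share the common continuation $w\tau^*$. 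As $\RV\in\fin$ gives $M_f(\RV_j)<\infty$ for all $j$, the full series converges and its tail over $e\notin C$ tends to $0$ as $C\uparrow E$.

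Finally I would transfer the level-$0$ control at base point $\RV_j$ to the level-$j$ marginal at base point $\RV$ by peeling. Writing $\{\tau:\tau_j\notin C\}=\sigma^{-j}\{\tau:\tau_0\notin C\}$ and using the multiplicativity $\mathcal{L}_{f,n,\RV}^j\big((1_B)\circ\sigma^j\big)=1_B\,\mathcal{L}_{f,n,\RV}^j(1)$ yields
\[
\widetilde m_{n,\RV}(\{\tau:\tau_j\notin C\})=\frac{1}{\lambda_n^j(\RV)}\int_{\{\tau_0\notin C\}}\mathcal{L}_{f,n,\RV}^j(1)\,d\widetilde m_{n,\RV_j}\leq\frac{\prod_{i=0}^{j-1}M_f(\RV_i)}{\prod_{i=0}^{j-1}c_i}\,\widetilde m_{n,\RV_j}(\{\tau_0\notin C\}),
\]
where I used $\|\mathcal{L}_{f,n,\RV}^j(1)\|_\infty\leq\prod_{i=0}^{j-1}M_{f_n}(\RV_i)\leq\prod_{i=0}^{j-1}M_f(\RV_i)$ and the lower bound $\lambda_n(\RV_i)\geq c_i$. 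Combining this with the one-step estimate and the tail summability at base point $\RV_j$ bounds $\widetilde m_{n,\RV}(\{\tau:\tau_j\notin C\})$ by a constant depending only on $j$ and $\RV$ times $\sum_{e\notin C}\sup_{\rho\in[e]}e^{f(\rho,\RV_j)}$, which tends to $0$ as $C\uparrow E$, uniformly in $n$. Choosing $C=C_j$ large enough to make this at most $\epsilon 2^{-j-1}$ completes the reduction, and hence the proof.
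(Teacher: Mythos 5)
Your proof is correct and takes essentially the same route as the paper's: both reduce tightness to uniform-in-$n$ control of the coordinate marginals $\widetilde m_{n,\RV}(\{\tau:\tau_j\notin C_j\})$, obtained by iterating the conformality relation, bounding the eigenvalues from below and $\mathcal{L}_{f,n,\RV}^j(1)$ from above via $\underline{M}_f$ and $M_f$ along the orbit, and reducing the tail to the convergent series $\sum_{e}e^{\overline{f}([e],\RV_j)}\leq \text{const}\cdot M_f(\RV_j)$ via finite primitivity. The only difference is organizational: the paper sums its Gibbs estimate directly over $(j+1)$-cylinders, whereas you peel off the first $j$ coordinates with the transfer operator before applying the one-step bound at $\RV_j$.
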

\begin{proof}
    Let $\RV\in\dis\cap\fin$. By Lemma \ref{lemma gibbs}, for $n\in\mathbb{N}$, $j\in\mathbb{N}\cup\{0\}$ and a finite set $\widetilde F\subset E$ we have
    \begin{align*}
        &\widetilde m_{n,\RV}
        \left(
        \bigcup_{e\in E\setminus \widetilde F}
        \{\tau\in(F_n)_A^\infty:\tau_j=e\}\right)
        =
        \sum_{e\in E\setminus \widetilde F}
        \sum_{\substack{\tau\in (F_n)_A^{j+1}\\\tau_j=e}}
        \widetilde m_{n,\RV}
        ([\tau]_{F_n})\\&
        \leq \bd\sum_{e\in E\setminus \widetilde F}
        \sum_{\substack{\tau\in (F_n)_A^{j}\\A_{\tau_{j-1},e}=1}}\lambda_n^{-(j+1)}(\RV)
        e^{\overline{S_j}f([\tau],\RV)}
        e^{\overline{f}([e],\RD^{j}(\RV))}
        \leq D_j\sum_{e\in E\setminus \widetilde F}
        e^{\overline{f}([e],\RD^{j}(\RV))}
        \end{align*}
     where $D_{j}:=\bd^{j+1}\left(\prod_{i=0}^{j}\underline{M}_f(\RD^i(\RV))\right)^{-1}
        \prod_{i=0}^{j-1}M_f(\RD^i(\RV))$.
        Note that for all $j\in\mathbb{N}\cup\{0\}$ we have $  \sum_{e\in E}
        e^{\overline{f}([e],\RD^{j}(\RV))}<\infty$. Indeed, for all $j\in\mathbb{N}\cup\{0\}$ we have
        \begin{align}\label{eq convergence of series}
        &\sum_{e\in E}
        e^{\overline{f}([e],\RD^{j}(\RV))}
        \leq
         \bd \#(\Lambda_a)M_f(\RD^j(\RV))<\infty.
        \end{align}
        Thus, for all $\epsilon>0$ and $j\in\mathbb{N}$ there exists a finite set $\widetilde F_j\subset E$ such that we have
$     D_j
        \sum_{e\in E\setminus \widetilde F_j}
        e^{\overline{f}([e],\RD^{j}(\RV))}
        \leq {\epsilon}/{2^{j}}.
$
We fix $\epsilon>0$. Then for all $n\in\mathbb{N}$ we have
$\widetilde m_{n,\RV}\left(E_A^\infty\cap\prod_{j=0}^{\infty} \widetilde F_j\right)
\geq 
1-\epsilon.$
Hence, for all  $\RV\in\dis\cap\fin$ the sequence $\{\widetilde m_{n,\RV}\}_{n\in\mathbb{N}}$ is tight. 
\end{proof}

\begin{thm}\label{thm conformal measure}
    For all $\RV\in\dis\cap\fin$ there exist Borel probability measures $\{\widetilde m_{\RV_{k-1}}^f\}_{k\in\mathbb{N}}$ on $\CE$ such that we have 
    $\TOk^*(\widetilde m_{\RV_k}^f)=\lambda_f(\RV_{k-1})\widetilde m_{\RV_{k-1}}^f$
     where 
    $\lambda_f(\RV_{k-1})=
    \TOk^*(\widetilde m_{\RV_k}^f)(1).$
\end{thm}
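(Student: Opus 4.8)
The plan is to construct the measures $\{\widetilde m_{\RV_{k-1}}^f\}_{k\in\mathbb{N}}$ as weak limits of the finite-level measures $\widetilde m_{n,\RV_j}$ along a single subsequence that works for the whole forward orbit of $\RV$. Fix $\RV\in\dis\cap\fin$. Since $\dis$ and $\fin$ are $\RD$-invariant, each orbit point $\RV_j=\RD^j(\RV)$ ($j\in\mathbb{N}\cup\{0\}$) again lies in $\dis\cap\fin$, so Lemma \ref{lemma tight} shows that $\{\widetilde m_{n,\RV_j}\}_{n\in\mathbb{N}}$ is tight for every $j$. As $\CE$ is a Polish space, the classical Prohorov theorem makes each such sequence relatively compact in the weak topology, and I would run a diagonal argument over the countable index set $\{j\in\mathbb{N}\cup\{0\}\}$ to extract one subsequence $\{n_l\}_{l\in\mathbb{N}}$ along which $\widetilde m_{n_l,\RV_j}\to\widetilde m_{\RV_j}^f$ weakly for every $j$ simultaneously. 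Tightness together with Prohorov's theorem then guarantees that each limit $\widetilde m_{\RV_j}^f$ is a Borel probability measure on $\CE$. Finally I set $\lambda_f(\RV_{k-1}):=\int\TOk(1)\,d\widetilde m_{\RV_k}^f$, so that $\lambda_f(\RV_{k-1})=\TOk^*(\widetilde m_{\RV_k}^f)(1)$ by definition.

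The heart of the proof is to pass to the limit in the finite-level eigenmeasure identity
\[
\int \mathcal{L}_{f,n,\RV_{k-1}}(g)\,d\widetilde m_{n,\RV_k}=\lambda_n(\RV_{k-1})\int g\,d\widetilde m_{n,\RV_{k-1}},\qquad g\in\Cspace,
\]
which holds for each $n$ by Theorem \ref{thm compact version of perron theorem} (duality). The right-hand side is immediate: by weak convergence $\int g\,d\widetilde m_{n_l,\RV_{k-1}}\to\int g\,d\widetilde m_{\RV_{k-1}}^f$. For the left-hand side I would first replace the truncated operator by the full operator $\TOk$. Since $\RV_{k-1}\in\fin$, the function $\TOk(g)$ lies in $\Cspace$, so weak convergence already gives $\int\TOk(g)\,d\widetilde m_{n_l,\RV_k}\to\int\TOk(g)\,d\widetilde m_{\RV_k}^f$.

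The main obstacle is controlling the difference between $\TOk$ and $\mathcal{L}_{f,n,\RV_{k-1}}$, i.e. interchanging the limit with the infinite sum defining the Ruelle operator; this is exactly where summability is used. The difference is supported on the tail alphabet, and for every $\tau$ one has
\[
\bigl|\TOk(g)(\tau)-\mathcal{L}_{f,n,\RV_{k-1}}(g)(\tau)\bigr|\leq\|g\|_\infty\sum_{e\in E\setminus F_n}e^{\overline{f}([e],\RV_{k-1})}.
\]
By the convergent-series estimate (\ref{eq convergence of series}), which is valid because $\RV_{k-1}\in\fin$, the right-hand tail tends to $0$ as $n\to\infty$ uniformly in $\tau$, so $\|\TOk(g)-\mathcal{L}_{f,n,\RV_{k-1}}(g)\|_\infty\to0$. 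As the $\widetilde m_{n_l,\RV_k}$ are probability measures, this forces $\int\mathcal{L}_{f,n_l,\RV_{k-1}}(g)\,d\widetilde m_{n_l,\RV_k}\to\int\TOk(g)\,d\widetilde m_{\RV_k}^f$.

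To finish, I would first take $g\equiv1$ in the eigenmeasure identity: its right-hand side is then $\lambda_{n_l}(\RV_{k-1})$, whence $\lambda_{n_l}(\RV_{k-1})\to\int\TOk(1)\,d\widetilde m_{\RV_k}^f=\lambda_f(\RV_{k-1})$, which simultaneously proves that this limit exists and identifies $\lambda_f$. Substituting back for arbitrary $g\in\Cspace$ yields
\[
\int\TOk(g)\,d\widetilde m_{\RV_k}^f=\lambda_f(\RV_{k-1})\int g\,d\widetilde m_{\RV_{k-1}}^f,
\]
that is, $\TOk^*(\widetilde m_{\RV_k}^f)=\lambda_f(\RV_{k-1})\widetilde m_{\RV_{k-1}}^f$, as required. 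The only genuinely delicate point is the uniform tail bound of the previous paragraph; the rest (Prohorov, diagonalisation, and weak convergence tested against the fixed continuous functions $g$ and $\TOk(g)$) is routine.
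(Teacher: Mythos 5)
Your proposal is correct and follows essentially the same route as the paper: tightness (Lemma \ref{lemma tight}) plus Prohorov's theorem to extract weak limits along the forward orbit, followed by passage to the limit in the finite-level eigenmeasure identity, where the decisive step is the uniform tail estimate $\|\TOk(g)-\mathcal{L}_{f,n,\RV_{k-1}}(g)\|_\infty\leq\|g\|_\infty\sum_{e\in E\setminus F_n}e^{\overline{f}([e],\RV_{k-1})}\to 0$ coming from (\ref{eq convergence of series}). The only cosmetic differences are that you use one diagonal subsequence instead of the paper's nested inductive subsequences, and you obtain the convergence of $\lambda_{n_l}(\RV_{k-1})$ from the identity with $g\equiv 1$ rather than extracting it beforehand by boundedness $0<\lambda_n(\RV)\leq M_f(\RV)$; both choices are equivalent in substance.
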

\begin{proof}
Let $\RV\in\dis\cap\fin$.
We construct $\{\widetilde m_{\RV_{k-1}}^f\}_{k\in\mathbb{N}}$ inductively. Let $k=1$. 
Since for all $n\in\mathbb{N}$ we have $0<\lambda_{n}(\RV)\leq M_f(\RV)$, there exist a subsequence 
$\{\tilde n_{l}^{1*}\}_{l\in\mathbb{N}}\subset \mathbb{N}$ and $\lambda_f(\RV_0)\in\mathbb{R}$
such that 
\begin{align}\label{eq thm conformal measure lambda one}
    \lim_{l\to\infty}\lambda_{\tilde n_{l}^{1*}}(\RV_0)=\lambda_f(\RV_0).
\end{align}
By Lemma \ref{lemma tight} and Prohorov's theorem, there exists a subsequence 
$\{\tilde n_{l}^{1}\}_{l\in\mathbb{N}}\subset\{\tilde n_{l}^{1*}\}_{l\in\mathbb{N}}$ and a Borel probability measure $\widetilde m_{\RV_0}^f$ on $\CE$ 
such that 
\begin{align}\label{eq thm conformal measure m zero one}
\text{the sequence 
$\{\widetilde m_{\tilde n_{l}^{1},\RV_0}\}_{l\in\mathbb{N}}$
converges weakly to $\widetilde m_{\RV_0}^f$.
}    
\end{align}
Again, by Lemma \ref{lemma tight} and Prohorov's theorem, there exist a subsequence $\{n_{l}^1\}_{l\in\mathbb{N}}\subset\{\tilde n_{l}^{1}\}_{{l\in\mathbb{N}}}\subset\{\tilde n_{l}^{1*}\}_{{l\in\mathbb{N}}}$ and a Borel probability measure $\widetilde m_{\RV_1}^f$ on $\CE$ such that
\begin{align}\label{eq thm conformal measure m one one}
    \text{the sequence 
$\{\widetilde m_{ n_{l}^1,\RV_1}\}_{l\in\mathbb{N}}$ converges weakly to $\widetilde m_{\RV_1}^f$. 
}
\end{align}
We will show that $\mathcal{L}_{f,\RV_{0}}^*(\widetilde m_{\RV_1}^f)=\lambda_f(\RV_0)\widetilde m_{\RV_{0}}^f$. Let $g\in C^b(\CE)$. 
By the triangle inequality and Theorem \ref{thm compact version of perron theorem}, for all $l\in\mathbb{N}$ we have 
\begin{align*}
  &  \left|
    \mathcal{L}_{f,\RV_{0}}^*(\widetilde m_{\RV_1}^f)(g)
    -
    \lambda_f(\RV_0)\widetilde m_{\RV_{0}}^f(g)
    \right|
    \leq 
   \\& 
    \left|
    \mathcal{L}_{f,\RV_{0}}^*(\widetilde m_{\RV_1}^f)(g)
    -
    \mathcal{L}_{f,\RV_{0}}^*(\widetilde m_{n_l^1,\RV_1})(g)
    \right|
    +
    \left|
    \mathcal{L}_{f,\RV_{0}}^*(\widetilde m_{n_l^1,\RV_1})(g)
    -
    \mathcal{L}_{f,n_l^1,\RV_{0}}^*(\widetilde m_{n_l^1,\RV_1})(g)
    \right|
\nonumber\\&+\left|
   \nonumber \lambda_{n_l^1}(\RV_0)\widetilde m_{n_l^1,\RV_0}(g)
    -
    \lambda_{f}(\RV_0)\widetilde m_{n_l^1,\RV_0}(g)
    \right|
+ \left|
    \lambda_{f}(\RV_0)\widetilde m_{n_l^1,\RV_0}(g)
    -
    \lambda_{f}(\RV_0)\widetilde m_{\RV_0}^f(g)
    \right|.
\end{align*}
By (\ref{eq thm conformal measure lambda one}), (\ref{eq thm conformal measure m zero one}) and (\ref{eq thm conformal measure m one one}), the first, third, and fourth terms of the right-hand side in the above inequality converge to zero as $l\to\infty$.
On the other hand, for all $l\in\mathbb{N}$ we have
\begin{align*}
\nonumber&\left|\mathcal{L}_{f,\RV_{0}}^*(\widetilde m_{n_l^1,\RV_1})(g)
    -
    \mathcal{L}_{f,n_l^1,\RV_{0}}^*(\widetilde m_{n_l^1,\RV_1})(g)
    \right|
    \leq \|g\|_{\infty}
    \sum_{e\in E\setminus F_{n_{l}^1}}
          e^{\overline{f}([e],\RV_0)}
          .    
\end{align*}
By (\ref{eq convergence of series}), we obtain
$
\lim_{l\to\infty}
    \sum_{e\in E\setminus F_{n_{l}^1}}
          e^{\overline{f}([e],\RV_0)}=0.$
Hence,  we obtain $\mathcal{L}_{f,\RV_{0}}^*(\widetilde m_{\RV_1}^f)=\lambda_f(\RV_0)\widetilde m_{\RV_{0}}^f$.

Let $k\in\mathbb{N}$. We assume that a measure $\widetilde m_{\RV_{k}}^f$, $\lambda_f(\RV_{k-1})\in\mathbb{R}$ and a subsequence $\{n_{l}^{k}\}\subset\mathbb{N}$
satisfying 
$    \lim_{l\to\infty} \widetilde m_{n_{l}^{k},\RV_{k}}=\widetilde m_{\RV_{k}}^f$ and 
    $\mathcal{L}_{f,\RV_{k-1}}^*(\widetilde m_{\RV_{k}}^f)=\lambda_f(\RV_{k-1})\widetilde m_{\RV_{k-1}}^f$
are defined.
Note that, by the definitions of $\dis$ and $\fin$, for all $j\in\mathbb {N}$ we have $\RV_{j}\in \dis\cap\fin$
Thus, there exists a subsequence 
$\{\tilde n_{l}^{k+1}\}_{l\in\mathbb{N}}\subset \{n_{l}^{k}\}_{l\in\mathbb{N}}$ 
such that 
 $   \lim_{l\to\infty}\lambda_{\tilde n_{l}^{k+1}}(\RV_k)=\lambda_f(\RV_k).$
By Lemma \ref{lemma tight} and Prohorov's theorem, there exist a subsequence $\{n_{l}^{k+1}\}_{l\in\mathbb{N}}\subset\{\tilde n_{l}^{k+1}\}\subset\{ n_{l}^{k}\}$ and a Borel probability measure $\widetilde m_{\RV_{k+1}}^f$ on $\CE$ such that
the sequence 
$\{\widetilde m_{ n_{l}^{k+1},\RV_{k+1}}\}_{l\in\mathbb{N}}$ converges weakly to $\widetilde m_{\RV_{k+1}}^f$. 
By the exactly same argument in the case $k=1$, we can show that $\mathcal{L}_{f,\RV_{k}}^*(\widetilde m_{\RV_{k+1}}^f)=\lambda_f(\RV_k)\widetilde m_{\RV_{k}}^f$. Hence, we are done.
\end{proof}

    For all $\RV\in\dis\cap\fin$ Borel probability measures $\{\widetilde m_{\RV_{k-1}}^f\}_{k\in\mathbb{N}}$ on $\CE$ obtained in Theorem \ref{thm conformal measure} is called fiberwise multifractal measures with respect to $f$ and $\RV$.

\section{Random conformal graph directed Markov systems (RCGDMSs)}\label{section Conformal random graph directed Markov systems (CRGDMSs)}

Let $V$ be a finite set of vertices and let $E$ be a countable  set of directed edges. We define two functions $i:E\rightarrow V$ and $t:E\rightarrow V$ by setting $i(e)$ to be the initial vertex of $e$ and $t(e)$ to be the terminal vertex of $e$. Let $A:E\times E\rightarrow \{0,1\}$ be a incidence matrix  and let $\{X_v\}_{v\in V}$ be a set of non-empty compact subsets $X_v$ $(v\in V)$ of a common Euclidean space $\mathbb{R}^d$ ($d\geq 1$). We set $X:=\bigcup_{v\in V}X_v$ and $t(\tau)=t(\tau_{n-1})$, $i(\tau)=i(\tau_{0})$ for all $n\in \mathbb{N}$ and $\tau\in E^n_A$. For all $\tau\in\CE$ and $n\in\mathbb{N}\cup\{0\}$ we set $\tau|_n:=\tau_0\cdots\tau_{n-1}$.
\begin{definition}\label{def RGDMS}
 Let $(\RS,\mathcal{F},\RP)$ be a complete probability space and let $\RD:\RS\rightarrow \RS$ be an invertible bimeasurable ergodic measure-preserving map with respect to $(\RS,\mathcal{F},\RP)$. $\Phi:=(\RD:\RS\rightarrow\RS, \{\RV\mapsto \phi_{e,\RV}\}_{e\in E})$ is called  a random graph directed Markov system (RGDMS) if for each $\RV\in\RS$ and $e\in E$  the map $\phi_{e,\RV}:X_{t(e)}\rightarrow X_{i(e)}$ is  one-to-one contraction  with a Lipschitz constant at most a common number $0<\cc<1$ and for all $x\in X_{t(e)}$ the map $\RV\mapsto \phi_e(x,\RV):=\phi_{e,\RV}(x)$ is measurable.
\end{definition}
In the remainder of this paper, we fix a complete probability space $(\RS,\mathcal{F},\RP)$ and an invertible bimeasurable ergodic measure-preserving map $\RD:\RS\rightarrow \RS$ with respect to $(\RS,\mathcal{F},\RP)$. Let $\Phi$ be a RGDMS. For $\RV\in \RS$, $n\in\mathbb{N}$ and $\tau \in E^n_A$ we set 
\[
\phi_{\tau,\RV}:=\phi_{\tau_0,\RV}\circ\cdots\circ\phi_{\tau_{n-1},\RV_{k-1}}.
\]

Next, we introduce a random limit set $\{\RV\in\RS\mapsto J_\RV\subset X \}$ generated by a RGDMS $\Phi$. Since the maps $\phi_{e,\RV}$ ($e\in E,\ \RV\in\RS$) have a common contraction ratio $0<\cc<1$, for all $\tau\in\CE$, $\RV\in\RS$ and $n\in\mathbb{N}$ the diameter of non-empty compact set $\phi_{\tau|_{n},\RV}(X_{t(\tau_{n})})$ does not exceed $\cc^n$ and  $\phi_{\tau|_{n},\RV}(X_{t(\tau_{n})})\subset \phi_{\tau|_{n-1},\RV}(X_{t(\tau_{n-1})})$. Therefore, for all $\tau\in \CE$ and $\RV\in\RS$ the set
$\bigcap_{n\in\mathbb{N}}\phi_{\tau|_{n-1},\RV}(X_{t(\tau_{n-1})})$
is singleton. 
Hence, the random coding map $\pi_\RV:\CE\rightarrow X$ given by 
\begin{align}\label{eq def coding map}
    \{\pi_\RV(\tau)\}=\bigcap_{n\in\mathbb{N}}\phi_{\tau|_{n-1},\RV}(X_{t(\tau_{n-1})})
\end{align}
is well-defined.
Note that for each $\RV\in\RS$ the coding map $\pi_\RV$ is a H\"older continuous with the exponent $\exponent=-\log \cc$. Furthermore, for all $\tau\in \CE$ the map $\RV\in\RS\mapsto \pi_\RV(\tau)$ is measurable (see \cite[P.15]{RGDMS}).
We define a random limit set $\{\RV\in\RS\mapsto J_\RV\subset X\}$ by setting
\begin{align}\label{eq def limit set }
    J_\RV:=\pi_\RV(\CE) \ \ (\RV\in\RS).
\end{align}
Let $F\subset E$. We define $\Phi_F:=(\RD:\RS\rightarrow \RS,\ \{\RV\mapsto \phi_{e,\RV}\}_{e\in F})$. We denote by $\pi_{\RV,F}$ ($\RV\in\RS$) the coding map with respect to $\Phi_F$. We define a random limit set $\{\RV\in\RS\mapsto J_{\RV,F}\subset X\}$ generated by $\Phi_F$ by setting 
$
J_{\RV,F}:=\pi_{\RV,F}(\CF).$
\begin{definition}\label{def CRGDMS}
    A RGDMS $\Phi$ is said to be conformal (RCGDMS) if the following properties are satisfied:
    \begin{itemize}
        \item[$(C1)$] For each $v\in V$ the set $X_v$ is a compact connected subset of $\mathbb{R}^d$ which is the closure of its interior, that is,  $X_v=\overline{\text{Int}(X_v)}$.
        \item[$(C2)$] (Open set condition) For $\RP$-a.e. $\RV\in\RS$ and all $e,e'\in E$ with $e\neq e'$ we have
        $\phi_{e,\RV}(\text{Int}(X_{t(e)}))\cap\phi_{e',\RV}(\text{Int}(X_{t(e')}))=\emptyset$
        \item[$(C3)$]
        For every vertex $v\in V$ there exists a bounded open connected set $W_v$ such that $X_v\subset W_v\subset \mathbb{R}^d$ and for all $\RV\in\RS$ and each $e\in E$ with $t(e)=v$ the map $\phi_{e,\RV}$ extends to a $C^1$ conformal diffeomorphism from $W_v$ onto its image. Moreover for all $e\in E$ and $x\in W_{t(e)}$ the map $\RV\in\RS\mapsto\phi_{e,\RV}(x)$ is measurable.
        \item[$(C4)$] (Cone condition) There exist $\mathfrak{a}\in (0,\pi/2)$ and $\boldsymbol{u}\in\mathbb{R}^d$ such that for each $v\in V$ and $x\in X_v$ we have $\text{Con}(x,\mathfrak{a},\boldsymbol{u})\subset \text{Int}(X_v)$, where
        $\text{Con}(x,\mathfrak{a},\boldsymbol{u}):=\{y\in \mathbb{R}^d:\langle y-x,\boldsymbol{u}\rangle>|y-x||\boldsymbol{u}|\cos\mathfrak{a} \text{ and } |y-x|<\boldsymbol{u}\}$ and $\langle\cdot,\cdot\rangle$ denotes the usual inner product in $\mathbb{R}^d$ and $|\cdot|$ denotes the Euclidean norm on $\mathbb{R}^d$.
        \item[$(C5)$] 
        There exists $\diR'\subset \RS$ such that $\RP(\diR')=1$ and there exist two constants $L\geq 1$ and $0<\alpha_{\Phi}<1$ such that for all $\RV\in\diR'$, $e\in E$ and $x,y\in W_{t(e)}$ we have 
        $\left|
        |\phi_{e,\RV}'(y)|-|\phi_{e,\RV}'(x)|
        \right|
        \leq 
        L
        |\phi_{e,\RV}'(x)|
        |y-x|^{\alpha_\Phi}.
        $
    \end{itemize}
\end{definition}
    As a deterministic case, we can show the following lemma (see \cite[Lemma 19.3.4]{urbanski2022non})
    \begin{lemma}\label{lemma Holder continuity of the geometry potential}
        Let $\Phi$ be a RCGDMS. Then for all $\RV\in \bigcap_{k\in\mathbb{Z}}\RD^k\left(\diR'\right)$, $\tau\in E_A^*$ and $x,y\in W_{t(\tau)}$ we have
    \[
    \left|
    \log|\phi_{\tau,\RV}'(y)|-\log|\phi_{\tau,\RV}'(x)|
    \right|
    \leq \frac{L}{1-\cc^{\alpha_\Phi}}|y-x|^{\alpha_\Phi}.
    \]
    \end{lemma}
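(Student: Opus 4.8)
The plan is to reduce the multiplicative distortion of $\phi_{\tau,\RV}$ along the word $\tau$ to an additive sum of single-edge distortions via the chain rule, and then to sum the resulting geometric series produced by the common contraction ratio $\cc$. Fix $\RV\in\bigcap_{k\in\mathbb{Z}}\RD^{k}(\diR')$ and $\tau=\tau_0\cdots\tau_{n-1}\in E_A^n$, and write $\RV_j=\RD^j(\RV)$. Membership of $\RV$ in this $\RD$-invariant full-measure set guarantees $\RV_j\in\diR'$ for every $j\ge 0$, which is exactly what lets us invoke $(C5)$ at each step; this is the reason the statement is phrased over $\bigcap_{k\in\mathbb{Z}}\RD^k(\diR')$ rather than over $\diR'$ itself.

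First I would record the single-edge logarithmic form of $(C5)$: for $\RV\in\diR'$, $e\in E$ and $u,v\in W_{t(e)}$,
\[
\bigl|\log|\phi_{e,\RV}'(v)|-\log|\phi_{e,\RV}'(u)|\bigr|\le L\,|v-u|^{\alpha_\Phi}.
\]
Setting $a=|\phi_{e,\RV}'(v)|$ and $b=|\phi_{e,\RV}'(u)|$, the elementary estimate $|\log a-\log b|\le |a-b|/\min\{a,b\}$ applies. If $a\ge b$ one bounds $|a-b|\le L\,b\,|v-u|^{\alpha_\Phi}$ directly from $(C5)$; if $a<b$ one instead applies $(C5)$ with $u,v$ interchanged to get $|a-b|\le L\,a\,|v-u|^{\alpha_\Phi}$. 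In either case the factor $\min\{a,b\}$ cancels and the displayed bound follows. This two-sided use of $(C5)$ is the one genuinely non-routine point: a naive one-sided application leaves an uncontrolled factor $b/a$, so the estimate must be read from both endpoints.

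Next I would use conformality $(C3)$ together with the chain rule. For $x\in W_{t(\tau)}$ define the tail images $x^{(n)}:=x$ and $x^{(j)}:=\phi_{\tau_j,\RV_j}(x^{(j+1)})$ for $j=n-1,\dots,0$, so that $x^{(j)}=\phi_{\tau_j\cdots\tau_{n-1},\RV_j}(x)$; by admissibility of $\tau$ each $x^{(j+1)}$ lies in $X_{t(\tau_j)}\subset W_{t(\tau_j)}$, the domain of $\phi_{\tau_j,\RV_j}$. Define $y^{(j)}$ from $y$ in the same way. Conformality makes the scalar derivative norm multiplicative along compositions, so
\[
\log|\phi_{\tau,\RV}'(x)|=\sum_{j=0}^{n-1}\log|\phi_{\tau_j,\RV_j}'(x^{(j+1)})|,
\]
and likewise for $y$. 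Subtracting, then using the triangle inequality and the single-edge estimate, gives
\[
\bigl|\log|\phi_{\tau,\RV}'(y)|-\log|\phi_{\tau,\RV}'(x)|\bigr|\le L\sum_{j=0}^{n-1}|y^{(j+1)}-x^{(j+1)}|^{\alpha_\Phi}.
\]

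Finally I would exploit the common contraction ratio. Since $x^{(j+1)}$ and $y^{(j+1)}$ are the images of $x,y$ under the composition $\phi_{\tau_{j+1}\cdots\tau_{n-1},\RV_{j+1}}$ of $n-1-j$ maps each of Lipschitz constant at most $\cc$, one has $|y^{(j+1)}-x^{(j+1)}|\le \cc^{\,n-1-j}|y-x|$. Substituting and re-indexing by $m=n-1-j$ yields
\[
\bigl|\log|\phi_{\tau,\RV}'(y)|-\log|\phi_{\tau,\RV}'(x)|\bigr|\le L\,|y-x|^{\alpha_\Phi}\sum_{m=0}^{n-1}\cc^{\,m\alpha_\Phi}\le \frac{L}{1-\cc^{\alpha_\Phi}}\,|y-x|^{\alpha_\Phi},
\]
which is the claimed inequality. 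The only real obstacle is the single-edge step; the chain-rule decomposition and the geometric summation are routine once conformality and the uniform contraction ratio $\cc$ are in hand.
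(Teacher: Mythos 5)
Your proof is correct and is essentially the argument the paper intends: the paper omits the proof, deferring to the deterministic analogue (Lemma 19.3.4 of \cite{urbanski2022non}), whose proof is exactly this two-sided single-edge estimate from $(C5)$, the chain-rule telescoping of $\log|\phi_{\tau,\RV}'|$ along the fibres $\RV_j$ (which is why the $\RD$-invariant intersection $\bigcap_{k}\RD^k(\diR')$ appears), and the geometric summation via $\cc$. The only tacit point is that applying the single-edge bound and the Lipschitz constant $\cc$ at the intermediate points $x^{(j+1)},y^{(j+1)}$ for $x,y\in W_{t(\tau)}\setminus X_{t(\tau)}$ uses the standard convention that the extended maps carry $W_{t(e)}$ into $W_{i(e)}$ with the same contraction bound, a convention the paper itself relies on when it defines $\|\phi_{\tau,\RV}'\|_{\infty}$ as a supremum over $W_{t(\tau)}$.
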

For $\tau\in E_A^*$ and $\RV\in\RS$ we define
$\|\phi_{\tau,\RV}'\|_{\infty}:=\sup\{|\phi'_{\tau,\RV}(x)|:x\in W_{t(\tau)}\}$.
    As an immediate consequence of Lemma \ref{lemma Holder continuity of the geometry potential}, we obtain the bounded distortion lemma:
    \begin{lemma}\label{lemma bounded distortion}
        Let $\Phi$ be a RCGDMS. There exists a constant $K_{bd}\geq 1$ such that for all $\RV\in\diR$, $\tau\in E_{A}^*$ and $x,y\in W_{t(\tau)}$ we have
        \[
        \frac{1}{K_{bd}}\leq 
        \frac{|\phi_{\tau,\RV}'(x)|}{|\phi_{\tau,\RV}'(y)|}\leq K_{bd}
        \]
        Moreover, for all $\RV\in\diR$, $n,k\in\mathbb{N}$, $\tau\in E_A^n$ and
        $a\in E_{A}^k$ with $\tau a\in E_A^*$ we have 
        $
        K^{-1}_{bd}\|\phi'_{\tau,\RV}\|_{\infty}
        \|\phi'_{a,\RV_n}\|_{\infty}
        \leq \|\phi'_{\tau a}\|_{\infty}
        \leq
        \|\phi'_{\tau,\RV}\|_{\infty}
        \|\phi'_{a,\RV_n}\|_{\infty}
        $
    \end{lemma}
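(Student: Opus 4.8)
The plan is to deduce both assertions directly from the H\"older estimate of Lemma~\ref{lemma Holder continuity of the geometry potential}, which already controls the oscillation of $\log|\phi_{\tau,\RV}'|$, together with the conformal chain rule coming from (C3).

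\emph{Distortion bound.} First I would exponentiate the estimate of Lemma~\ref{lemma Holder continuity of the geometry potential}. Since $V$ is finite and each $W_v$ is a bounded open set, the number $D:=\max_{v\in V}\operatorname{diam}(W_v)$ is finite, so $|y-x|^{\alpha_\Phi}\le D^{\alpha_\Phi}$ for all $x,y\in W_{t(\tau)}$. Hence, for every $\RV\in\diR$ and $\tau\in E_A^*$,
\[
\left|\log\frac{|\phi_{\tau,\RV}'(x)|}{|\phi_{\tau,\RV}'(y)|}\right|\le\frac{L\,D^{\alpha_\Phi}}{1-\cc^{\alpha_\Phi}},
\]
and the first claim follows with $K_{bd}:=\exp\!\big(L\,D^{\alpha_\Phi}/(1-\cc^{\alpha_\Phi})\big)$. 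Here I am using that $\diR=\bigcap_{k\in\mathbb{Z}}\RD^k(\diR')$ is precisely the full-measure set on which Lemma~\ref{lemma Holder continuity of the geometry potential} is valid, and that the resulting constant depends neither on $\tau$ nor on $\RV$.

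\emph{Composition estimate.} Next I would exploit the factorization $\phi_{\tau a,\RV}=\phi_{\tau,\RV}\circ\phi_{a,\RV_n}$, where admissibility of $\tau a$ guarantees $t(\tau)=t(\tau_{n-1})=i(a_0)=i(a)$ and $\phi_{a,\RV_n}$ maps $W_{t(\tau a)}$ into $W_{t(\tau)}$, so that the composition is defined on $W_{t(\tau a)}$. Because each $\phi_{e,\RV}$ is conformal, the modulus of the derivative is multiplicative under composition, so the chain rule gives, for $x\in W_{t(\tau a)}$,
\[
|\phi_{\tau a,\RV}'(x)|=|\phi_{\tau,\RV}'(\phi_{a,\RV_n}(x))|\,|\phi_{a,\RV_n}'(x)|.
\]
Taking the supremum over $x$ and bounding $|\phi_{\tau,\RV}'(\phi_{a,\RV_n}(x))|\le\|\phi'_{\tau,\RV}\|_\infty$ (legitimate since $\phi_{a,\RV_n}(x)\in W_{t(\tau)}$) yields the upper bound at once. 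For the lower bound I would instead apply the distortion inequality just proved to replace $|\phi_{\tau,\RV}'(\phi_{a,\RV_n}(x))|$ by $K_{bd}^{-1}\|\phi'_{\tau,\RV}\|_\infty$, uniformly in $x$, and then take the supremum of $|\phi_{a,\RV_n}'(x)|$, giving $\|\phi'_{\tau a,\RV}\|_\infty\ge K_{bd}^{-1}\|\phi'_{\tau,\RV}\|_\infty\|\phi'_{a,\RV_n}\|_\infty$.

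I do not expect any genuine obstacle: as the text asserts, the lemma is an immediate consequence of Lemma~\ref{lemma Holder continuity of the geometry potential}. The only points demanding mild care are verifying that the intermediate point $\phi_{a,\RV_n}(x)$ lands in the domain $W_{t(\tau)}$ of $\phi_{\tau,\RV}$ (so that the distortion estimate applies there), which is exactly what admissibility of $\tau a$ provides, and checking that the single constant $K_{bd}$---being independent of $\tau$, $a$, and $\RV\in\diR$---can be reused in both inequalities.
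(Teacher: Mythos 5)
Your proof is correct and is exactly the standard argument the paper intends: the paper omits the proof entirely (declaring the lemma an ``immediate consequence'' of Lemma~\ref{lemma Holder continuity of the geometry potential}, as in the deterministic case), and your two steps --- exponentiating the H\"older estimate over the uniformly bounded sets $W_v$ to get a $\tau$- and $\RV$-independent $K_{bd}$, then combining the factorization $\phi_{\tau a,\RV}=\phi_{\tau,\RV}\circ\phi_{a,\RV_n}$ with conformality and the distortion bound --- are precisely how this is done. The only caveat, which you already flag and which the paper itself glosses over in defining $\|\phi'_{\tau,\RV}\|_\infty$ on $W_{t(\tau)}$, is that one implicitly needs the extended map $\phi_{a,\RV_n}$ to send $W_{t(\tau a)}$ into the domain $W_{t(\tau)}$ of $\phi_{\tau,\RV}$.
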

For $A,B\subset \mathbb{R}^d$ we define $\text{diam}(A):=\sup\{|x-y|: x,y\in A\}$ and $\text{dist}(A,B):=\inf\{|x-y|:{x\in A},\ y\in B\}$. If $A=\{x\}$ ($x\in\mathbb{R}^d$) then for $B\subset \mathbb{R}^d$ we will write $\text{dist}(x,B):=\text{dist}(\{x\},B)$.
We denote by  $B(x,r)$ the Euclidean open ball centered at $x\in\mathbb{R}^d$ having the radius $r>0$ and by $\partial A$ the Euclidean boundary of the set $A\subset \mathbb{R}^d$.
 The following lemmas can be shown as in the argument \cite[pp. 746-747]{urbanski2022non}
    \begin{lemma}\label{lemma diameter condition}
    Let $\Phi$ be a RCGDMS.
        There exists a constant $D_{\Phi}\geq 1$ such that for all $\RV\in\diR$ and $\tau\in E_A^*$ we have
  $D^{-1}_{\Phi}\|\phi_{\tau,\RV}\|_\infty\leq 
  \text{diam}(\phi_{\tau,\RV}(X_{t(\tau)}))\leq D_{\Phi}\|\phi_{\tau,\RV}\|_\infty.$
     \end{lemma}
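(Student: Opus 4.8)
The plan is to prove the two inequalities separately, obtaining the upper bound from the mean value inequality and the lower bound from the cone condition $(C4)$ together with the bounded distortion of Lemma \ref{lemma bounded distortion}. Throughout I fix $\RV\in\diR$ and $\tau\in E_A^*$ and write $\xi:=t(\tau)$; every constant produced below depends only on $\Phi$ (through $K_{bd}$, the cone parameters $\mathfrak{a}$ and $\boldsymbol{u}$, and the diameters of the finitely many sets $X_v$), and not on $\tau$ or $\RV$. Setting $\Delta:=\max_{v\in V}\text{diam}(X_v)$, note $\Delta<\infty$ since $V$ is finite and each $X_v$ is compact.

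For the upper bound I would argue as follows. Given $x,y\in X_\xi$, connect them by a rectifiable path $\eta$ inside the connected set $X_\xi\subset W_\xi$ whose length is at most a uniform multiple $C\Delta$ of $\Delta$; this is where one uses that each $X_v$ is the closure of its interior and satisfies the cone condition, so that its intrinsic diameter is comparable to its Euclidean diameter. Since $\phi_{\tau,\RV}$ is a $C^1$ conformal map on $W_\xi$, the image path has length $\int_\eta|\phi_{\tau,\RV}'|\le\|\phi_{\tau,\RV}'\|_\infty\,\text{length}(\eta)$, and hence $|\phi_{\tau,\RV}(x)-\phi_{\tau,\RV}(y)|\le C\Delta\,\|\phi_{\tau,\RV}'\|_\infty$. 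Taking the supremum over $x,y\in X_\xi$ yields the right-hand inequality. No distortion control is needed here.

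For the lower bound, which is the crux, I would fix any $x_0\in X_\xi$ and invoke the cone condition $(C4)$, which places the whole cone $\text{Con}(x_0,\mathfrak{a},\boldsymbol{u})$ inside $\text{Int}(X_\xi)$. In particular the axial point $y_0:=x_0+\tfrac{1}{2}\boldsymbol{u}$ satisfies $\langle y_0-x_0,\boldsymbol{u}\rangle=\tfrac12|\boldsymbol{u}|^2>\tfrac12|\boldsymbol{u}|^2\cos\mathfrak{a}$ and $|y_0-x_0|=\tfrac12|\boldsymbol{u}|<|\boldsymbol{u}|$, so $y_0\in X_\xi$. It then suffices to bound $|\phi_{\tau,\RV}(y_0)-\phi_{\tau,\RV}(x_0)|$ from below by a uniform multiple of $\|\phi_{\tau,\RV}'\|_\infty$. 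The key step, carried out exactly as in \cite[pp. 746--747]{urbanski2022non}, is the bi-Lipschitz estimate for conformal maps: there is $K\ge1$, depending only on $\Phi$, with $|\phi_{\tau,\RV}(y_0)-\phi_{\tau,\RV}(x_0)|\ge K^{-1}|\phi_{\tau,\RV}'(x_0)|\,|y_0-x_0|$. Combining this with Lemma \ref{lemma bounded distortion}, which gives $|\phi_{\tau,\RV}'(x_0)|\ge K_{bd}^{-1}\|\phi_{\tau,\RV}'\|_\infty$, yields $\text{diam}(\phi_{\tau,\RV}(X_\xi))\ge\frac{|\boldsymbol{u}|}{2KK_{bd}}\|\phi_{\tau,\RV}'\|_\infty$. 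Setting $D_\Phi:=\max\{1,\,C\Delta,\,2KK_{bd}/|\boldsymbol{u}|\}$ then finishes the proof.

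The main obstacle is precisely the bi-Lipschitz lower estimate underlying the lower bound. Bounded distortion (Lemma \ref{lemma bounded distortion}) and the Hölder continuity of Lemma \ref{lemma Holder continuity of the geometry potential} control only the magnitude $|\phi_{\tau,\RV}'|$ along the segment from $x_0$ to $y_0$, so writing $\phi_{\tau,\RV}(y_0)-\phi_{\tau,\RV}(x_0)=\int_0^1 D\phi_{\tau,\RV}(\gamma(t))(y_0-x_0)\,dt$ one must rule out cancellation coming from the rotational (orthogonal) part of the derivative $D\phi_{\tau,\RV}$. This is where conformality is genuinely used: in dimension one the derivative is a scalar of constant sign, so no cancellation occurs, while for $d\ge2$ the rigidity of conformal maps (Liouville's theorem) together with the distortion estimates bounds the variation of the rotational part along the short segment, forcing the chord $|\phi_{\tau,\RV}(y_0)-\phi_{\tau,\RV}(x_0)|$ to be comparable to $|\phi_{\tau,\RV}'(x_0)|\,|y_0-x_0|$. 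Once this estimate is in place, the remainder is a routine combination of the cone geometry and Lemma \ref{lemma bounded distortion}.
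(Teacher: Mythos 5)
Your argument is correct and is essentially the one the paper relies on: the paper gives no proof of this lemma and simply refers to \cite[pp. 746--747]{urbanski2022non}, which is precisely the mean-value upper bound plus the cone-condition and conformal bi-Lipschitz lower bound that you describe (and you correctly isolate the genuinely nontrivial ingredient, namely the lower estimate $|\phi_{\tau,\RV}(y_0)-\phi_{\tau,\RV}(x_0)|\geq K^{-1}|\phi_{\tau,\RV}'(x_0)|\,|y_0-x_0|$, which needs conformal rigidity and not just bounded distortion of $|\phi_{\tau,\RV}'|$). The only cosmetic point is that for the upper bound it is cleaner to take the connecting path in the open connected set $W_{t(\tau)}$ (via a chain of finitely many balls covering the compact connected set $X_{t(\tau)}$), rather than inside $X_{t(\tau)}$ itself, which spares you from justifying that the intrinsic diameter of $X_v$ is comparable to its Euclidean diameter.
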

\begin{lemma}\label{lemma boll conparability}
    Let $\Phi$ be a RCGDMS. For all $\RV\in\diR$, $\tau\in E^*_A$, $x\in W_{t(\tau)}$ and $0<r\leq\text{dist}(x,\partial W_{t(\tau)})$ we have
    $\phi_{\tau,\RV}(B(x,r))\subset B(\phi_{\tau,\RV}(x),\|\phi_{\tau,\RV}'\|_{\infty}r)$
     and 
    $B(\phi_{\tau,\RV}(x),K^{-1}_{bd}\|\phi_{\tau,\RV}'\|_{\infty}r)
    \subset
    \phi_{\tau,\RV}(B(x,r))$
\end{lemma}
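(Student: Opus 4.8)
The plan is to establish the two inclusions separately, deriving the outer one from the mean value inequality and the inner one from a continuation argument for the inverse branch; in both cases the only analytic input beyond elementary calculus is the uniform derivative control provided by Lemma \ref{lemma bounded distortion}. Fix $\RV\in\diR$, $\tau\in E_A^*$, $x\in W_{t(\tau)}$ and $0<r\le\text{dist}(x,\partial W_{t(\tau)})$, and abbreviate $\phi:=\phi_{\tau,\RV}$. By $(C3)$ the composition $\phi$ is a $C^1$ conformal diffeomorphism onto the open set $\Omega:=\phi(W_{t(\tau)})$, so $\phi$ is an open map and its inverse branch $\psi:=\phi^{-1}$ is again $C^1$ and conformal with $|\psi'(\phi(z))|=|\phi'(z)|^{-1}$. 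Since $r\le\text{dist}(x,\partial W_{t(\tau)})$ the convex ball $B(x,r)$ lies in $W_{t(\tau)}$, and Lemma \ref{lemma bounded distortion} gives $|\phi'(z)|\ge K_{bd}^{-1}\|\phi_{\tau,\RV}'\|_\infty$ for every $z\in W_{t(\tau)}$ (compare $|\phi'(z)|$ with $|\phi'(y)|$ for arbitrary $y$ and take the supremum over $y$); equivalently $|\psi'(\phi(z))|\le K_{bd}\|\phi_{\tau,\RV}'\|_\infty^{-1}$.

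For the outer inclusion I take $y\in B(x,r)$ and integrate $D\phi$ along the segment $[x,y]\subset B(x,r)\subset W_{t(\tau)}$. Since $\phi$ is conformal, the operator norm of $D\phi(z)$ equals $|\phi'(z)|$, so the mean value inequality gives $|\phi(y)-\phi(x)|\le\sup_{z\in[x,y]}|\phi'(z)|\,|y-x|\le\|\phi_{\tau,\RV}'\|_\infty r$, and the inequality is strict because $|y-x|<r$; hence $\phi(B(x,r))\subset B(\phi(x),\|\phi_{\tau,\RV}'\|_\infty r)$.

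The inner inclusion is the main obstacle: a direct lower bound $|\phi(y)-\phi(x)|\gtrsim|y-x|$ is not available from arc-length estimates, because the $\phi$-image of a segment can curve so that its endpoints are closer than its length. I would circumvent this by working with $\psi$. Fix $w$ with $|w-\phi(x)|<\rho:=K_{bd}^{-1}\|\phi_{\tau,\RV}'\|_\infty r$ and consider the straight segment $\sigma(s):=\phi(x)+s(w-\phi(x))$, $s\in[0,1]$. Set $s^*:=\sup\{s\in[0,1]:\sigma([0,s])\subset\phi(B(x,r))\}$; as $\phi(B(x,r))$ is open and contains $\sigma(0)=\phi(x)$ we have $s^*>0$, and for $s<s^*$ the preimage $y(s):=\psi(\sigma(s))\in B(x,r)$ is defined, with $\sigma([0,s])\subset\Omega$. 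Applying the mean value inequality to $\psi$ along this segment together with the bound $|\psi'|\le K_{bd}\|\phi_{\tau,\RV}'\|_\infty^{-1}$ on $\phi(B(x,r))$ yields $|y(s)-x|\le K_{bd}\|\phi_{\tau,\RV}'\|_\infty^{-1}|\sigma(s)-\phi(x)|\le r'':=K_{bd}\|\phi_{\tau,\RV}'\|_\infty^{-1}|w-\phi(x)|<r$, so all preimages stay in the strictly smaller ball $\overline{B(x,r'')}\subset B(x,r)$.

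It remains to show $s^*=1$. The uniform bound $|y(s)-x|\le r''$ makes $\{y(s)\}_{s<s^*}$ precompact in $W_{t(\tau)}$, so along a sequence $s\to s^*$ we get $y(s)\to y^*\in\overline{B(x,r'')}\subset B(x,r)$ with $\phi(y^*)=\lim\sigma(s)=\sigma(s^*)$; thus $\sigma(s^*)\in\phi(B(x,r))$. If $s^*<1$, openness of $\phi(B(x,r))$ would let $\sigma$ remain inside it slightly beyond $s^*$, contradicting maximality, so $s^*=1$ and $w=\sigma(1)=\phi(y^*)\in\phi(B(x,r))$. This gives $B(\phi(x),K_{bd}^{-1}\|\phi_{\tau,\RV}'\|_\infty r)\subset\phi(B(x,r))$. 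The points requiring the most care are the openness of $\phi(B(x,r))$, which I obtain from $(C3)$ rather than invoking invariance of domain, and the strictness $r''<r$ of the preimage radius bound, which is precisely what prevents the continuation from stalling at the boundary and forces $s^*=1$.
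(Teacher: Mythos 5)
Your proof is correct, and the paper itself gives no argument here beyond citing the standard one in Urba\'nski et al.\ (pp.~746--747), which establishes the inner inclusion by the same idea: the derivative lower bound $|\phi_{\tau,\RV}'|\geq K_{bd}^{-1}\|\phi_{\tau,\RV}'\|_\infty$ from Lemma \ref{lemma bounded distortion} combined with a topological continuation (open--closed/connectedness) argument showing the image of the ball cannot stop short of radius $K_{bd}^{-1}\|\phi_{\tau,\RV}'\|_\infty r$. Your segment-and-supremum formulation, with the strict bound $r''<r$ preventing the preimage from reaching $\partial B(x,r)$, is just a rephrasing of that argument, so no further comparison is needed.
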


Let $\Phi$ be a RCGDMS and let $F\subset E$. We define the geometric potential  $\zeta_F:\CF\times\RS\rightarrow \mathbb{R}$  by
\begin{align}\label{eq def geometric potential}
\zeta_F(\tau,\RV):=\log|\phi_{\tau_0,\RV}'
(\pi_{\RD(\RV)}(\sigma(\tau)))|.    
\end{align}
If $F=E$ then we will write $\zeta=\zeta_E$. 
Note that for all $\RV\in\RS$ the map $(\zeta_F)_\RV$ is continuous and for all $\tau\in \CF$ the map $\RV\mapsto \zeta_F(\tau,\RV)$ is measurable. Thus, $\zeta_F\in C_\RS(\CF)$. Moreover, by Lemma \ref{lemma Holder continuity of the geometry potential}, $\zeta_F$ is random locally H\"older with exponent $-\alpha_{\Phi}\log \cc$ and thus,  
\[
\RP(\diR)=1,
\text{ where } \diR:=\RS_{d,\zeta}.
\]
In the rest of this paper, we fix $\exponent:=-\alpha_{\Phi}\log \cc$.

\subsection{Bowen's formula}
Let $\Phi$ be a normal RCGDMS. 
Then for all $s\in\mathbb{R}$ we have $s\zeta\in H_{\exponent,\RS}^1(\CE)$. Therefore, by Proposition \ref{prop compact apploximation of the pressure}, we obtain the following:
\begin{prop}\label{prop compact approximation geometric pressure}
    Let $\Phi$ be a normal RCGDMS. Then for all $s\in\mathbb{R}$ we have
    \[
    P(s\zeta)=\{P_F(s\zeta_F):F\subset E,\ \#F<\infty,\ \CF\text{ is finitely primitive}\}.
    \]
\end{prop}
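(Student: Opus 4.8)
The plan is to obtain the statement as a direct specialization of Proposition~\ref{prop compact apploximation of the pressure} to the potential $f=s\zeta$. That proposition is stated for an arbitrary $f\in H_{\exponent,\RS}^1(\CE)$, and since $(s\zeta)_F=s\zeta_F$ for every $F\subset E$, its conclusion is exactly the asserted identity once we know that $s\zeta\in H_{\exponent,\RS}^1(\CE)$ for each $s\in\mathbb{R}$. Hence the entire task reduces to verifying this membership, which amounts to two checks: that $s\zeta$ is random locally H\"older with exponent $\exponent$, and that $s\zeta$ is bounded over finite subalphabets.

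The H\"older regularity is immediate. By (\ref{eq def geometric potential}) and the discussion preceding this subsection, $\zeta\in C_\RS(\CE)$ and, by Lemma~\ref{lemma Holder continuity of the geometry potential}, the section $\zeta_\RV$ is locally H\"older with exponent $\exponent=-\alpha_\Phi\log\cc$, with the relevant seminorm bounded uniformly (in the essential-supremum sense) in $\RV$; multiplying by the real constant $s$ scales the seminorm by $|s|$, so $s\zeta\in H_{\exponent,\RS}(\CE)$. The substantive point is the boundedness over finite subalphabets, and this is precisely where normality is used. Fix a finite set $\widetilde F\subset E$. For $\tau\in\widetilde F_A^\infty$ we have $\tau_0\in\widetilde F$ and $\pi_{\RD(\RV)}(\sigma(\tau))\in X_{t(\tau_0)}$, so by (\ref{eq def geometric potential}),
\[
|\zeta(\tau,\RV)|=-\log|\phi_{\tau_0,\RV}'(\pi_{\RD(\RV)}(\sigma(\tau)))|.
\]
The common contraction ratio gives $|\phi_{e,\RV}'|\le\cc<1$, while normality (\ref{eq not super expanding}) gives, for $\RP$-a.e.\ $\RV$, the lower bound $|\phi_{e,\RV}'(x)|>M_e$ for all $e\in\widetilde F$ and $x\in X_{t(e)}$. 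Consequently, for $\RP$-a.e.\ $\RV$ and every $\tau\in\widetilde F_A^\infty$,
\[
|\zeta(\tau,\RV)|\le\max_{e\in\widetilde F}|\log M_e|<\infty,
\]
the right-hand side being finite because $\widetilde F$ is finite. Scaling by $|s|$ preserves finiteness, so $s\zeta$ is bounded over finite subalphabets and thus $s\zeta\in H_{\exponent,\RS}^1(\CE)$.

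With $s\zeta\in H_{\exponent,\RS}^1(\CE)$ established, Proposition~\ref{prop compact apploximation of the pressure} applied to $f=s\zeta$ yields the claimed formula directly. I expect the only genuinely essential ingredient to be the boundedness estimate above: without the normality hypothesis there is no uniform lower bound on the derivatives $|\phi_{e,\RV}'|$, so $\log|\phi_{e,\RV}'|$ could be unbounded below even over a single finite alphabet, and the membership $s\zeta\in H_{\exponent,\RS}^1(\CE)$ would fail. Everything else is a routine invocation of the already-established compact approximation property of the relative topological pressure.
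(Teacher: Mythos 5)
Your proposal is correct and follows exactly the paper's route: the paper likewise deduces the statement from Proposition \ref{prop compact apploximation of the pressure} after observing that normality gives $s\zeta\in H_{\exponent,\RS}^1(\CE)$ for every $s\in\mathbb{R}$ (the paper leaves this membership as a one-line remark, whereas you verify both the local H\"older regularity and the boundedness over finite subalphabets explicitly). Your identification of normality as the essential ingredient for the lower bound on $|\phi_{e,\RV}'|$ is exactly the point the paper is relying on.
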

By using Theorem \ref{thm perron theorem} and Proposition \ref{prop compact approximation geometric pressure}, we can apply arguments in \cite[Section 3]{RGDMS} for the relative topological pressure $P(s\zeta)$. In particular, we obtain the following result.
\begin{thm}\label{Thm Bowen formula}
    Let $\Phi$ be a normal RCGDMS. Then for $\RP$-a.e. $\RV\in\RS$ we have
    \begin{align}\label{eq genelarized Bowen formula}
    \dim_H(J_\RV)=\inf \{s\geq 0: P(s\zeta)\leq 0\}.    
    \end{align}
    \end{thm}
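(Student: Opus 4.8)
The plan is to prove the two inequalities $\dim_H(J_\RV)\le s_0$ and $\dim_H(J_\RV)\ge s_0$ separately, where $s_0:=\inf\{s\ge0:P(s\zeta)\le0\}$; since $P(s\zeta)$ is a deterministic constant ($\RP$-a.e.) by Proposition \ref{prop definition of the pressure}, the threshold $s_0$ does not depend on $\RV$. Throughout I would work on the full-measure set $\diR$ where the bounded distortion lemmas hold, and record the basic geometric comparison: for $\tau\in E_A^n$ the cylinder image $\phi_{\tau,\RV}(X_{t(\tau)})$ has diameter comparable to $\|\phi'_{\tau,\RV}\|_\infty$ by Lemma \ref{lemma diameter condition}, and by the chain rule together with Lemma \ref{lemma bounded distortion} the latter is comparable, up to the universal constant $K_{bd}$, to $\exp(\overline{S_n}\zeta([\tau],\RV))$. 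Consequently, for $s\ge0$ the sum $\sum_{\tau\in E_A^n}(\mathrm{diam}\,\phi_{\tau,\RV}(X_{t(\tau)}))^s$ is comparable to $A_n^\RV(s\zeta)$, whose exponential growth rate equals $P(s\zeta)$ by Lemma \ref{lemma several form of prressure}.

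For the upper bound, fix $s>s_0$. Since $\zeta\le\log\cc<0$, the pressure bound $P((s+t)\zeta)\le P(s\zeta)+t\log\cc$ shows that $s\mapsto P(s\zeta)$ is strictly decreasing, so $P(s\zeta)<0$. The family $\{\phi_{\tau,\RV}(X_{t(\tau)})\}_{\tau\in E_A^n}$ covers $J_\RV=\pi_\RV(\CE)$ with mesh at most $\cc^n\to0$, and by the comparison above $\sum_{\tau\in E_A^n}(\mathrm{diam})^s\le K_{bd}^s D_\Phi^s\,A_n^\RV(s\zeta)$. On the full-measure set $\pr\cap\dis\cap\cb$ from Lemma \ref{lemma several form of prressure} we have $\tfrac1n\log A_n^\RV(s\zeta)\to P(s\zeta)<0$, hence $A_n^\RV(s\zeta)\to0$ and $\mathcal H^s(J_\RV)=0$. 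Thus $\dim_H(J_\RV)\le s$ for every $s>s_0$, giving $\dim_H(J_\RV)\le s_0$.

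For the lower bound, fix $s$ with $0\le s<s_0$, so that $P(s\zeta)>0$. By the compact approximation Proposition \ref{prop compact approximation geometric pressure} I can choose a finite $F\subset E$ with $\CF$ finitely primitive and $P_F(s\zeta_F)>0$. Since $F$ is finite and $\Phi$ is normal, $s\zeta_F\in H_{\exponent,\RS}^{ns}(\CF)$, so Theorem \ref{thm perron theorem} supplies a random conformal measure $\{\widetilde\nu^{s\zeta_F}_\RV\}$ and an eigenvalue $\lambda^*_{s\zeta_F}$ with $\int\log\lambda^*_{s\zeta_F}\,d\RP=P_F(s\zeta_F)>0$. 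Pushing $\widetilde\nu^{s\zeta_F}_\RV$ forward by $\pi_{\RV,F}$ yields a probability measure $\mu_\RV$ on $J_{\RV,F}\subset J_\RV$. Applied to these conformal measures, Lemma \ref{lemma gibbs} (for which $P^n_{f,\widetilde m}(\RV)=\log\prod_{j<n}\lambda^*_{s\zeta_F}(\RV_j)$) together with bounded distortion gives, for $\tau\in F_A^n$, $\widetilde\nu^{s\zeta_F}_\RV([\tau]_F)\asymp\|\phi'_{\tau,\RV}\|_\infty^s\big/\prod_{j<n}\lambda^*_{s\zeta_F}(\RV_j)$. I would then run the mass distribution argument: using a stopping-time partition into cylinders of diameter comparable to $r$, the cone condition $(C4)$ and the ball comparability of Lemma \ref{lemma boll conparability} bound the number of such cylinders meeting any ball $B(x,r)$ by a universal constant, whence $\mu_\RV(B(x,r))\le C\,r^s\big/\prod_{j<n}\lambda^*_{s\zeta_F}(\RV_j)$. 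Because $\int\log\lambda^*_{s\zeta_F}\,d\RP>0$, Birkhoff's ergodic theorem forces the denominator to be $\ge1$ for all the relevant cylinders once $r$ is small, so $\mu_\RV(B(x,r))\le C\,r^s$ and Frostman's lemma yields $\dim_H(J_{\RV,F})\ge s$. Letting $s\uparrow s_0$ along a countable sequence and intersecting the corresponding full-measure sets gives $\dim_H(J_\RV)\ge s_0$ on a set of full measure.

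The main obstacle is exactly the randomness entering the lower bound: unlike the deterministic case the eigenvalue $\lambda^*_{s\zeta_F}$ is a genuinely fluctuating random variable along the orbit $\{\RV_j\}$, so the clean factor $e^{nP}$ is replaced by the product $\prod_{j<n}\lambda^*_{s\zeta_F}(\RV_j)$, whose behaviour must be controlled by Birkhoff's theorem rather than by a fixed constant. Making this control uniform requires that the lengths $n=n(\tau)$ occurring in the $r$-scale stopping partition lie in a bounded window around $\log(1/r)$; here I would invoke the normality assumption (\ref{eq not super expanding}), which bounds the per-step contraction of each $\phi_{e,\RV}$ from below, together with the finiteness of $F$, to obtain uniform upper and lower contraction rates on $F$, so that the Birkhoff estimate on the eigenvalue product holds simultaneously for all cylinders meeting a given ball. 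The remaining steps parallel the arguments of Roy--Urba\'nski \cite[Section 3]{RGDMS} and of the deterministic theory, the genuinely new input being the pressure identities of Lemmas \ref{lemma several form of prressure} and \ref{lemma pointwise pressure and global pressure} and the compact approximation Proposition \ref{prop compact approximation geometric pressure}.
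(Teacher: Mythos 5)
Your proposal is correct and follows essentially the same route as the paper: the paper's proof of Theorem \ref{Thm Bowen formula} consists precisely of invoking Theorem \ref{thm perron theorem} and the new compact approximation property of Proposition \ref{prop compact approximation geometric pressure} (which replaces the evenly-varying-dependent \cite[Lemma 3.17]{RGDMS}) and then running the arguments of \cite[Section 3]{RGDMS}, which are the covering/partition-function upper bound and the finite-subsystem conformal-measure mass-distribution lower bound that you reconstruct. Your sketch simply makes explicit the details the paper delegates to that reference.
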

To compare Theorem \ref{Thm Bowen formula} and Bowen's formula established by Roy and Urba\'nski \cite{RGDMS}, we recall the some definitions used in \cite{RGDMS}.
For a RCGDMS $\Phi$ we define 
\[
\RU:=
\left\{s\in\mathbb{R}:M_{\text{RU}}(s)<\infty 
\right\},
\text{ where }
M_{\text{RU}}(s)=
\sum_{e\in E}\text{ess}\sup
\left\{\|\phi_{e,\RV}'\|_\infty^s:\RV\in\RS\right\}
.
\]
In \cite{RGDMS} for a RCGDMS $\Phi$ the relative topological pressure is defined as follows:
\begin{align}\label{def RU pressure}
    P_{\text{RU}}(s\zeta):=
    \left\{
 \begin{array}{cc}
   P(s\zeta)   & \text{if}\  t\in \RU\\
   \infty   & \text{otherwise } 
 \end{array}
 \right. .
\end{align}
Note that for a RCGDMS $\Phi$ we have $ Fin_{\text{RU}}\subset  Fin$ and if $E$ is a finite set then we have $ Fin_{\text{RU}}= Fin=\mathbb{R}$.
Thus, if $E$ is finite set then for all $s\in\mathbb{R}$ we have
$P_{\text{RU}}(s\zeta)=P(s\zeta)$.
\begin{definition}\label{def evenly varing}  
A RCGDMS is said to be evenly varying if 
\[
\sup_{e\in E}\frac{\text{ess} \sup\{\|\phi_{e,\RV}'\|_\infty:\RV\in\RS\}}{\text{ess} \inf\{\|\phi_{e,\RV}'\|_\infty:\RV\in\RS\}}<\infty.
\]
\end{definition}
\begin{rem}\label{rem evenly varying}
    In \cite{RGDMS}, for evenly varying RCGDMS Roy and Urba\'nski show the following compact approximation property for the relative topological pressure $P_{\text{RU}}(t\zeta)$ (see \cite[Lemma 3.17]{RGDMS}): For all $s\in\mathbb{R}$ we have
\begin{align}\label{eq RU compact apploximation}
    P_{\text{RU}}(s\zeta)=\{P_F(s\zeta_F):F\subset E,\ \#F<\infty,\ \CF\text{ is finitely primitive}\}.
\end{align}
    This property of the relative topological pressure $P_{\text{RU}}(t\zeta)$ played a rule in the proof of Bowen's formula \cite[Theorem 3.26]{RGDMS}.
\end{rem}
The following Bowen's formula is obtained by \cite[Theorem 3.26]{RGDMS}: For a normal evenly varying RCGDMS $\Phi$ and $\RP$-a.e. $\RV\in\RS$ we have
\begin{align}\label{eq bowen formula}
\dim_H(J_\RV)=\inf \{s\geq 0: P_{\text{RU}}(s\zeta)\leq 0\}.    
\end{align}

The following example show that there is a normal random conformal graph directed Markov system that does not satisfy the evenly varying assumption, for which Bowen's formula stated in Theorem \ref{Thm Bowen formula} holds, while above Bowen's formula fails to hold.
\begin{example}\label{example Bowen formula}
    Let $\RS=\mathbb{N}^{\mathbb{Z}}$ and let $\RP$ be the Bernoulli measure such that for all $i\in\mathbb{N}$ we have  
    \[
    \RP([i]_\RS)=
    C^{-1}_\RS
    \left(
    2^i\sum_{k=1}^{i}2^{k^2}
    \right)^{-1},
    \text{ where }
    C_\RS=\sum_{i=1}^{\infty}
    \left(
    2^i\sum_{k=1}^{i}2^{k^2}
    \right)^{-1}
    \]
and $[i]_{\RS}=\{\RV\in\RS=\mathbb{N}^\mathbb{Z}:\RV_0=i\}$.
Let $\RD:\mathbb{N}^{\mathbb{Z}}\rightarrow \mathbb{N}^{\mathbb{Z}}$ be the left-shift map. Let $E=\mathbb{N}$.
For $i\in\mathbb{N}$ and $\RV\in[i]_\RS$ we define a system $\{\phi_{e,\RV}:[0,1]\rightarrow [0,1]\}_{e\in E}$ consisting similarity maps $\phi_{e,\RV}$ such that  we have
\begin{align*}
    \|\phi_{e,\RV}'\|_\infty=
    \left\{
 \begin{array}{cc}
   2^{-2}   & \text{if}\ e=1\\
   2^{-(l^2+l)}   & \text{if}\ \sum_{k=1}^{l-1}2^{k^2-1}< e\leq \sum_{k=1}^{l}2^{k^2-1} \text{ and } 2\leq l\leq i\\
   8^{-e}   & \text{otherwise } 
 \end{array}
 \right.     
\end{align*}
and the intervals $\{\phi_{e,\RV}([0,1])\}_{e\in E}$ are mutually disjoint.  Note that since for all  $i\in\mathbb{N}$ and $\RV\in[i]_\RS$ we have
\[
\sum_{e\in E}\|\phi_{e,\RV}'\|_\infty\leq \sum_{l=1}^{\infty}\sum_{k=1}^{2^{l^2-1}}2^{-(l^2+l)}=\frac{1}{2},
\]
the last requirement can be satisfied. It is easy to see that $\Phi:=(\RD:\RS\rightarrow\RS,\{\RV\mapsto\phi_{e,\RV}\}_{e\in E})$ is a normal RCGDMS and not evenly varying. By the definition of $\Phi$, for all $0<s\leq 1$ we obtain
\begin{align}\label{eq example our bowen formula RU}
    M_{\text{RU}}(s)=\sum_{l=1}^{\infty}\sum_{k=1}^{2^{l^2-1}}2^{-s(l^2+l)}=
    \left\{
 \begin{array}{cc}
    2^{-1}  & \text{if}\ s=1\\
   \infty & \text{otherwise } 
 \end{array}
 \right..     
\end{align}
On the other hand, for all $i\in\mathbb{N}$, $\RV\in[i]_\RS$ and $0<s\leq 1$ we have
\begin{align*}
    M_{s\zeta}(\RV)=\sum_{k=1}^{l_{i}}\|\phi_{k,\RV}'\|^s
    +\sum_{k=l_i+1}^{\infty}\frac{1}{8^{sk}}
    \leq \sum_{l=1}^{i}2^{l^2(1-s)-ls-1}
    +\frac{1}{8^s-1}\leq \sum_{l=1}^{i}2^{l^2}
    +\frac{1}{8^s-1},
\end{align*}
where $l_i:=\sum_{k=1}^i2^{k^2-1}$.
Thus, for all $0<s\leq 1$ we obtain
\begin{align}\label{eq example our Bowen formula dks}
    \int \log M_{s\zeta}(\RV) d\RP(\RV)\leq C_\RS^{-1} \sum_{i=1}^{\infty}\frac{\log\left(\sum_{l=1}^{i}2^{l^2}
    +\frac{1}{8^s-1} \right) }{2^i\sum_{l=1}^{i}2^{l^2}}<\infty.
\end{align}
By Proposition \ref{prop definition of the pressure} and the definition of $M_{\text{RU}}(s)$, for all $0<s\leq 1$ we have $P(s\zeta)\leq \int \log M_{s\zeta}(\RV)d\RP(\RV)\leq \log M_{\text{RU}}(s)$.
In particular, by (\ref{eq example our bowen formula RU}), we have $P(\zeta)\leq -\log 2<0$.
Therefore, by (\ref{eq example our Bowen formula dks}) and Theorem \ref{Thm Bowen formula}, for $\RP$-a.e. $\RV\in\RS$ we have $\dim_H(J_\RV)=\inf \{s\geq 0: P(s\zeta)\leq 0\}<1.$ Hence, by (\ref{eq example our bowen formula RU}), for $\RP$-a.e. $\RV\in\RS$ we obtain
\[
\dim_H(J_\RV)<1= \{s\geq 0: P_{\text{RU}}(s\zeta)\leq 0\}.
\]
This implies that Bowen's formula with respect to $P(s\zeta)$  hold, while  Bowen's formula with respect to $P_{\text{RU}}(s\zeta)$ fails to hold. We would stress that for $\Phi$ and all $0<s<1$ the equation (\ref{eq RU compact apploximation}) is not valid. Indeed, by (\ref{eq example our Bowen formula dks}) for all $0<s<1$ we have 
\begin{align*}
    &\{P_F(s\zeta_F):F\subset E,\ \#F<\infty,\ \CF\text{ is finitely primitive}\}
    \\&\leq
    \int \log M_{s\zeta}(\RV) d\RP
    <
    \infty=P_{\text{RU}}(s\zeta).
\end{align*}
\end{example}

    

\section{Multifractal analysis of the Lyapunov exponent for 
RCGDMSs}\label{section multifractal}
Throughout this section, for a RCGDMS $\Phi$ we always assume that  $\#E=\infty$.
\begin{definition}\label{def RBSCF}
    A RCGDMS $\Phi$ is said to satisfy the random boundary separation condition with over finite subedges (RBSC) if for all finite set $F\subset E$ we have
    \[
    \OSC:=
    \min_{v\in V} \inf\left\{
    \text{dist} \left(\partial X_{v},\bigcup_{\substack{e\in F,i(e)=v}}\phi_{e,\RV}(X_{t(e)})\right):\RV\in \RS
    \right\}>0.
    \]
\end{definition}
It is easy to see that if $\Phi$ satisfy RBSC then for all $\RV\in \RS$ 
the coding map $\pi_\RV:\CE\rightarrow J_\RV$ is bijection.

 Let $\Phi$ be a RCGDMS satisfying RBSC. 
For $\RV\in\RS$, a Borel probability measure $\nu_\RV$ on $J_{\RV,F}$ and $\alpha\in \mathbb{R}$ we define level sets by 
\begin{align*}
&K_{\RV,\nu_\RV}(\alpha):=
\left\{
x\in J_{\RV,F}: \lim_{r\to +0}\frac{\log\nu_\RV(B(x,r))}{\log r}=\alpha 
\right\}\text{ and }\\&
{K}_{\RV,\nu_\RV}^{M}(\alpha):=
\left\{
\pi_{\RV}(\tau)\in J_{\RV,F}:
\lim_{n\to\infty}
\frac{\log\nu_\RV\left(\phi_{\tau|_n,\RV}\left(X_{t\left(\tau|_n\right)}\right)\right)}{\log\left|\phi_{\tau|_n,\RV}'(\pi_{\RV_{n+1}}(\sigma^{n+1}(\tau)))\right|}=\alpha
\right\}.    
\end{align*}
The local dimension spectrum is the family of the functions $\{g_{\RV,\nu_\RV}:\mathbb{R}\rightarrow\mathbb{R}\}_{\RV\in\RS}$ and  given by
$g_{\RV,\nu_\RV}(\alpha):=\dim_H(K_{\RV,\nu_\RV}(\alpha)).$
We also define the Markov dimension spectrum $\{g_{\RV,\nu_\RV}^M:\mathbb{R}\rightarrow \mathbb{R}\}_{\RV\in\RS}$ by 
$ 
g_{\RV,\nu_\RV}^M(\alpha):=\dim_H(K_{\RV,\nu_\RV}^M(\alpha)).$

For a RCGDMS $\Phi$ we set 
\[
\lowb:=\bigcap_{k\in\mathbb{Z}}\RD^{k}(\{\RV\in\RS: \inf\{|\phi_{e,\RV}'(x)|:x\in X_{t(e)}\}\geq M_e\text{ for all } e\in E\}).
\]
Since $\RD$ is invertible and $\RP$-preserving, we have $\RP(\lowb)=1$ if $\Phi$ is normal.
Moreover, if $\Phi$ is normal then $\zeta\in H_{\exponent,\RS}^{1}(\CE)$.

\begin{prop}\label{prop markov dimension and local dimension}
Let $\Phi$ be a RCGDMS $\Phi$ satisfying RBSC and let $F$ be a finite set.
For  all  $\RV\in \diR\cap \lowb
$, a Borel probability measure $\nu_{\RV}$ on $J_{\RV,F}$ and $\alpha\in \mathbb{R}$ we have
$
K_{\RV,\nu_\RV}(\alpha)
=
{K}_{\RV,\nu_\RV}^M(\alpha).
$
\end{prop}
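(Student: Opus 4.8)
The plan is to fix $\RV\in\diR\cap\lowb$ and a point $x=\pi_\RV(\tau)\in J_{\RV,F}$ (recall that RBSC, Definition \ref{def RBSCF}, makes $\pi_\RV$ a bijection on $\CF$) and to prove that the local limit $\lim_{r\to+0}\log\nu_\RV(B(x,r))/\log r$ and the Markov limit defining $K^M_{\RV,\nu_\RV}(\alpha)$ exist and equal a common value $\alpha$ simultaneously; this pointwise equivalence is exactly the asserted set equality. The entire argument amounts to comparing, scale by scale, the Euclidean ball $B(x,r)$ with the nested decreasing cylinder neighbourhoods $\phi_{\tau|_n,\RV}(X_{t(\tau|_n)})$ of $x$, whose diameters tend to $0$.

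First I would record the relevant scales and their regularity. Put $d_n:=\|\phi'_{\tau|_n,\RV}\|_\infty$. By Lemma \ref{lemma diameter condition} the diameter of $\phi_{\tau|_n,\RV}(X_{t(\tau|_n)})$ is comparable to $d_n$ up to $D_\Phi$, and by Lemma \ref{lemma bounded distortion} the derivative appearing in the denominator of the Markov ratio is comparable to $d_n$ up to $K_{bd}$. The key point is that consecutive scales are comparable: combining Lemma \ref{lemma bounded distortion} with the uniform upper bound $\cc$ and the normality lower bound $\|\phi'_{\tau_n,\RV_n}\|_\infty\ge M_{\tau_n}\ge\min_{e\in F}M_e=:s_{\min}>0$ gives $K_{bd}^{-1}s_{\min}\le d_{n+1}/d_n\le\cc<1$. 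Here the finiteness of $F$ and the hypothesis $\RV\in\lowb$ are used to guarantee $s_{\min}>0$. In particular $d_n\to0$ geometrically and $\log d_{n+1}=\log d_n+O(1)$ uniformly in $n$ and $\tau$.

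Next I would establish two sandwiching inclusions. Given small $r$, let $n=n(r)$ be the largest integer with $\mathrm{diam}(\phi_{\tau|_n,\RV}(X_{t(\tau|_n)}))\ge r$; since $\phi_{\tau|_{n+1},\RV}(X_{t(\tau|_{n+1})})$ then has diameter $<r$ and contains $x$, it lies in $B(x,r)$, giving $\nu_\RV(\phi_{\tau|_{n+1},\RV}(X_{t(\tau|_{n+1})}))\le\nu_\RV(B(x,r))$. For the reverse inclusion I would use RBSC: the coding point of $\sigma^n\tau$ sits in a first-level image inside $X_{t(\tau|_n)}$, hence at distance at least $\OSC$ from $\partial X_{t(\tau|_n)}$; pushing this separation forward by $\phi_{\tau|_n,\RV}$ via the inner-ball inclusion of Lemma \ref{lemma boll conparability} shows $B(x,K_{bd}^{-1}\OSC\,d_m)\subset\phi_{\tau|_m,\RV}(X_{t(\tau|_m)})$ for every $m$. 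Because $r\asymp d_{n(r)}$ and $d_m/d_{n(r)}\ge\cc^{-(n(r)-m)}$, there is a fixed integer $k_0$, depending only on $\OSC,K_{bd},D_\Phi,\cc$, for which $m:=n(r)-k_0$ is admissible, yielding $\nu_\RV(B(x,r))\le\nu_\RV(\phi_{\tau|_{n(r)-k_0},\RV}(X_{t(\tau|_{n(r)-k_0})}))$. Taking logarithms, dividing by the negative quantity $\log r=\log d_{n(r)}+O(1)$, and noting that the denominators $\log|\phi'_{\tau|_n,\RV}(\cdot)|=\log d_n+O(1)$ at the nearby levels $n(r)+1$ and $n(r)-k_0$ all agree up to additive constants that vanish once divided by $\log r\to-\infty$, the sandwich becomes a squeeze of $\log\nu_\RV(B(x,r))/\log r$ between the Markov ratios at those two levels. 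A routine $\limsup/\liminf$ comparison (choosing $r=\mathrm{diam}(\phi_{\tau|_n,\RV}(X_{t(\tau|_n)}))$ for the reverse direction) then gives the pointwise equivalence of the two limits, i.e. $K_{\RV,\nu_\RV}(\alpha)=K^M_{\RV,\nu_\RV}(\alpha)$.

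I expect the main obstacle to be the upper inclusion together with the uniformity of the index shift $k_0$: one must exploit the boundary separation $\OSC>0$ (available for finite $F$ by RBSC) to confine the whole ball $B(x,r)$ to a single cylinder, and at the same time keep the cylinder level $m$ within a bounded distance of $n(r)$ independently of $\tau$ and $r$. This uniformity is exactly what the per-step contraction bound $s_{\min}=\min_{e\in F}M_e>0$ provides, so it is here that both the finiteness of $F$ and the hypothesis $\RV\in\lowb$ are indispensable; by contrast the lower inclusion and the final logarithmic squeeze are routine once the geometric regularity of the scales $d_n$ has been established.
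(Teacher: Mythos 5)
Your argument is correct and follows essentially the same route as the paper's proof: both sandwich $B(x,r)$ between a cylinder $\phi_{\tau|_{l+1},\RV}(X_{t(\tau|_l)})$ of diameter just below $r$ (via Lemma \ref{lemma diameter condition}) and a cylinder at a nearby level containing $B(x,r)$ obtained from the RBSC separation $\OSC$ and the inner-ball inclusion of Lemma \ref{lemma boll conparability}, and both invoke the per-step lower bound $\min_{e\in F}M_e>0$ (finiteness of $F$ plus $\RV\in\lowb$) at exactly the same point to keep the two cylinder levels within a bounded gap so that the logarithmic ratios squeeze to a common limit. No substantive difference from the paper's argument.
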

\begin{proof}
    Let $\RV\in \diR\cap \lowb
    $
    and let $\alpha\in \mathbb{R}$. We first show ${K}_{\RV,\nu_\RV}^M(\alpha)\subset K_{\RV,\nu_\RV}(\alpha)$. Let $x\in {K}_{\RV,\nu_\RV}^M(\alpha)$.
    Then there exists the unique $\tau\in \CF$ such that $\pi_{\RV}(\tau)=x$.
    Let $0<r<\OSC D_{\Phi} \|\phi_{\tau_0,\RV}\|_\infty$.  We will denote by $j(x,r)$ the unique natural number satisfying 
    \begin{align*}
       & \frac{\OSC}{K_{bd}}
    \left|
    \phi_{\tau|_{j(x,r)+1},\RV}'
    \left(\pi_{\RV_{j(x,r)+2}}
    \left(\sigma^{j(x,r)+2}
    \left(\tau
    \right)
    \right)
    \right)\right|
    \leq 
    r
    \\&<
    \frac{\OSC}{K_{bd}}
\left|\phi_{\tau|_{j(x,r)},\RV}'
    \left(\pi_{\RV_{j(x,r)+1}}
    \left(\sigma^{j(x,r)+1}
    \left(\tau
    \right)
    \right)
    \right)
    \right|.
    \end{align*}
    Note that, since 
    $\Phi$ has RBSC, we have
    $\text{dist}\left(\pi_{\RV_{j(x,r)+1}}\left(\sigma^{j(x,r)+1}(\tau)\right), \partial W_{t\left(\tau_{j(x,r)}\right)}\right)\geq \OSC.$    
    Therefore, by Lemma \ref{lemma boll conparability} and the definition of $j(x,r)$, we obtain
    \begin{align}\label{eq proof of pesin boll zero}
        &B(x,r)
        \subset 
        \phi_{\tau|_{j(x,r)},\RV}
        \left(
        B\left(
        \pi_{\RV_{j(x,r)+1}}\left(\sigma^{j(x,r)+1}(\tau)\right)
        ,\OSC
        \right)
        \right).
    \end{align}
On the other hand, there exists $l(x,r)\in\mathbb{N}$ such that 
$
D_{\Phi}\|\phi_{\tau|_{l(x,r)+1},\RV}'\|_\infty
< r\leq
D_{\Phi}\|\phi_{\tau|_{l(x,r)},\RV}'\|_\infty.
$
By Lemma \ref{lemma diameter condition}, we have 
\begin{align}\label{eq proof of markov and local lower}
        \phi_{\tau|_{l(x,r)+1},\RV}
\left(
X_{t\left(\tau|_{l(x,r)}\right)}\right)\subset B(x,r).
\end{align}
Since $F$ is a finite set, we have $\min\{M_e:e\in F\}>0$. Hence, for all $\RV'\in\lowb$, $e\in F$ and $x\in X_{t(e)}$ we have $|\phi'_{e,\RV'}(x)|\geq \min\{M_e:e\in F\}>0$. 
Therefore, by (\ref{eq proof of pesin boll zero}) and (\ref{eq proof of markov and local lower}), we obtain $x\in K_{\RV,\nu_\RV}(\alpha)$ and thus, ${K}_{\RV,\nu_\RV}^M(\alpha)\subset K_{\RV,\nu_\RV}(\alpha)$. By similar argument, we can also show that $K_{\RV,\nu_\RV}(\alpha)\subset  K_{\RV,\nu_\RV}^M(\alpha)$.
\end{proof}

For a RCGDMS $\Phi$ and a subset $F$ of $E$ such that $\CF$ is finitely primitive we define ${Fin}_F:=\{s\in\mathbb{R}:s\zeta\text{ is summable}\}
$,
\[
s_{\infty,F}:=\inf\{s\in\mathbb{R}:s\zeta\text{ is summable}\}
\text{ and }
Fin^\circ_F:=(s_{\infty,F},\infty)
\]
Note that we have $Fin^\circ_F= Fin_F$ or $ Fin_F\setminus Fin^\circ_F=\{s_\infty\}$ and if $E=F$ then we have $ Fin= Fin_E$, $s_{\infty}=s_{\infty,E}$ and $Fin^\circ=Fin^\circ_E$. We define the following set (see (\ref{eq definision of fin}) for the definition of $\RS_{b,s\zeta_F}$ for $s\in  Fin_F$):
\begin{align*}
\finFPhi:=
    \left\{
 \begin{array}{cc}
  \bigcap_{n\in\mathbb{N}}\RS_{b,(s_{\infty,F}+1/n)\zeta_F}  & \text{if}\ s_{\infty,F}\notin Fin_F\cup\{-\infty\}\\
   \RS_{b,s_{\infty,F}\zeta_F}   & \text{if}\ s_{\infty,F}\in Fin_F\\
  \bigcap_{n\in\mathbb{N}}\RS_{b,-n\zeta_F}   & \text{if }s_{\infty,F}=-\infty 
 \end{array}
 \right.     
\end{align*}
Since for all $\tau\in \CF$ and $\RV\in\RS$ we have $\zeta_F(\tau,\RV)<0$, for all $s<s'$ and $\RV\in\RS$ we have
\begin{align}\label{eq monotonicity}
M_{s\zeta_F}(\RV)\geq M_{s'\zeta_F}(\RV) \text{ and } \underline{M}_{s\zeta_F}(\RV)\geq \underline{M}_{s'\zeta_F}(\RV). 
\end{align}
Therefore, for all $\RV\in \finFPhi\cap \lowb$ and $s\in Fin$ we have $0<\underline{M}_{s\zeta_F}(\RV)\leq M_{s\zeta_F}(\RV)<\infty$.  
We define the function 
$p_F:\mathbb{R}\rightarrow\mathbb{R}$ by 
\begin{align*}
    p_F(s):
    =\left\{
 \begin{array}{cc}
   {P_F(s\zeta_F)}   & \text{if}\  s\in{Fin}_F\\
   \infty   & \text{otherwise } 
 \end{array}
 \right. .
\end{align*}
If $E=F$ then we have $p=p_F$.

\begin{lemma}\label{lemma convexity of the toporogical pressure}
Let $\Phi$ be a normal RCGDMS and let $F$ be a subset of $E$ such that $\CF$ is finitely primitive. Then the  function $p_F$ is convex on $Fin$. Moreover,
there exists $\preintervalF'\subset \RS$ such that $\RP\left(\preintervalF'\right)=1$, 
    and for all  $\RV\in\preintervalF'$, $s\in  Fin_F$ and $e\in F$ we have
    \[
    P(s\zeta_F)=\lim_{n\to\infty}\frac{1}{n}
    \log \mathfrak{L}_{n,e,F}^\RV(s\zeta_F).
    \]
\end{lemma}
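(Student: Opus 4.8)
The plan is to treat the two assertions separately and to note that both rest on a single structural fact: for each fixed $n$ and $e\in F$, the finite-level partition function is, as a function of $s$, a sum of exponentials, so its logarithm is convex in $s$.

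\textbf{Convexity of $p_F$.} I would fix $s_1,s_2\in Fin_F$ and $t\in[0,1]$ and set $s=ts_1+(1-t)s_2$. Since $s\zeta_F=t(s_1\zeta_F)+(1-t)(s_2\zeta_F)$ pointwise, one has $S_n(s\zeta_F)=tS_n(s_1\zeta_F)+(1-t)S_n(s_2\zeta_F)$, so taking suprema over cylinders and applying Hölder's inequality with exponents $1/t$ and $1/(1-t)$ to the defining sum gives
\[
Z_{n,e,F}^{\RV}(s\zeta_F)\leq \bigl(Z_{n,e,F}^{\RV}(s_1\zeta_F)\bigr)^{t}\bigl(Z_{n,e,F}^{\RV}(s_2\zeta_F)\bigr)^{1-t}.
\]
Applying $\frac1n\log(\cdot)$ and letting $n\to\infty$ for a $\RV$ in the finite intersection of the full-measure sets of Proposition \ref{prop definition of the pressure} for $s_1,s_2,s$, and invoking Lemma \ref{lemma several form of prressure}, yields $p_F(s)\leq t\,p_F(s_1)+(1-t)\,p_F(s_2)$. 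Here $p_F$ is finite on $Fin_F$ because $s\zeta_F\in H_{\exponent,\RS}^{1,s}(\CF)\subset H_{\exponent,\RS}^{ns}(\CF)$ for $s\in Fin_F$, so $p_F$ is a finite convex, hence continuous, function on the interval $Fin_F$.

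\textbf{Setting up a single good set.} The crucial observation is that writing out $\mathfrak{L}_{n,e,F}^{\RV}(s\zeta_F)=\sum_{a}\exp\!\bigl(s\,S_n\zeta_F(a\xi_e,\RV)\bigr)$, the sum being over $a\in F_A^n$ with $a_0=e$ and $A_{a_{n-1},(\xi_e)_0}=1$, exhibits $G_{n,e}^{\RV}(s):=\frac1n\log\mathfrak{L}_{n,e,F}^{\RV}(s\zeta_F)$ as a convex function of $s$ (the log-sum-exp inequality, i.e. Hölder again). I would then fix a countable dense subset $D\subset Fin^\circ_F$, adjoining the point $s_{\infty,F}$ when $s_{\infty,F}\in Fin_F$, and set
\[
\preintervalF':=\dis\cap\cb\cap\lowb\cap\bigcap_{s\in D}\bigl(\RS_{p,s\zeta_F}\cap\RS_{b,s\zeta_F}\bigr).
\]
Every set in this countable intersection is $\RD$-invariant and of full measure (by Proposition \ref{prop definition of the pressure} and normality), so $\RP(\preintervalF')=1$ and $\preintervalF'$ is $\RD$-invariant. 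By Proposition \ref{prop definition of the pressure} (equivalently the final assertion of Lemma \ref{lemma several form of prressure}), for every $\RV\in\preintervalF'$ and $s\in D$ we have $G_{n,e}^{\RV}(s)\to p_F(s)$ as $n\to\infty$.

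\textbf{Upgrading from $D$ to all of $Fin_F$.} It remains to pass from the dense set $D$ to every $s$, and here I would invoke the standard fact that convex functions converging pointwise on a dense subset of an open interval to a continuous convex limit converge pointwise throughout the interior. Concretely, for $s\in Fin^\circ_F$ and $a,b\in D$ with $a<s<b$ the chord estimate $G_{n,e}^{\RV}(s)\leq\frac{b-s}{b-a}G_{n,e}^{\RV}(a)+\frac{s-a}{b-a}G_{n,e}^{\RV}(b)$ gives $\limsup_n G_{n,e}^{\RV}(s)\leq p_F(s)$ after letting $a\uparrow s$, $b\downarrow s$ and using continuity of $p_F$; the matching lower bound follows from the monotonicity of the difference quotients of the convex functions $G_{n,e}^{\RV}$ together with the finiteness of the one-sided derivatives of $p_F$. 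This gives $\frac1n\log\mathfrak{L}_{n,e,F}^{\RV}(s\zeta_F)\to p_F(s)=P_F(s\zeta_F)$ for all $s\in Fin^\circ_F$, and the endpoint $s_{\infty,F}$, when it belongs to $Fin_F$, is covered directly since it was placed in $D$. I expect the main obstacle to be exactly the uniformity in $s$: Proposition \ref{prop definition of the pressure} yields, for each individual potential $s\zeta_F$, a full-measure set depending a priori on $s$, and there are uncountably many $s$. The device that overcomes this is the joint convexity in $s$ of the finite-level quantities $G_{n,e}^{\RV}$, which lets one countable (hence full-measure) choice of good $\RV$ govern every $s$ at once; establishing convexity and continuity of the limit $p_F$ in the first part is precisely what makes this convexity argument rigorous.
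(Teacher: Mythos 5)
Your proposal is correct and follows essentially the same route as the paper: convexity of $p_F$ via H\"older, a countable dense subset of $Fin_F$ to obtain a single full-measure set from Proposition \ref{prop definition of the pressure}, and the convexity in $s$ of the finite-level functions $s\mapsto\frac1n\log\mathfrak{L}_{n,e,F}^{\RV}(s\zeta_F)$ to upgrade pointwise convergence from the dense set to all of $Fin_F$. The only cosmetic difference is that the paper invokes \cite[Theorem 10.8]{rockafellar1997convex} for the dense-convergence-of-convex-functions step (and covers the endpoint $s_{\infty,F}$ by working on slightly larger intervals $C_k$), whereas you spell out the chord estimates by hand and adjoin $s_{\infty,F}$ to the dense set directly; both are valid.
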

\begin{proof}
The convexity of the function $p_F$ on $Fin$ follows from H\"older's inequality. We will show the second half of the claim in this lemma. For $k\in\mathbb{N}$ we set $C_k:=(s_\infty-1/k, k)$ if $s_\infty\in \mathbb{R}$ and $C_k:=(-k,k)$ otherwise. For all $k\in\mathbb{N}$ we set $C_k':=C_k\cap\mathbb{Q}$. Since $C_k'$, $F$ and $\mathbb{N}$ are countable sets, for all $k\in\mathbb{N}$ 
     there exists $\RS_k\subset \finFPhi\cap \lowb$ such that $\RP(\RS_k)=1$ 
    and for all $\RV\in \RS_{k}$, $s\in C_k'$ and $e\in F$ we have
$    p_F(s)=\lim_{n\to\infty}({1}/{n})\log\mathfrak{L}_{n,e,F}^\RV(s\zeta_F)
    .$
    We fix $k\in\mathbb{N}$, $\RV\in \RS_k$ and $e\in F$.
    Note that for all $n\in\mathbb{N}$ and $s\in Fin$ we have
    $0<\prod_{i=0}^{n-1} \underline{M}_{s\zeta_F}(\RD^{i}(\RV))\leq\mathfrak{L}_{n,e,F}^\RV(s\zeta_F)\leq \prod_{i=0}^{n-1} {M}_{s\zeta_F}(\RD^i(\RV))<\infty$    
    and 
    the function  
    $s\in C_k\mapsto ({1}/{n})\log\mathfrak{L}_{n,e,F}^\RV(s\zeta_F)$ is convex. By \cite[Theorem 10.8]{rockafellar1997convex}, for all $s\in C_k$ the limit
    $\lim_{n\to\infty}({1}/{n})\log\mathfrak{L}_{n,e,F}^\RV(s\zeta_F)$
    exists and the sequence $\{s\mapsto({1}/{n})\log\mathfrak{L}_{n,e,F}^\RV(s\zeta_F)\}_{n\in\mathbb{N}}$
    of functions on $C_k$ converges uniformly on each closed subset of $C_k$. Therefore, since $p_F$ is continuous on $Fin$, if we set $\preintervalF':=\bigcap_{k\in\mathbb{N}}\RS_k$ then $\preintervalF'$ satisfies desired conditions.
\end{proof}

Let $\Phi$ be a normal RCGDMS and let $F$ be a subset of $E$ such that $\CF$ is finitely primitive. For the full-measure set $\preintervalF'$ obtained in Lemma \ref{lemma convexity of the toporogical pressure} we set
\[
\preintervalF:=\bigcap_{k\in\mathbb{Z}}\RD^{k}\left(\preintervalF'\right).
\]
Since $\RD$ is invertible and $\RP$-preserving, we have $\RP(\preintervalF)=1$.
Since for all $\RV\in \lowb$ and $s\in\mathbb{R}$ we have $\underline{M}_{s\zeta_F}(\RV)\geq \min\{\cc^s,\min\{M_e^s:e\in\Lambda^F_a\}\}$, there exists $\tailR\subset\RS$
 such that $\RP\left(\tailR\right)=1$, 
    and for all $k\in\mathbb{Z}$, $\RV\in\tailR$ and $s\in  Fin_F$ we have
\begin{align}\label{eq tail zero}
    \lim_{n\to\infty}\frac{1}{n}\log M_{s\zeta_F}(\RV_{k+n})=0.
\end{align}
     If $F=E$ then we will write $\preinterval=\preintervalF$ and $\tailRE=\tailR$.

For a finite subset $F$ of $E$ such that $\CF$ is finitely primitive and all $s\in \mathbb{R}$ we will denote by $\{\widetilde m_{F,\RV}^s\}_{\RV\in\RS}$ random conformal measures with respect to the potential $s\zeta_F$ obtained in Theorem \ref{thm conformal measure}
and for all $\RV\in\RS$ we define the Borel probability measure $m_{F,\RV}^s$ on $J_{\RV,F}$ by $m_{F,\RV}^s:=\widetilde m_{F,\RV}^s\circ \pi_{\RV,F}^{-1}$. For all $n\in\mathbb{N}$ and $\RV\in\RS$ we set $P_{s\zeta_F}^n(\RV):=
\sum_{i=0}^{n-1}P_{s\zeta_F,{\widetilde m_{F,\RV_{i+1}}^s}}(\RV_i)
$ (see (\ref{eq abstruct eigenvalue})).

\begin{prop}\label{prop lyapunov spectrum and markov dimension}
    Let $\Phi$ be a normal RCGDMS satisfying  RBSC and let $F$ be a finite subset of $E$ such that  $\CF$ is finitely primitive. Then for all 
    $\RV\in \diR\cap\lowb\cap\finFPhi\cap\preintervalF\cap\tailR$,
    $s\in \mathbb{R}$ and $\beta>0$ we have 
\begin{align*}
     L_{\RV,F}(\beta)=
   K_{\RV,\mFs}^M\left(s+\frac{p_F(s)}{\beta}\right).
\end{align*}
   
\end{prop}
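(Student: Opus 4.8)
The plan is to translate both sides of the claimed identity into statements about the Birkhoff sums $S_n\zeta(\tau,\RV)$ and then match them via the Gibbs property of $\widetilde m_{F,\RV}^s$. Since $\Phi$ satisfies RBSC, the coding map $\pi_{\RV,F}$ is a bijection, so every $x\in J_{\RV,F}$ has a unique code $\tau\in\CF$; applying the chain rule to the expanding map $T_{\RV,\Phi}$ and recalling the definition of $\zeta$ gives $\log|(T_{\RV,\Phi}^n)'(x)|=-S_n\zeta(\tau,\RV)$, so that $x\in L_{\RV,F}(\beta)$ if and only if $\frac1n S_n\zeta(\tau,\RV)\to-\beta$. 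First I would record that, because $F$ is finite and $\Phi$ is normal, the averages $c_n:=\frac1n S_n\zeta(\tau,\RV)$ all lie in a fixed compact interval $I\subset(-\infty,0)$ (with endpoints $\log\min_{e\in F}M_e$ and $\log\cc$); this boundedness away from $0$ is what will drive the reverse inclusion.

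Next I would estimate the numerator and denominator of the ratio defining $K_{\RV,\mFs}^M$. For the denominator, Lemma \ref{lemma bounded distortion} together with Lemma \ref{lemma Holder continuity of the geometry potential} yields a constant, independent of $n$ and $\tau$, with $|\log|\phi_{\tau|_n,\RV}'(\pi_{\RV_{n+1}}(\sigma^{n+1}(\tau)))|-S_n\zeta(\tau,\RV)|=O(1)$, so after division by $n$ the denominator is asymptotic to $c_n$. For the numerator I would first use RBSC — exactly as in the proof of Proposition \ref{prop markov dimension and local dimension}, via the uniform separation $\OSC>0$ and bijectivity of $\pi_{\RV,F}$ — to identify $\mFs(\phi_{\tau|_n,\RV}(X_{t(\tau|_n)}))$ with $\widetilde m_{F,\RV}^s([\tau|_n]_F)$ up to a bounded factor, and then apply the Gibbs estimate of Lemma \ref{lemma gibbs} to $f=s\zeta_F$ (legitimate since $\RV\in\diR\cap\lowb\cap\finFPhi$ forces $0<\underline M_{s\zeta_F}\le M_{s\zeta_F}<\infty$). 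This gives $\log\widetilde m_{F,\RV}^s([\tau|_n]_F)=s\,S_n\zeta(\tau,\RV)-P_{s\zeta_F}^n(\RV)+R_n$ with $\frac1n R_n\to0$, where membership in $\tailR$ is precisely what forces $\frac1n\log\prod_{i}M_{s\zeta_F}(\RV_{n+i})\to0$. Finally, Lemma \ref{lemma pointwise pressure and global pressure}, whose hypotheses hold on $\preintervalF\cap\tailR$ for every $s$ by Lemma \ref{lemma convexity of the toporogical pressure}, gives $\frac1n P_{s\zeta_F}^n(\RV)\to p_F(s)$.

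Putting these together, the ratio defining $K_{\RV,\mFs}^M$ equals $\big(s c_n-\frac1n P_{s\zeta_F}^n(\RV)+o(1)\big)/(c_n+o(1))$, and since $c_n$ stays in the compact set $I$ away from $0$ this differs by $o(1)$ from $h(c_n):=s-p_F(s)/c_n$. The forward inclusion is then immediate: if $c_n\to-\beta$ then the ratio tends to $h(-\beta)=s+p_F(s)/\beta$, so $L_{\RV,F}(\beta)\subseteq K_{\RV,\mFs}^M(s+p_F(s)/\beta)$. For the reverse inclusion I would argue along subsequences: if $x\in K_{\RV,\mFs}^M(s+p_F(s)/\beta)$, then every subsequential limit $c^*$ of $(c_n)\subset I$ satisfies $h(c^*)=s+p_F(s)/\beta=h(-\beta)$; because $h'(c)=p_F(s)/c^2$ has constant sign, $h$ is a strictly monotone homeomorphism of $I$, forcing $c^*=-\beta$ and hence $c_n\to-\beta$, i.e. $x\in L_{\RV,F}(\beta)$.

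I expect the reverse inclusion to be the crux. The hypothesis $x\in K_{\RV,\mFs}^M$ only asserts convergence of the ratio, not of the Birkhoff averages $c_n$, so the Lyapunov exponent cannot simply be read off; the compactness of the range $I$ (which rests on normality through $\lowb$ and on the finiteness of $F$) together with the strict monotonicity of $h$ is what bridges this gap, and it is here that $p_F(s)\neq0$ is genuinely used — the single value of $s$ with $p_F(s)=0$, where $h$ degenerates to a constant, has to be addressed on its own. A secondary point to handle carefully is the identification of the geometric-cylinder measure $\mFs(\phi_{\tau|_n,\RV}(X_{t(\tau|_n)}))$ with the symbolic-cylinder measure $\widetilde m_{F,\RV}^s([\tau|_n]_F)$, which is precisely where RBSC enters.
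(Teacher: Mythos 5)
Your argument follows the same route as the paper's proof: identify $x\in L_{\RV,F}(\beta)$ with the condition $\frac1n S_n\zeta_F(\tau,\RV)\to-\beta$ via bijectivity of $\pi_{\RV,F}$ under RBSC, convert the numerator of the $K^M$-ratio through the Gibbs estimate of Lemma \ref{lemma gibbs} together with membership in $\tailR$, and invoke Lemma \ref{lemma pointwise pressure and global pressure} to replace $\frac1n P^n_{s\zeta_F}(\RV)$ by $p_F(s)$. The paper's own proof stops at the identity of the two limits ``if the limit exists'' and declares the result; your treatment of the reverse inclusion --- passing to subsequential limits of $c_n=\frac1n S_n\zeta_F(\tau,\RV)$ inside a compact interval $I\subset(-\infty,0)$ (which uses normality through $\lowb$ and finiteness of $F$) and exploiting strict monotonicity of $c\mapsto s-p_F(s)/c$ --- supplies precisely the justification the paper omits for why convergence of the ratio forces convergence of the Birkhoff averages. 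That part of your argument is correct.

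The degenerate case you flag is not a loose end you failed to close: it is a genuine defect of the statement. For $F$ finite, $p_F$ is a real-analytic, strictly decreasing surjection of $\mathbb{R}$ by Lemma \ref{lemma basic properties of the pressure}, so there is a unique $s^*$ with $p_F(s^*)=0$; for this $s^*$ your own computation shows the ratio tends to $s^*$ for \emph{every} $\tau\in\CF$, since $c_n$ stays away from $0$ and the correction terms are $o(n)$. Hence $K^M_{\RV,m^{s^*}_{F,\RV}}(s^*)=J_{\RV,F}$, while $L_{\RV,F}(\beta)\neq J_{\RV,F}$ for all but at most one $\beta>0$, so the asserted equality fails at $s=s^*$ and only the inclusion $L_{\RV,F}(\beta)\subset K^M_{\RV,m^{s^*}_{F,\RV}}(s^*)$ survives there. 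The paper's proof overlooks this as well. The damage is contained: the proposition is invoked in Theorem \ref{thm multifractal analysis for lyapunov compact} only with $s=0$, and $p_F(0)=P_F(0)\geq\log 2/(1+N_{A,F})>0$ for a finitely primitive $\CF$ with $\#F\geq 2$, so your argument (and the paper's result) is sound exactly where it is needed; the clean fix is to restrict the proposition to $s$ with $p_F(s)\neq 0$.
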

\begin{proof}
Let     $\RV\in \diR\cap\lowb\cap\finFPhi\cap\preintervalF\cap\tailR$ and let
    $s\in \mathbb{R}$. 
    Note that, since $\Phi$ satisfy RBSC, $\pi_\RV$ is bijection. Thus, by Lemma \ref{lemma gibbs} and (\ref{eq tail zero}), for all $\tau\in \CF$ we obtain
    \begin{align*}
\lim_{n\to\infty}
\frac{\log\mFs\left(\phi_{\tau|_n,\RV}\left(X_{t\left(\tau|_n\right)}\right)\right)}{\log\left|\phi_{\tau|_n,\RV}'(\pi_{\RV_{n+1}}(\sigma^{n+1}(\tau)))\right|}
=\lim_{n\to\infty}
\left(s+
\frac{(1/n) P_{s\zeta_F}^n(\RV)}{-(1/n)S_n\zeta_F(\tau,\RV)}
\right).
    \end{align*}
    if the limit exists. On the other hand, by Lemma \ref{lemma pointwise pressure and global pressure}, for all $s\in \mathbb{R}$ we have
    $\lim_{n\to\infty}({1}/{n})P_{s\zeta_F}^n(\RV)=p_F(s). $ 
    Hence, we obtain the desired result.
\end{proof}

\begin{lemma}\label{lemma basic properties of the pressure}
    Let $\Phi$ be a normal RCGDMS and let $F$ be a subset of $E$ such that $\CF$ is finitely primitive. 
    Then we have the following conditions:\\
 (P1) For $F\subset \tilde F$ such that $\tilde F_A^\infty$ is finitely primitive and $s\in\mathbb{R}$ we have $p_F(s)\leq p_{\tilde F}(s)$.\\
(P2) If $s_{\infty,F}\in\mathbb{R}$ then $p_F$ is right-continuous at $s_{\infty,F}$.\\
(P3)  $p_F$ is strictly decreasing on ${Fin}_F$.\\
(P4) We have
$\lim_{s\to\infty}p_F(s)=-\infty$
 and if $s_{\infty,F}=-\infty$ then
$\lim_{s\to-\infty}p_F(s)=\infty$.    
(P5) If $F$ is a finite set then $p_F$ is real-analytic on $\mathbb{R}$
\end{lemma}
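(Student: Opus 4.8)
The plan is to dispatch the five items by exploiting three ingredients: the identity $p_F(s)=P_F(s\zeta_F)=\lim_{n\to\infty}\tfrac1n\log A_{n,F}^\RV(s\zeta_F)$ valid for $\RP$-a.e. $\RV$ and $s\in Fin_F$ (Lemma~\ref{lemma several form of prressure}), the variational principle (Theorem~\ref{thm variational principle}), and the elementary bound $\zeta_F(\tau,\RV)\le\log\cc<0$ coming from the common contraction ratio. \emph{Items (P1), (P3) and the first half of (P4)} reduce to monotonicity of the partition functions. For (P1), since $F\subset\tilde F$ gives $\CF\subset\tilde F_A^\infty$ and $\zeta_{\tilde F}$ restricts to $\zeta_F$ on $\CF$, each cylinder sum in $A_{n,F}^\RV(s\zeta_F)$ is enlarged (both over words and over cylinders) by the one for $A_{n,\tilde F}^\RV(s\zeta_{\tilde F})$; taking $\tfrac1n\log(\cdot)$ on a common full-measure set and letting $n\to\infty$ gives $p_F(s)\le p_{\tilde F}(s)$ on $Fin_{\tilde F}\subset Fin_F$, the remaining case being trivial since then $p_{\tilde F}(s)=\infty$. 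For (P3) and (P4)(i) I would use $S_n\zeta_F\le n\log\cc$ on every $n$-cylinder to obtain, for $s<s'$ in $Fin_F$,
\[
A_{n,F}^\RV(s'\zeta_F)\le e^{(s'-s)n\log\cc}\,A_{n,F}^\RV(s\zeta_F),
\]
hence $p_F(s')\le p_F(s)+(s'-s)\log\cc$; as $\log\cc<0$ this is strict monotonicity (P3), and fixing $s_0\in Fin_F$ and letting $s\to\infty$ forces $p_F(s)\to-\infty$ (P4)(i).

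For \emph{(P2)}, convexity of $p_F$ on $Fin_F$ (Lemma~\ref{lemma convexity of the toporogical pressure}) yields upper semicontinuity from the right: for fixed $c\in Fin_F$ and $s=\lambda s_{\infty,F}+(1-\lambda)c\downarrow s_{\infty,F}$ one has $p_F(s)\le\lambda p_F(s_{\infty,F})+(1-\lambda)p_F(c)$, so $\limsup_{s\downarrow s_{\infty,F}}p_F(s)\le p_F(s_{\infty,F})$ whenever the latter is finite. For the reverse bound I would write $p_F(s)=\sup_{\widetilde\mu}\big(h^r_{\widetilde\mu}(\sigma\times\RD)+s\int\zeta_F\,d\widetilde\mu\big)$, which displays $p_F$ as a supremum of affine functions of $s$, hence lower semicontinuous; since $s_{\infty,F}\zeta_F\in H_{\exponent,\RS}^{ns}(\CF)$ in the summable case (normality gives $\int\log\underline{M}_{s_{\infty,F}\zeta_F}\,d\RP>-\infty$), the principle holds at $s_{\infty,F}$ as well and $\liminf_{s\downarrow s_{\infty,F}}p_F(s)\ge p_F(s_{\infty,F})$, giving right-continuity when $s_{\infty,F}\in Fin_F$. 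When $s_{\infty,F}\notin Fin_F$ one has $p_F(s_{\infty,F})=\infty$, and I would recover $\lim_{s\downarrow s_{\infty,F}}p_F(s)=\infty$ from the lower bound used in the proof of Proposition~\ref{prop compact apploximation of the pressure},
\[
p_F(s)\ge\frac{1}{1+N_{A,F}}\Big(\int\log M_{s\zeta_F}\,d\RP-2\log\bd-N_{A,F}\K\Big),
\]
whose right-hand side diverges because $\int\log M_{s\zeta_F}\,d\RP\uparrow\infty$ by monotone convergence while the constants stay bounded over a bounded right-neighbourhood of $s_{\infty,F}$. For the second half of (P4), namely $\lim_{s\to-\infty}p_F(s)=\infty$ when $s_{\infty,F}=-\infty$, the quantity $\int\log M_{s\zeta_F}\,d\RP$ competes with the $s$-dependent constants, so instead I would fix a finite $F'\subset F$ with $F'^\infty_A$ finitely primitive and apply (P1): for any invariant $\widetilde\mu$ with $\int\zeta_{F'}\,d\widetilde\mu\in(-\infty,0)$ we get $p_F(s)\ge p_{F'}(s)\ge h^r_{\widetilde\mu}(\sigma\times\RD)+s\int\zeta_{F'}\,d\widetilde\mu\to\infty$ as $s\to-\infty$.

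For \emph{(P5)}, when $F$ is finite the shift $\CF$ is compact and normality bounds $|\phi'_{e,\RV}|$ below uniformly over the finite alphabet, so $\zeta_F$ is a bounded random locally H\"older potential and $s\mapsto s\zeta_F$ is an entire family of such potentials. I would prove real-analyticity of $s\mapsto p_F(s)=\int\log\lambda^*_{s\zeta_F}(\RV)\,d\RP(\RV)$ (Theorem~\ref{thm perron theorem}) by analytic perturbation of the random transfer-operator cocycle $\{\mathcal{L}_{s\zeta_F,\RV}\}$: the operators depend analytically on $s$, the fiberwise spectral gap of Theorem~\ref{thm perron theorem} persists on a complex neighbourhood of the real axis, the leading eigenvalue $\lambda^*_{s\zeta_F}(\RV)$ is then analytic with $s$-derivatives dominated uniformly in $\RV$ (via boundedness of $\zeta_F$ and bounded distortion), and differentiation under the integral sign transfers analyticity to $p_F$. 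I expect (P5) to be the main obstacle: establishing genuine real-analyticity in the fibered random setting requires controlling the analytic continuation and the uniform spectral gap of the operator cocycle across the fibers $\RV$, which is considerably more delicate than the convexity and monotonicity estimates driving (P1)--(P4); a secondary subtlety is the boundary case of (P2) with $s_{\infty,F}\notin Fin_F$, where divergence to $+\infty$ genuinely needs the quantitative lower bound rather than convexity alone.
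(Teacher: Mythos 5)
Your treatment of (P1)--(P4) is correct and runs essentially parallel to the paper's, with two mild variations. For (P3) and (P4) the paper obtains the slope bound $\frac{p_F(s)-p_F(s_0)}{s-s_0}\le\log\cc$ from the variational principle (Theorem \ref{thm variational principle}), whereas you derive the same inequality from the partition functions via $S_n\zeta_F\le n\log\cc$; both are valid, and your partition-function inequality already yields the second half of (P4) directly (for $t<s_0$ in $Fin_F$ it gives $p_F(t)\ge p_F(s_0)+(t-s_0)\log\cc\to\infty$, and when $s_{\infty,F}=-\infty$ every real is in $Fin_F$), so the detour through a fixed invariant measure on a finite subsystem is unnecessary, though harmless. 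For (P2) the paper simply combines (P1) with the compact approximation property (Proposition \ref{prop compact approximation geometric pressure}): $p_F$ is a supremum of continuous functions $p_{F'}$ over finite $F'\subset F$, hence lower semicontinuous, and monotonicity supplies the matching upper bound. Your two-sided argument --- monotonicity/convexity from above, the variational principle as a supremum of affine maps from below, and the quantitative lower bound extracted from the proof of Proposition \ref{prop compact apploximation of the pressure} when $s_{\infty,F}\notin Fin_F$ --- reaches the same conclusion with heavier machinery but no error; your observation that the non-summable endpoint needs a quantitative divergence argument rather than convexity alone is accurate.

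The one substantive shortfall is (P5). The program you describe --- analytic dependence of the transfer-operator cocycle on $s$, persistence of the fiberwise spectral gap on a complex neighbourhood, analyticity of the leading eigenvalue with control uniform in $\RV$, and differentiation under the integral --- is the right one, but you leave it unexecuted and correctly flag it as the main obstacle. In the paper this step is not reproved from scratch: it is reduced to verifying the hypotheses of \cite[Theorem 9.17]{mayer2011distance}, namely that for $\RV\in\diR\cap\lowb$ the map $z\mapsto z\zeta_F(\cdot,\RV)$ is a holomorphic family on the disc $D(s,1)$ with $\|(z\zeta_F)_\RV\|_\exponent\le(|s|+1)(\max\{-\log M_e:e\in F\}+v_\exponent(\zeta))$ and $\|\mathrm{Im}((z\zeta_F)_\RV)\|_\exponent\le|\mathrm{Im}(z)|(\max\{-\log M_e:e\in F\}+v_\exponent(\zeta))$, which uses exactly the finiteness of $F$ and the normality bound you isolate. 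So your proposal contains no wrong step, but as written it does not constitute a proof of (P5); the missing ingredient is the citation that carries the perturbation-theoretic load.
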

\begin{proof}
(P1) follows from Proposition \ref{prop definition of the pressure}. (P2) follows from (P1) and Proposition \ref{prop compact approximation geometric pressure}.
By Theorem \ref{thm variational principle}, for $s_0\in Fin^\circ_F$ and $s_{\infty,F}<t<s_0<s$ we obtain
\[
\frac{p_F(s)-p_F(s_0)}{s-s_0}\leq 
{\log \cc} \text{ and }
\frac{p_F(t)-p_F(s_0)}{t-s_0}\leq \log \cc
.
\]
(P3) and (P4) follow from above inequalities.
We will show (P5) by using \cite[Theorem 9.17]{mayer2011distance}. We assume that $F$ is a finite set.  Let $s\in\mathbb{R}$ and let $D(s,1):=\{z\in\mathbb{C}:|z-s|<1\}$. Then for all $\RV\in \diR\cap\lowb$ and $\tau\in \CF$, $z\mapsto z\zeta_F(\tau,\RV)$ is holomorphic on $D(s,1)$, for all $s_0\in (s-1,s+1)$ we have $s_0(\zeta_F)_\RV\in H_\exponent^b(\CF)$ and for all $z\in D(s,1)$ we have $\|(z\zeta_F)_\RV\|_\exponent\leq (|s|+1)(\max\{-\log M_e:e\in F\}+v_\exponent(\zeta))$. Moreover, for all $z\in D(s,1)$ we have $\|\text{Im}((z\zeta_F)_\RV)\|_\exponent\leq|\text{Im}(z)|(\max\{-\log M_e:e\in F\}+v_\exponent(\zeta))$. Thus, by noting that $\RP(\diR\cap\lowb)=1$ and using \cite[Theorem 9.17]{mayer2011distance}, we obtain (P5).
\end{proof}

Let $F$ be a finite subset of $E$ such that  $\CF$ is finitely primitive.
We define the function $p_{F,0}:\mathbb{R}^2\rightarrow \mathbb{R}$ by $p_{F,0}(q,b):=P_F(-qp_F(0)+b\zeta_F)$.
Then for all $q\in\mathbb{R}$ there exists a unique $T_{F}(q)\in\mathbb{R}$ such that $p_{F,0}(q,T_{F}(q))=0$. By Lemma \ref{lemma convexity of the toporogical pressure} and Lemma \ref{lemma basic properties of the pressure}, the function $q\mapsto T_{F}(q)$ is convex and real-analytic (for example see \cite[Chapter 6.1]{mayer2011distance}).
We define the concave Legendre transform 
    $T_{F}^*$ of $T_{F}$ by
    \[
    T_{F}^*(\alpha):=\inf\{\alpha q+T_{F}(q) :{q\in\mathbb{R}}\}\ 
    \text{for $\alpha\in\mathbb{R}$}.
    \] 
For a convex function $\phi$ on $\mathbb{R}$ we define
$
\phi'(+\infty)=\lim_{x\to\infty}\phi_+'(x)
$
 and 
$\phi'(-\infty)=\lim_{x\to-\infty}\phi_+'(x)$,
where $\phi_+'(x)$ denotes the right-hand derivative of $\phi$ at $x$.
 The following result is shown in \cite[Theorem 6.5]{mayer2011distance}. After that \cite[Theorem 2.6]{Zhihui} show the following result in a more general setting. 
\begin{thm}
\label{thm multifractal analysis for local dimension}
Let $\Phi$ be a normal RCGDMS $\Phi$ satisfying RBSC and let $F$ be a finite subset of $E$ such that $\CF$ is finitely primitive.  
    Then there exists $\loc\subset \RS$ such that $\RP(\loc)=1$ and for all $\RV\in\loc$ and $\alpha\in (-T_{F}'(+\infty),-T_{F}'(-\infty))$ we have
    $g_{\RV,m_{F,\RV}^0}(\alpha)=T_{F}^*(\alpha).$
\end{thm}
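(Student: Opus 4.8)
The plan is to identify the assertion as the multifractal formalism for the fiberwise conformal measure $m_{F,\RV}^0$ over the compact random subshift $\CF$, and to prove it by feeding the thermodynamic machinery developed above into the dimension estimates of \cite{mayer2011distance} and \cite{Zhihui}. Since $F$ is finite, $\Phi_F$ is a random conformal graph directed Markov system with finitely many edges, which is exactly the setting of those references. First I would pass from the local dimension spectrum to the Markov dimension spectrum: by Proposition \ref{prop markov dimension and local dimension}, on $\diR\cap\lowb$ one has $K_{\RV,m_{F,\RV}^0}(\alpha)=K_{\RV,m_{F,\RV}^0}^M(\alpha)$, so it suffices to compute $\dim_H K_{\RV,m_{F,\RV}^0}^M(\alpha)$. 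Combining the Gibbs property of $\widetilde m_{F,\RV}^0$ (Lemma \ref{lemma gibbs}) with $\lim_{n\to\infty}(1/n)P_{0\cdot\zeta_F}^n(\RV)=p_F(0)$ (Lemma \ref{lemma pointwise pressure and global pressure}) and the tail estimate (\ref{eq tail zero}), the defining ratio of the Markov level set collapses to the Lyapunov condition $-\lim_{n\to\infty}(1/n)S_n\zeta_F(\tau,\RV)=p_F(0)/\alpha$; this is precisely the identification in Proposition \ref{prop lyapunov spectrum and markov dimension} taken at $s=0$, valid on $\diR\cap\lowb\cap\finFPhi\cap\preintervalF\cap\tailR$.

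The core of the proof is to build auxiliary fiberwise equilibrium states realizing the Legendre transform. For each $q\in\mathbb{R}$ the potential $\psi_q:=-qp_F(0)+T_F(q)\zeta_F$ lies in $H_{\exponent,\RS}^{ns}(\CF)$ and satisfies $P_F(\psi_q)=0$ by the definition of $T_F$, so Theorem \ref{thm perron theorem} and Theorem \ref{thm variational principle} furnish a unique equilibrium state $\widetilde\rho_{\psi_q}\in\imsF$, which I push forward to $\mu_q:=(\widetilde\rho_{\psi_q})_\RV\circ\pi_{\RV,F}^{-1}$ on $J_{\RV,F}$. Differentiating $P_F(\psi_q)=0$ in $q$ and using that the derivative of the pressure in a direction equals the integral of that direction against the equilibrium state gives $T_F'(q)=p_F(0)\left(\int\zeta_F\,d\widetilde\rho_{\psi_q}\right)^{-1}$; hence, writing $\chi(q):=-\int\zeta_F\,d\widetilde\rho_{\psi_q}>0$, the typical local exponent of $\mu_q$ is $\alpha(q)=p_F(0)/\chi(q)=-T_F'(q)$. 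Inserting the variational identity $0=h_{\widetilde\rho_{\psi_q}}^r(\sigma\times\RD)+\int\psi_q\,d\widetilde\rho_{\psi_q}$ into the fiberwise entropy--dimension formula $\dim_H\mu_q=h_{\widetilde\rho_{\psi_q}}^r(\sigma\times\RD)/\chi(q)$ yields $\dim_H\mu_q=q\alpha(q)+T_F(q)=T_F^*(\alpha(q))$, the last equality because $\alpha(q)=-T_F'(q)$ realizes the infimum defining $T_F^*$.

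The two dimension bounds are then standard. For the lower bound, the relative Birkhoff ergodic theorem shows that $\mu_q$ is carried, up to a null set, by $K_{\RV,m_{F,\RV}^0}^M(\alpha(q))$, so the mass distribution principle gives $\dim_H K_{\RV,m_{F,\RV}^0}^M(\alpha(q))\geq\dim_H\mu_q=T_F^*(\alpha(q))$. By the convexity and real-analyticity of $T_F$ (Lemma \ref{lemma convexity of the toporogical pressure} and Lemma \ref{lemma basic properties of the pressure}(P5)), as $q$ runs over $\mathbb{R}$ the exponent $\alpha(q)=-T_F'(q)$ sweeps the open interval $(-T_F'(+\infty),-T_F'(-\infty))$. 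For the upper bound, for a fixed $\alpha$ I would cover the level set by the cylinder images $\phi_{\tau|_n,\RV}(X_{t(\tau|_n)})$ of comparable diameter and estimate the Hausdorff content using bounded distortion (Lemma \ref{lemma bounded distortion}), Lemma \ref{lemma diameter condition}, and the Gibbs/conformal structure of $\widetilde\rho_{\psi_q}$ together with $P_F(\psi_q)=0$, obtaining $\dim_H K_{\RV,m_{F,\RV}^0}^M(\alpha)\leq q\alpha+T_F(q)$ for every $q$ and hence $\leq T_F^*(\alpha)$.

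It remains to assemble the exceptional sets into a single $\loc$ with $\RP(\loc)=1$ working simultaneously for all $\alpha$ in the interval. Each almost-sure statement above holds on a full-measure set depending on $q$; I would intersect over rational $q$ with $\diR\cap\lowb\cap\finFPhi\cap\preintervalF\cap\tailR$ and the set of fiberwise Birkhoff-regular $\RV$, then fill in the remaining $\alpha$ using the continuity of $\alpha\mapsto T_F^*(\alpha)$ and the monotone dependence of the level sets. The main obstacle I anticipate is exactly this uniformity: in the local exponent both the numerator $P_{0\cdot\zeta_F}^n(\RV)$ and the denominator $S_n\zeta_F(\tau,\RV)$ are $\RV$-dependent, so the fiberwise control must be uniform along the orbit $\{\RV_n\}_{n\geq0}$, which is where the locally uniform convergence of the pressures from Lemma \ref{lemma convexity of the toporogical pressure} and the tail estimate (\ref{eq tail zero}) are indispensable. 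This uniform random multifractal formalism for finitely many conformal maps is exactly what \cite[Theorem 6.5]{mayer2011distance} and \cite[Theorem 2.6]{Zhihui} deliver, so the work reduces to verifying that $\Phi_F$ satisfies their hypotheses.
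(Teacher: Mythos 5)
Your proposal is essentially correct, but it is worth knowing that the paper offers no internal proof of this statement at all: Theorem \ref{thm multifractal analysis for local dimension} is imported verbatim from \cite[Theorem 6.5]{mayer2011distance} (and, in greater generality, \cite[Theorem 2.6]{Zhihui}), the only work the paper does being to observe that for finite $F$ the restricted system $\Phi_F$ is a random conformal GDMS with finitely many edges, so those results apply directly. What you have written is a faithful reconstruction of the argument \emph{inside} those references: the temperature potentials $\psi_q=-qp_F(0)+T_F(q)\zeta_F$ with $P_F(\psi_q)=0$, the identity $T_F'(q)=p_F(0)\bigl(\int\zeta_F\,d\widetilde\rho_{\psi_q}\bigr)^{-1}$, the entropy--dimension formula giving $\dim_H\mu_q=q\alpha(q)+T_F(q)=T_F^*(\alpha(q))$, the mass distribution principle for the lower bound, a covering argument for the upper bound, and the intersection over rational $q$ to get a single full-measure set $\loc$. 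Two remarks on where your sketch is thinnest relative to what the references actually supply: (i) the fiberwise volume lemma $\dim_H\mu_q=h^r_{\widetilde\rho_{\psi_q}}(\sigma\times\RD)/\chi(q)$ is itself a nontrivial random-dynamics statement that must be quoted or proved, not just asserted; and (ii) the upper bound $\dim_H K_{\RV,m^0_{F,\RV}}(\alpha)\leq T_F^*(\alpha)$ is usually obtained via a Billingsley/Frostman-type comparison against the conformal measure for $\psi_q$ with $-T_F'(q)=\alpha$ rather than against $\widetilde\rho_{\psi_q}$ for arbitrary $q$, and making that covering argument uniform in $\RV$ along the orbit is exactly the delicate random ingredient. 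Also note that your opening reduction to the Markov spectrum and the Lyapunov level set is not needed for this theorem itself; in the paper that reduction belongs to the \emph{application} of this theorem in Theorem \ref{thm multifractal analysis for lyapunov compact}, where Propositions \ref{prop markov dimension and local dimension} and \ref{prop lyapunov spectrum and markov dimension} are used to convert the local dimension spectrum of $m^0_{F,\RV}$ into the Lyapunov spectrum.
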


For a RCGDMS $\Phi$ satisfying RBSC, a subset $F\subset E$ such that $\CF$ is finitely primitive, $\RV\in\RS$ and $\beta\in\mathbb{R}$ we define $L_{\RV,F}(\beta):=L_{\RV}(\beta)\cap J_{\RV,F}$ and $l_{\RV,F}(\beta):=\dim_H(L_{\RV,F}(\beta))$. 

\begin{thm}\label{thm multifractal analysis for lyapunov compact}
    Let $\Phi$ be a normal RCGDMS satisfying RBSC and let $F$ be a finite subset of $E$ such that $\CF$ is finitely primitive and $\#F\geq 2$. 
    Then there exists $\RS_{F,\text{Ly}}$ such that $\RP(\RS_{F,\text{Ly}})=1$ and 
    for all  $\RV\in\RS_{F,\text{Ly}}$ and $\beta\in (-p'_F(+\infty),-p'_F(-\infty))$ we have
    \[
    l_{\RV,F}(\beta)=\frac{1}{\beta}\inf_{s\in\mathbb{R}}\{p_F(s\zeta_F)+\beta s\}.
    \]
\end{thm}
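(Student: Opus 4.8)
The plan is to reduce the statement, by means of the two preceding propositions, to the multifractal formalism for the local dimension spectrum already recorded in Theorem \ref{thm multifractal analysis for local dimension}, and then to carry out an explicit Legendre–transform computation relating $T_F^*$ to $p_F$. First I would fix the full-measure set
\[
\RS_{F,\text{Ly}}:=\diR\cap\lowb\cap\finFPhi\cap\preintervalF\cap\tailR\cap\loc,
\]
which has probability one as a finite intersection of full-measure sets, so that every cited result is simultaneously available on it. Note that since $p_F$ is strictly decreasing with $p_F'(s)\le\log\cc<0$, the interval $(-p_F'(+\infty),-p_F'(-\infty))$ consists of positive reals, so the constraint $\beta>0$ in Proposition \ref{prop lyapunov spectrum and markov dimension} is automatic.

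The first, geometric reduction proceeds as follows. Specializing Proposition \ref{prop lyapunov spectrum and markov dimension} to $s=0$ gives $L_{\RV,F}(\beta)=K^M_{\RV,m_{F,\RV}^0}(p_F(0)/\beta)$ for every $\beta$ in the interval; applying Proposition \ref{prop markov dimension and local dimension} with $\nu_\RV=m_{F,\RV}^0$ then identifies the Markov level set with the genuine local-dimension level set, so that $L_{\RV,F}(\beta)=K_{\RV,m_{F,\RV}^0}(p_F(0)/\beta)$. Taking Hausdorff dimensions yields
\[
l_{\RV,F}(\beta)=g_{\RV,m_{F,\RV}^0}\!\left(\frac{p_F(0)}{\beta}\right),
\]
which reduces the Lyapunov spectrum to the local dimension spectrum of the conformal measure $m_{F,\RV}^0$ evaluated at the single point $\alpha=p_F(0)/\beta$.

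The analytic heart is the identification of $T_F^*$ with the Legendre transform of $p_F$. Since $F$ is finite we have $Fin_F=\mathbb{R}$, and by Lemma \ref{lemma basic properties of the pressure} the map $p_F$ is real-analytic, strictly decreasing, with $\lim_{s\to+\infty}p_F(s)=-\infty$ and $\lim_{s\to-\infty}p_F(s)=+\infty$, hence a bijection of $\mathbb{R}$; moreover $p_F(0)=P_F(0)>0$ because $\#F\geq2$ and $\CF$ is finitely primitive. Because adding the constant $-qp_F(0)$ to a potential shifts its pressure by that constant, $p_{F,0}(q,b)=-qp_F(0)+p_F(b)$, so the defining relation $p_{F,0}(q,T_F(q))=0$ reads $p_F(T_F(q))=qp_F(0)$, i.e.\ $T_F(q)=p_F^{-1}(qp_F(0))$. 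Substituting $s=T_F(q)$, equivalently $q=p_F(s)/p_F(0)$, into the definition of $T_F^*$ and using $\beta>0$ gives
\[
T_F^*(\alpha)=\inf_{s\in\mathbb{R}}\left\{\alpha\,\frac{p_F(s)}{p_F(0)}+s\right\},
\qquad\text{so}\qquad
T_F^*\!\left(\frac{p_F(0)}{\beta}\right)=\frac1\beta\inf_{s\in\mathbb{R}}\{p_F(s)+\beta s\}.
\]
It remains to verify that $\beta\in(-p_F'(+\infty),-p_F'(-\infty))$ is exactly equivalent to $\alpha=p_F(0)/\beta\in(-T_F'(+\infty),-T_F'(-\infty))$: differentiating $p_F(T_F(q))=qp_F(0)$ gives $-T_F'(q)=p_F(0)/(-p_F'(T_F(q)))$, and letting $q\to\pm\infty$ (whence $T_F(q)\to\mp\infty$) identifies the target interval with $\big(p_F(0)/(-p_F'(-\infty)),\,p_F(0)/(-p_F'(+\infty))\big)$, which is precisely the image of $(-p_F'(+\infty),-p_F'(-\infty))$ under the decreasing map $\beta\mapsto p_F(0)/\beta$. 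With $\alpha$ in range, Theorem \ref{thm multifractal analysis for local dimension} gives $g_{\RV,m_{F,\RV}^0}(\alpha)=T_F^*(\alpha)$, and chaining the three displays yields $l_{\RV,F}(\beta)=\frac1\beta\inf_{s}\{p_F(s)+\beta s\}$, which equals the asserted quantity since $p_F(s)=P_F(s\zeta_F)$ for finite $F$.

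I expect the main obstacle to be the nondegeneracy underlying the interval matching: one must ensure that $p_F$ is genuinely strictly convex, so that $-p_F'$ is a strictly decreasing bijection onto the open interval $(-p_F'(+\infty),-p_F'(-\infty))$ and the endpoint limits above are not vacuous. This is exactly where the hypothesis $\#F\geq2$ enters, since it forces the geometric potential $\zeta_F$ to be non-cohomologous to a constant and hence $s\mapsto p_F(s)$ to be strictly convex with a nondegenerate spectrum; once this is secured, all the remaining steps are formal.
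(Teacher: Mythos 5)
Your proposal is correct and follows essentially the same route as the paper: reduce $L_{\RV,F}(\beta)$ to the level set $K_{\RV,m^0_{F,\RV}}(P_F(0)/\beta)$ via Propositions \ref{prop lyapunov spectrum and markov dimension} and \ref{prop markov dimension and local dimension} with $s=0$, invoke Theorem \ref{thm multifractal analysis for local dimension}, and convert $T_F^*$ into the Legendre transform of $p_F$ by the substitution $s=T_F(q)$ together with the interval matching coming from $T_F'(q)=P_F(0)/p_F'(T_F(q))$. The only slight inaccuracy is your closing remark that $\#F\geq 2$ forces strict convexity of $p_F$ (it does not in general; it is used only to guarantee $P_F(0)>0$, and the statement is vacuous if the interval degenerates), but this does not affect the argument.
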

\begin{proof}
Let $\RS_{F,\text{Ly}}:=\loc\cap\diR\cap \lowb\cap \finFPhi\cap\preintervalF$ and let $\RV\in\RS_{F,\text{Ly}}$.
Note that for all $(q,b)\in \mathbb{R}^2$ we have 
$p_{F,0}(q,b)=-qP_F(0)+p_F(b)$ and $ 
p_{F,0}(q,T_{F}(q))=0.    
$
By (P4) of Lemma \ref{lemma basic properties of the pressure}, we obtain 
$
\lim_{q\to\infty}T_{F}(q)=-\infty$ and 
$\lim_{q\to-\infty}T_{F}(q)=\infty .
$
In particular, we obtain 
 $   T_{F}(\mathbb{R})=\mathbb{R}.$
Moreover, by noting that $\#F\geq 2$ and Lemma \ref{lemma basic properties of the pressure}, for all $q\in\mathbb{R}$ we obtain
$T_{F}'(q)={P_F(0)}/{p_{F}'(T_{F}( q))}<0.$ 
Therefore, $x\in(-T'_{F}(+\infty),-T'_{F}(-\infty))$ if and only if $P_F(0)/x\in(-p'_F(+\infty),-p_F'(-\infty))$.
Let $\beta\in (-p_F'(+\infty),-p_F'(-\infty))$. By Proposition \ref{prop lyapunov spectrum and markov dimension}, Proposition \ref{prop markov dimension and local dimension} and Theorem \ref{thm multifractal analysis for local dimension}, we obtain 
\begin{align*}
&l_{\RV,F}(\beta)=g_{\RV,m_{F,\RV}^0}\left(\frac{P_F(0)}{\beta}\right)=
\inf
\left\{
\frac{P_F(0)}{\beta}q+T_{F}(q):q\in\mathbb{R}
\right\}
\\&
=\inf
\left\{
\frac{P_F(T_{F}(q)\zeta_F)}{\beta}+T_{F}(q):q\in\mathbb{R}
\right\}
=\frac{1}{\beta}\inf
\left\{
P_F(s\zeta_F)+\beta s:s\in\mathbb{R}
\right\}
.    
\end{align*}
\end{proof}

Let $\{F_n\}_{n\in\mathbb{N}}$ be a sequence of subsets of $E$ such that $\Lambda_a\subset F_1$, for all $n\in\mathbb{N}$ the set $F_n$ is a finite set, $F_n\subset F_{n+1}$ and $E=\bigcup_{n\in\mathbb{N}}F_n$. For all $n\in\mathbb{N}$  we set 
$p_n(s):=p_{F_n}(s).$
The following lemma is a direct consequence of Proposition \ref{prop compact approximation geometric pressure}, Lemma \ref{lemma convexity of the toporogical pressure} and (P1) of Lemma \ref{lemma basic properties of the pressure}.
\begin{lemma}\label{lemma range of the deribative of pressure}
    Let $\Phi$ be a normal RCGDMS  and
    let $\{F_n\}_{n\in\mathbb{N}}$ be a  sequence of subsets of $E$ such that $\Lambda_a\subset F_1$, for all $n\in\mathbb{N}$ the set $F_n$ is a finite set, $F_n\subset F_{n+1}$ and $E=\bigcup_{n\in\mathbb{N}}F_n$. For all $n\in \mathbb{N}$ we have
  $   -p'_{n+1}(+\infty)
    \leq
    -p'_{n}(+\infty)$
     ,
     $-p'_n(-\infty)
    \leq -p'_{n+1}(-\infty)$   
     and 
    $\lim_{n\to\infty}p'_{n}(+\infty)=p'(+\infty).$
Furthermore, if $\Phi$ is cofinitely regular then we have
$\lim_{n\to\infty} p_n'(s_{\infty})=-\infty=p'(s_{\infty}).$
\end{lemma}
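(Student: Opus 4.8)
The plan is to reduce the lemma to elementary convex analysis, using that each $p_n:=p_{F_n}$ is a finite convex function on all of $\mathbb{R}$ that increases pointwise to $p$. First I would fix the setup. Since $F_n$ is finite we have $Fin_{F_n}=\mathbb{R}$, so by (P5) of Lemma \ref{lemma basic properties of the pressure} and Lemma \ref{lemma convexity of the toporogical pressure} each $p_n$ is convex and everywhere differentiable on $\mathbb{R}$. By (P1) of Lemma \ref{lemma basic properties of the pressure} the sequence $\{p_n\}_{n\in\mathbb{N}}$ is pointwise nondecreasing; and since $\Lambda_a\subset F_1$ makes each $(F_n)_A^\infty$ finitely primitive, while $\{F_n\}$ is cofinal among the finite subsets $F\subset E$ with $\CF$ finitely primitive, Proposition \ref{prop compact approximation geometric pressure} gives
\[
p(s)=\sup_{n\in\mathbb{N}}p_n(s)=\lim_{n\to\infty}p_n(s)\qquad(s\in\mathbb{R}),
\]
the last equality holding because the sequence is monotone.

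For the two monotonicity assertions I would invoke the standard identity $\varphi'(\pm\infty)=\lim_{s\to\pm\infty}\varphi(s)/s$, valid for any finite convex $\varphi$ on $\mathbb{R}$. If $\varphi\le\psi$ are two such functions, then dividing by $s>0$ and letting $s\to+\infty$ gives $\varphi'(+\infty)\le\psi'(+\infty)$, whereas dividing by $s<0$ reverses the inequality and letting $s\to-\infty$ gives $\varphi'(-\infty)\ge\psi'(-\infty)$. Applying this to $p_n\le p_{n+1}$ yields $p'_n(+\infty)\le p'_{n+1}(+\infty)$ and $p'_n(-\infty)\ge p'_{n+1}(-\infty)$, which are precisely $-p'_{n+1}(+\infty)\le -p'_n(+\infty)$ and $-p'_n(-\infty)\le -p'_{n+1}(-\infty)$.

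The convergence $\lim_n p'_n(+\infty)=p'(+\infty)$ is where the argument has real content, since it requires interchanging the limit in $n$ with the limit in $s$. From $p_n\le p$ the same comparison gives $p'_n(+\infty)\le p'(+\infty)$, so $L:=\lim_{n\to\infty}p'_n(+\infty)=\sup_n p'_n(+\infty)\le p'(+\infty)$. For the reverse inequality I would fix $s_0\in Fin^\circ$ and note that for every $s>s_0$ the secant slope of the convex function $p_n$ is dominated by its asymptotic slope, so
\[
\frac{p_n(s)-p_n(s_0)}{s-s_0}\le p'_n(+\infty)\le L.
\]
Letting $n\to\infty$ and using pointwise convergence at the two finite points $s_0$ and $s$ gives $\tfrac{p(s)-p(s_0)}{s-s_0}\le L$, and letting $s\to+\infty$ gives $p'(+\infty)\le L$; hence $L=p'(+\infty)$.

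Finally, suppose $\Phi$ is cofinitely regular. Because $\#E=\infty$ the threshold $s_\infty$ is a genuine real number (for $s\le 0$ the potential $s\zeta$ is not summable), so $s_\infty$ can be used as a point of evaluation. As $p=\sup_n p_n$ is a supremum of continuous functions it is lower semicontinuous, whence $p(s_\infty)=\infty$ forces $\lim_{s\to s_\infty+0}p(s)=\infty$; convexity of $p$ on $Fin^\circ$ then forces $p'(s_\infty)=\lim_{s\to s_\infty+0}p'_-(s)=-\infty$, since a finite value $-M$ of this limit would give $p(s)\le p(s')+M(s'-s_\infty)$ for $s_\infty<s<s'$ and keep $p$ bounded near $s_\infty$. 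For the finite systems I would fix any $s>s_\infty$ and use convexity of $p_n$ to bound the derivative at the left endpoint by a secant, $p'_n(s_\infty)\le\frac{p_n(s)-p_n(s_\infty)}{s-s_\infty}$; as $n\to\infty$ the numerator tends to $p(s)-\infty=-\infty$ while $s-s_\infty>0$ is fixed, giving $\lim_{n\to\infty}p'_n(s_\infty)=-\infty$. The main obstacle throughout is this interchange of limits, which is controlled precisely by the monotone convergence $p_n\uparrow p$, the convexity of each $p_n$ (comparing secants to asymptotic slopes term by term), and the lower semicontinuity of the supremum at $s_\infty$.
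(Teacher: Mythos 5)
Your proof is correct and follows exactly the route the paper intends: the paper offers no written proof, stating only that the lemma is a direct consequence of Proposition \ref{prop compact approximation geometric pressure} (which gives the monotone pointwise convergence $p_n\uparrow p$), Lemma \ref{lemma convexity of the toporogical pressure}, and (P1) of Lemma \ref{lemma basic properties of the pressure}, and your convex-analysis elaboration (asymptotic slopes via $\lim\varphi(s)/s$, secant comparisons, and lower semicontinuity of the supremum at $s_\infty$) is a faithful and complete filling-in of that argument.
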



\begin{lemma}\label{lemma compact apploximation of lgendre transform}
Let $\Phi$ be a normal cofinitely regular RCGDMS and
    let $\{F_n\}_{n\in\mathbb{N}}$ be an ascending  sequence of subsets of $E$ such that conditions of Lemma \ref{lemma range of the deribative of pressure} hold.
    Then, for all $\beta\in (-p'(+\infty),\infty)$ we have
\[
\lim_{n\to\infty}
\inf
\left\{
p_n(s)+\beta s:s\in\mathbb{R}
\right\}
=
\inf
\left\{
p(s)+\beta s:s\in\mathbb{R}
\right\}
.\]
\end{lemma}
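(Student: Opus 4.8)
The plan is to treat this as a statement about the convex conjugates (Legendre transforms) of the pressure functions and to exploit that the finite-subsystem pressures increase to the full pressure. Write $g_n(s):=p_n(s)+\beta s$ and $g(s):=p(s)+\beta s$, and set $I_n:=\inf_{s\in\mathbb R}g_n(s)$ and $I:=\inf_{s\in\mathbb R}g(s)$. By Proposition~\ref{prop compact approximation geometric pressure} together with (P1) of Lemma~\ref{lemma basic properties of the pressure} and the fact that $\{F_n\}$ exhausts $E$, the functions $p_n$ increase pointwise to $p$ on all of $\mathbb R$; moreover each $p_n$ is convex, real-analytic and strictly decreasing by Lemma~\ref{lemma convexity of the toporogical pressure} and (P3), (P5), while $p$ is convex and finite on $(s_\infty,\infty)$ and, by cofinite regularity, satisfies $p(s_\infty)=\infty$, so $p\equiv\infty$ on $(-\infty,s_\infty]$. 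Since $p_n\le p_{n+1}\le p$ pointwise, the sequence $I_n$ is nondecreasing and $I_n\le I$; hence it suffices to prove $\liminf_n I_n\ge I$.

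First I would locate the minimizer of the limit. Because $\beta>-p'(+\infty)$, the right-hand slope of $g$ tends to $p'(+\infty)+\beta>0$ as $s\to\infty$, so $g(s)\to\infty$; because $\Phi$ is cofinitely regular, $p(s)\to\infty$ as $s\to s_\infty+0$, which forces $p'_-(s)\to-\infty$ there and hence $g(s)\to\infty$ at the left end of its domain. Convexity and this two-sided coercivity on $(s_\infty,\infty)$ then give a minimizer $s^*\in(s_\infty,\infty)$ with $I=g(s^*)<\infty$. Since $p'_-(s)\to-\infty$ as $s\to s_\infty+0$ and $p'_+(s)\to p'(+\infty)>-\beta$ as $s\to\infty$, and since $p$ is differentiable off a countable set, I can choose differentiability points $a,b$ of $p$ with $s_\infty<a<s^*<b$ and $p'(a)<-\beta<p'(b)$.

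Next I would pass to the compact window $[a,b]\subset(s_\infty,\infty)$, on which all $p_n$ and $p$ are finite and continuous. The monotone convergence $p_n\uparrow p$ to a continuous limit gives uniform convergence on $[a,b]$ (Dini's theorem, or the uniform-on-compacta convergence of convex functions in \cite{rockafellar1997convex}), so $\min_{[a,b]}g_n\to\min_{[a,b]}g=g(s^*)=I$, using $s^*\in(a,b)$. To upgrade this to the \emph{global} infimum I use convergence of derivatives of convex functions at differentiability points of the limit \cite{rockafellar1997convex}: $p_n'(a)\to p'(a)<-\beta$ and $p_n'(b)\to p'(b)>-\beta$. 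As each $p_n'$ is increasing, for all large $n$ one gets $g_n'(s)=p_n'(s)+\beta\le p_n'(a)+\beta<0$ for $s\le a$ and $g_n'(s)\ge p_n'(b)+\beta>0$ for $s\ge b$; thus $g_n$ is strictly decreasing on $(-\infty,a]$ and strictly increasing on $[b,\infty)$, whence $I_n=\inf_{\mathbb R}g_n=\min_{[a,b]}g_n$. Combining the two displays yields $I_n\to I$.

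The hard part is this last step: controlling the global minimizers of $g_n$ uniformly in $n$ and ruling out their escape to $\pm\infty$. This is exactly where both hypotheses enter decisively. The bound $\beta>-p'(+\infty)$, together with $p_n'(+\infty)\uparrow p'(+\infty)$ from Lemma~\ref{lemma range of the deribative of pressure}, keeps the minimizers bounded above; cofinite regularity forces $p'(s_\infty)=-\infty$, which both supplies the left-end coercivity of $g$ and, via $p_n'(a)<-\beta$, yields coercivity of each $g_n$ at $-\infty$ even though $p_n$ remains finite on $(-\infty,s_\infty]$. A secondary technical point is that $p$ is only known to be convex, not a priori differentiable, so the derivative comparison must be performed at differentiability points $a,b$, which exist densely by convexity.
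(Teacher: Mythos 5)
Your proof is correct, but it takes a genuinely different route from the paper's. The paper argues by contradiction: assuming $\lim_n I_n<I$, it sets $\epsilon:=(I-\lim_nI_n)/2$ and considers the sublevel sets $A_n:=\{s\in\mathbb{R}:p_n(s)+\beta s\le I-\epsilon\}$, which are nested (by monotonicity of $p_n$ in $n$), closed, non-empty, and --- for $n$ large, using $-p_n'(+\infty)<\beta<-p_n'(-\infty)$ from Lemma \ref{lemma range of the deribative of pressure} --- bounded, hence compact; a point $s$ in the non-empty intersection then satisfies $p(s)+\beta s=\sup_n\bigl(p_n(s)+\beta s\bigr)\le I-\epsilon$, contradicting the definition of $I$. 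That argument needs only the pointwise monotone convergence $p_n\uparrow p$ from Proposition \ref{prop compact approximation geometric pressure} and the finite-intersection property, with no appeal to Dini's theorem or to convergence of derivatives. Your argument is direct: you trap the minimizers of every $g_n$ in a single compact window $[a,b]$ by transferring slope information from two differentiability points of $p$ to the $p_n$ via the convex-analysis fact that $p_n'(a)\to p'(a)$, and then finish with uniform convergence on $[a,b]$. Both proofs rest on the same two inputs (the compact approximation of the pressure and the derivative asymptotics of Lemma \ref{lemma range of the deribative of pressure}); yours gives the extra information that the minimizers stay in a fixed compact set, at the cost of heavier machinery. One cosmetic remark: the existence of the minimizer $s^*$ and the left-end coercivity only require $p'(s_\infty)=-\infty$, which is exactly what Lemma \ref{lemma range of the deribative of pressure} supplies under cofinite regularity (and covers the case $s_\infty=-\infty$ as well); the stronger claim that $p(s)\to\infty$ as $s\to s_\infty+0$ is not needed, and your closing paragraph in effect already falls back on the derivative statement.
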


\begin{proof}
Let $\beta\in (-p'(+\infty),\infty)\subset (0,\infty)$.
For all $n\in\mathbb{N}$ we define 
$F_n(\beta)
=
\inf
\left\{
p_n(s)+\beta s:s\in\mathbb{R}
\right\}
$
 and 
$F(\beta)
=
\inf
\left\{
p(s)+\beta s:s\in\mathbb{R}
\right\}.
$
By (P1) of Lemma \ref{lemma basic properties of the pressure},  it is enough to show that $F_{\infty}(\beta):=\lim_{n\to\infty}F_n(\beta)\geq F(\beta)$.
For a contradiction, we assume that $F_{\infty}(\beta)< F(\beta)$. We set $\epsilon:=(F(\beta)-F_{\infty}(\beta))/2>0$. Then, for all $n\in \mathbb{N}$ we have 
\begin{align}\label{eq proof of compact apploximation of transform}
    F_n(\beta)\leq F_{\infty}(\beta)<F(\beta)-\epsilon.
\end{align} 
For all $n\in\mathbb{N}$ we define 
$A_n:=\{s\in\mathbb{R}:p_n(s)+s\beta\leq 
F(\beta)-\epsilon
\}.$
By (P1) of Lemma \ref{lemma basic properties of the pressure}, for all $n\in\mathbb{N}$ we have $A_{n+1} \subset A_n$. 
Since for all $n\in\mathbb{N}$ the function $s\in\mathbb{R}\mapsto p_n(s)+s\beta$ is continuous, for all $n\in\mathbb{N}$ the set $A_n$ is closed.
By (\ref{eq proof of compact apploximation of transform}), for all $n\in\mathbb{N}$ the set $A_n$ is non-empty.
By Lemma \ref{lemma range of the deribative of pressure}, there exists $N\in\mathbb{N}$ such that for all $n\geq N$ we have
$-p_n'(+\infty)<\beta<-p_n'(-\infty)$.
Thus, for all $n\geq N$ we have
$
\lim_{s\to\infty}p_n(s)+\beta s=\infty
$
and
$
\lim_{s\to-\infty}p_n(s)+\beta s=\infty.
$
This implies that for all $n\geq N$  the set $A_n$ is a bounded set.
Thus, for all $n\geq N$ the set $A_n$ is a non-empty compact set satisfying $A_{n+1}\subset A_n$ and thus, $\bigcap_{n=N}^\infty A_n$ is non-empty.
By definition of $F(\beta)$ and Proposition \ref{prop compact approximation geometric pressure}, for $s\in \bigcap_{n=N}^\infty A_n$ we have $F(\beta)\leq p(s)+\beta s\leq F(\beta)-\epsilon$. This is a contradiction. Thus, we obtain $F(\beta)\leq F_\infty(\beta)$.
\end{proof}

\begin{prop}
    \label{prop lower bound of the lyapunov spectrum}
    Let $\Phi$ be a normal cofinitely regular RCGDMS $\Phi$ satisfying  RBSC. Then,
    there exists $\underline{\RS}\subset \RS$ such that $\RP(\underline{\RS})=1$ and for all $\RV\in\underline{\RS}$  and  $\beta\in (-p'(+\infty),\infty)$ we have 
    \[
    l_{\RV}(\beta)\geq\frac{1}{\beta}\inf
\left\{
p(s)+\beta s:s\in\mathbb{R}
\right\}.
    \]
\end{prop}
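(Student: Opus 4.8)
The plan is to exhaust $E$ by an ascending sequence of finite subalphabets and to transfer, via monotonicity of Hausdorff dimension, the multifractal formula already available for finite subsystems (Theorem \ref{thm multifractal analysis for lyapunov compact}) to the full system by a limiting argument. I would fix an ascending sequence $\{F_n\}_{n\in\mathbb{N}}$ of finite subsets of $E$ with $\Lambda_a\subset F_1$, $F_n\subset F_{n+1}$ and $E=\bigcup_n F_n$; since $\Lambda_a\subset F_1\subset F_n$, each $(F_n)_A^\infty$ is finitely primitive, and because $\#E=\infty$ one may assume $\#F_n\geq 2$ for every $n$. I would then set $\underline{\RS}:=\bigcap_{n\in\mathbb{N}}\RS_{F_n,\text{Ly}}$, where $\RS_{F_n,\text{Ly}}$ is the full-measure set produced by Theorem \ref{thm multifractal analysis for lyapunov compact} for the subsystem $F_n$. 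As a countable intersection of full-measure sets, $\RP(\underline{\RS})=1$, and crucially this set is chosen once and for all, independently of $\beta$, which is exactly what is needed for a $\beta$-uniform conclusion.

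Next I would fix $\RV\in\underline{\RS}$ and $\beta\in(-p'(+\infty),\infty)$ and check that $\beta$ eventually lies in the admissible interval of each finite subsystem, i.e.\ that there is $N=N(\beta)$ with $\beta\in(-p'_n(+\infty),-p'_n(-\infty))$ for all $n\geq N$. For the left endpoint, Lemma \ref{lemma range of the deribative of pressure} gives that $\{-p'_n(+\infty)\}_n$ is non-increasing with limit $-p'(+\infty)<\beta$, so $-p'_n(+\infty)<\beta$ for all large $n$. For the right endpoint I would invoke cofinite regularity: Lemma \ref{lemma range of the deribative of pressure} yields $\lim_{n\to\infty}p'_n(s_\infty)=-\infty$, and since each $p_n$ is convex we have $-p'_n(-\infty)\geq -p'_n(s_\infty)\to\infty$ (when $s_\infty=-\infty$ this is the statement itself); hence $\beta<-p'_n(-\infty)$ for all large $n$. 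This endpoint bookkeeping is the step that genuinely uses the cofinitely regular hypothesis, and I expect it to be the only delicate point—verifying that cofinite regularity forces the finite-system intervals to eventually capture any fixed $\beta<\infty$.

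For every $n\geq N$, since $\RV\in\RS_{F_n,\text{Ly}}$ and $\#F_n\geq 2$, Theorem \ref{thm multifractal analysis for lyapunov compact} gives
\[
l_{\RV,F_n}(\beta)=\frac{1}{\beta}\inf_{s\in\mathbb{R}}\{p_n(s)+\beta s\}.
\]
Because $L_{\RV,F_n}(\beta)=L_\RV(\beta)\cap J_{\RV,F_n}\subset L_\RV(\beta)$, monotonicity of Hausdorff dimension gives $l_\RV(\beta)\geq l_{\RV,F_n}(\beta)$, so that for all $n\geq N$,
\[
l_\RV(\beta)\geq\frac{1}{\beta}\inf_{s\in\mathbb{R}}\{p_n(s)+\beta s\},
\]
where the division preserves the inequality since $\beta>-p'(+\infty)\geq 0$. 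Finally, letting $n\to\infty$ and applying Lemma \ref{lemma compact apploximation of lgendre transform} to pass the limit through the infimum yields
\[
l_\RV(\beta)\geq\frac{1}{\beta}\inf_{s\in\mathbb{R}}\{p(s)+\beta s\},
\]
which is the desired lower bound. Apart from the interval-containment argument of the second paragraph, every step is a direct consequence of the cited results together with the dimension-monotonicity of the nested level sets $L_{\RV,F_n}(\beta)\subset L_\RV(\beta)$.
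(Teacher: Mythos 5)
Your proposal is correct and follows essentially the same route as the paper's proof: the same ascending exhaustion $\{F_n\}$, the same $\beta$-independent full-measure set $\underline{\RS}=\bigcap_n\RS_{F_n,\text{Ly}}$, the same use of Lemma \ref{lemma range of the deribative of pressure} to place $\beta$ in $(-p'_n(+\infty),-p'_n(-\infty))$ for large $n$, monotonicity of Hausdorff dimension over $L_{\RV,F_n}(\beta)\subset L_\RV(\beta)$, and Lemma \ref{lemma compact apploximation of lgendre transform} to pass to the limit. Your more explicit justification of the right-endpoint containment via cofinite regularity and convexity is a correct unpacking of what the paper leaves implicit.
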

\begin{proof}
We consider an ascending sequence $\{F_n\}_{n\in\mathbb{N}}\subset E$ such that conditions of Lemma \ref{lemma range of the deribative of pressure} hold. Let $\RS_{F_n}$ ($n\in\mathbb{N}$) be the full-measure set obtained in Theorem \ref{thm multifractal analysis for lyapunov compact}. We set  $\underline{\RS}:=\bigcap_{n\in\mathbb{N}}\RS_{F_n}$.
Let $\beta\in (-p'(+\infty),\infty)$ and let $\RV\in \underline{\RS}$. By Lemma \ref{lemma range of the deribative of pressure}, there exists $N\in\mathbb{N}$ such that for all $n\geq N$ we have $\beta\in(-p'_n(+\infty),-p'_n(-\infty))$. Therefore, by  definitions of $l_{\RV,F_n}$ ($n\in\mathbb{N}$) and $l_{\RV}$, Theorem \ref{thm multifractal analysis for lyapunov compact} and Lemma \ref{lemma compact apploximation of lgendre transform}, we obtain
$l_{\RV}(\beta)\geq \lim_{n\to\infty}l_{\RV,F_n}(\beta)=({1}/{\beta})\inf \{p(s)+\beta s:s\in\mathbb{R}\}.$
\end{proof}

Let $\Phi$ be a RCGDMS. Note that for all $k\in\mathbb{Z}$, $s\in\mathbb{R}$ and $\RV\in\diR$ we have 
$v_\exponent((s\zeta)_{\RV_k})=sv_\exponent(\zeta_{\RV_k})\leq sv_\exponent(\zeta)=v_\exponent(s\zeta)$ and thus, $\RV\in \RS_{d,s\zeta}$.
  Therefore, by Theorem \ref{thm conformal measure} and (\ref{eq monotonicity}), for all $\RV\in \diR\cap\finEPhi$ and $s\in  Fin$ we obtain fiberwise multifractal measures $\{\widetilde m_{\RV_{k-1}}^{s\zeta}\}_{k\in\mathbb{N}}$ with respect to the potential $s\zeta$ and $\RV$. For $\RV\in \diR\cap\finEPhi$ and $s\in  Fin$ we denote by $\{\widetilde m_{\RV_{k-1}}^s\}_{k\in\mathbb{N}}$ fiberwise multifractal measures with respect to the potential $s\zeta$ and $\RV$, and we define the Borel probability measure $m^s_{\RV}$ on $J_{\RV}$ by $m^s_{\RV}:=\widetilde m_\RV^s\circ\pi_{\RV}^{-1}$.

\begin{prop}
\label{prop upper bound of the lyapunov spectrum}
    Let $\Phi$ be a normal RCGDMS $\Phi$ satisfying RBSC. Then
    there exists $\overline{\RS}\subset \RS$ such that $\RP(\overline{\RS})=1$ and for all $\RV\in\overline{\RS}$  and $\beta\in (-p'(+\infty),\infty)$ we have 
    \[
    l_{\RV}(\beta)\leq\frac{1}{\beta}\inf
\left\{
p(s)+\beta s:s\in\mathbb{R}
\right\}.
    \]    
\end{prop}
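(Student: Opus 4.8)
The plan is to prove this ``easy half'' of the multifractal formalism by a direct covering argument, reducing matters to the growth rate of the partition functions $A_n^\RV(s\zeta)$. Recall that for $x=\pi_\RV(\tau)$ one has $\log|(T_{\RV,\Phi}^n)'(x)|=-S_n\zeta(\tau,\RV)$, so that $x\in L_\RV(\beta)$ means exactly $\tfrac1n S_n\zeta(\tau,\RV)\to-\beta$. Since $\beta\in(-p'(+\infty),\infty)\subset(0,\infty)$ (as $p$ is strictly decreasing) and $p\equiv+\infty$ off $Fin$, it suffices to prove that for every $s\in Fin$ one has
\[
l_\RV(\beta)\le s+\frac{p(s)}{\beta},
\]
because taking the infimum over $s\in Fin$ then produces $\tfrac1\beta\inf_{s\in\mathbb{R}}\{p(s)+\beta s\}$. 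The measure $m^s_\RV$ built in Theorem \ref{thm conformal measure} is implicitly being used here: the number $s+p(s)/\beta$ is its local dimension along $L_\RV(\beta)$, but I will work with cylinder covers directly so as to avoid any ball-to-cylinder comparison, which is delicate in the infinite-alphabet setting.

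Fix $s\in Fin$ and write $t(s):=s+p(s)/\beta$. For $\epsilon>0$ and $N\in\mathbb{N}$ I set $L^{N,\epsilon}_\RV:=\{\pi_\RV(\tau): |\tfrac1n S_n\zeta(\tau,\RV)+\beta|<\epsilon\text{ for all }n\ge N\}$, so that $L_\RV(\beta)\subset\bigcup_N L^{N,\epsilon}_\RV$ and, by countable stability of Hausdorff dimension, it is enough to bound $\dim_H L^{N,\epsilon}_\RV$. For $n\ge N$ the cylinder images $\phi_{\tau|_n,\RV}(X_{t(\tau|_n)})$ with indices in
\[
\mathcal{C}_n(\epsilon):=\bigl\{a\in E_A^n:\overline{S_n}\zeta([a],\RV)\in(-n(\beta+\epsilon),\,-n(\beta-\epsilon)+\log\bd)\bigr\}
\]
cover $L^{N,\epsilon}_\RV$, with mesh at most $\cc^n\to0$; here bounded distortion and Lemma \ref{lemma diameter condition} give $\tau|_n\in\mathcal{C}_n(\epsilon)$ and $\mathrm{diam}(\phi_{\tau|_n,\RV}(X_{t(\tau|_n)}))\le D_\Phi K_{bd}\,e^{\overline{S_n}\zeta([\tau|_n],\RV)}$. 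Taking $t=t(s)+\delta$ with $\delta>0$ and factoring $e^{t\overline{S_n}\zeta([a])}=e^{(t-s)\overline{S_n}\zeta([a])}\,e^{s\overline{S_n}\zeta([a])}$, the window defining $\mathcal{C}_n(\epsilon)$ bounds the first factor by $e^{-(t-s)n\beta}e^{|t-s|(n\epsilon+\log\bd)}$, while $\sum_{a\in E_A^n}e^{s\overline{S_n}\zeta([a])}\le e^{|s|\log\bd}A_n^\RV(s\zeta)$. Using $(t-s)\beta=p(s)+\delta\beta$ together with $\tfrac1n\log A_n^\RV(s\zeta)\to p(s)$, the $t$-sum over $\mathcal{C}_n(\epsilon)$ is bounded by a quantity of the form $\exp\!\bigl(n(-\delta\beta+(|t-s|+1)\epsilon)+O(1)\bigr)$, which tends to $0$ once $\epsilon<\delta\beta/(|t-s|+1)$. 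Hence $\mathcal{H}^t(L^{N,\epsilon}_\RV)=0$ for this $\epsilon$ and every $N$, so $\dim_H L_\RV(\beta)\le t(s)+\delta$; letting $\delta\downarrow0$ yields $l_\RV(\beta)\le t(s)$.

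The one point requiring care is that the full-measure set $\overline\RS$ must be chosen once and for all, independently of $s$. I would take $\overline\RS:=\preinterval\cap\diR\cap\lowb\cap\finEPhi$. On $\preinterval$, Lemma \ref{lemma convexity of the toporogical pressure} provides, for every $s\in Fin$ and every $e\in E$, the convergence of $\tfrac1n\log\mathfrak{L}_{n,e,E}^\RV(s\zeta)$ to $p(s)$; feeding this through the comparability chain of Lemma \ref{lemma several form of prressure}, which is valid on $\diR\cap\cb$ (the relevant constant for the potential $s\zeta$ is $|s|$ times that for $\zeta$ and is finite because $\Lambda_a$ is finite and $\Phi$ is normal), upgrades it to $\tfrac1n\log A_n^\RV(s\zeta)\to p(s)$ for all $s\in Fin$ at once. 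This simultaneity over the continuum of exponents is the main obstacle, and Lemma \ref{lemma convexity of the toporogical pressure} is precisely what resolves it: convexity of $s\mapsto\tfrac1n\log\mathfrak{L}_{n,e,E}^\RV(s\zeta)$ combined with \cite[Theorem 10.8]{rockafellar1997convex} transfers the almost sure convergence from a countable dense set of rational $s$ to every $s\in Fin$, with convergence uniform on compacta, so that a single $\RP$-null set is discarded. The remaining ingredients---the distortion estimates, the diameter comparison, and the interchange of the limits in $n$, $\delta$, $\epsilon$ and the infimum over $s$---are routine, and $l_\RV(\beta)\le\tfrac1\beta\inf_{s\in\mathbb{R}}\{p(s)+\beta s\}$ follows for all $\RV\in\overline\RS$.
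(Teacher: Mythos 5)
Your argument is correct, but it takes a genuinely different route from the paper. The paper fixes $x\in L_\RV(\beta)$ and $s\in Fin$, uses the Gibbs property (Lemma \ref{lemma gibbs}) of the fiberwise multifractal measure $\widetilde m^s_\RV$ from Theorem \ref{thm conformal measure} together with Lemmas \ref{lemma bounded distortion} and \ref{lemma diameter condition} to show that the lower pointwise dimension of $m^s_\RV$ at $x$ is at most $s+p(s)/\beta$, and then invokes a Billingsley-type mass distribution principle (\cite[Theorem 15.6.3 (b)]{urbanski2022non}) to convert this into $\dim_H L_\RV(\beta)\le s+p(s)/\beta$; this forces it to control the error term $(-P^n_{s\zeta}(\RV)+np(s)-\log M_n)/\log(\mathrm{diam})$, which is why its full-measure set also includes $\tailRE$ (so that $\tfrac1n\log M_{s\zeta}(\RV_{n+i})\to 0$) and $\preinterval$ (so that $\tfrac1n P^n_{s\zeta}(\RV)\to p(s)$ via Lemma \ref{lemma pointwise pressure and global pressure}). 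You instead run a first-moment covering argument directly on cylinders, needing only $\tfrac1n\log A_n^\RV(s\zeta)\to p(s)$, which you correctly extract for all $s\in Fin$ simultaneously from $\preinterval$ via the comparability chain of Lemma \ref{lemma several form of prressure}; you thereby bypass both the conformal measures and the tail condition $\tailRE$, at the cost of the slightly more combinatorial bookkeeping in the sets $\mathcal{C}_n(\epsilon)$ and $L^{N,\epsilon}_\RV$. Both proofs hinge on the same key point, namely choosing the null set independently of $s$ via the convexity argument of Lemma \ref{lemma convexity of the toporogical pressure}, and you identify this correctly. Two small matters worth recording if you write this up: restrict to $t(s)+\delta\in[0,d]$ (otherwise there is nothing to prove, respectively the Hausdorff measure at a negative exponent is meaningless), and note that the term $\mathfrak{L}^{\RV_{-(N_A+1)}}_{2N_A+1+n,e}$ appearing at the end of the chain in Lemma \ref{lemma several form of prressure} is based at a shifted fiber, which is harmless only because $\preinterval$ is by construction $\RD$-invariant.
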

\begin{proof}
    We define $\overline{\RS}:=\diR\cap\lowb\cap\finEPhi\cap \preinterval\cap\tailRE$. 
Let $\RV\in\overline{\RS}$ and let $\beta\in (-p'(+\infty),\infty)$. 
We fix $x\in L_\RV(\beta)$, $s\in  Fin$ and $0<\delta<\beta$. We set $\tau:=\pi_\RV^{-1}(x)$. Since $\lim_{n\to\infty}(-1/n)S_n\zeta(\tau,\RV)=\beta$, there exists $N\geq 1$ such that for all $n\geq N$ we have
\begin{align*}
e^{-n(\beta+\delta)}\leq |\phi'_{\tau|_{n-1},\RV}(\pi_{\RV_{n}}(\sigma^{n}(\tau)))|\leq e^{-n(\beta-\delta)}.    
\end{align*}
By Lemma \ref{lemma bounded distortion} and Lemma \ref{lemma diameter condition}, for all $n\in\mathbb{N}$ we have
\[
\frac{1}{K_{bd}D_\Phi}
\leq \frac{|\phi'_{\tau|_{n-1},\RV}(\pi_{\RV_{n}}(\sigma^{n}(\tau)))|}{\text{diam}\left(\phi_{\tau|_{n-1},\RV}\left(X_{t(\tau_{n-1})}\right)\right)}
\leq K_{bd}D_\Phi.
\]
On the other hand, by Lemma \ref{lemma gibbs}, there exists a constant $C\geq 1$ such that for all $n\in\mathbb{N}$ we have 
\[
\frac{1}{CM_n}\leq \frac{\widetilde m_{\RV}^s([\tau|_{n-1}])}{|\phi'_{\tau|_{n-1},\RV}(\pi_{\RV_{n}}(\sigma^{n}(\tau)))|^s \exp(-P_{s\zeta}^n(\RV))}
\leq C,
\]
where $M_n:=\prod_{i=0}^{2N_A-1}M_{s\zeta}(\RV_{n+i})$.
Since $\Phi$ satisfy RBSC, for all $n\in\mathbb{N}$ we have $m^s_{\RV}(\phi_{\tau|_{n-1},\RV}(X_{t(\tau_{n-1})}))=\widetilde m_\RV^s\circ \pi_{\RV}^{-1}(\phi_{\tau|_{n-1},\RV}(X_{t(\tau_{n-1})}))=\widetilde m_\RV^s([\tau|_{n-1}])$.

We first consider the case where $p(s)\geq 0$. By above inequalities, 
for all $n\in\mathbb{N}$ we obtain
\begin{align*}
    &m^s_{\RV}(\phi_{\tau|_{n-1},\RV}(X_{t(\tau_{n-1})}))
    \geq 
    \frac{e^{-P_{s\zeta}^n(\RV)+np(s)}}{CM_n(K_{bd}D_\Phi)^s}e^{-np(s)}\left(\text{diam}\left(\phi_{\tau|_{n-1},\RV}\left(X_{t(\tau_{n-1})}\right)\right)\right)^s
    \\&\geq 
    \frac{e^{-P_{s\zeta}^n(\RV)+np(s)}}{CM_n(K_{bd}D_\Phi)^s}
    |\phi'_{\tau|_{n-1},\RV}(\pi_{\RV_{n}}(\sigma^{n}(\tau)))|^{p(s)/(\beta-\delta)}
    \left(\text{diam}\left(\phi_{\tau|_{n-1},\RV}\left(X_{t(\tau_{n-1})}\right)\right)\right)^s
    \\&
    \geq
        \frac{e^{-P_{s\zeta}^n(\RV)+np(s)}}{CM_n(K_{bd}D_\Phi)^{s+p(s)/(\beta-\delta)}}
    \left(\text{diam}\left(\phi_{\tau|_{n-1},\RV}\left(X_{t(\tau_{n-1})}\right)\right)\right)^{s+p(s)/(\beta-\delta)}.
    \end{align*}
    Hence, we obtain 
    \begin{align*}
        &\liminf_{r\to 0}\frac{\log m^s_{\RV}(B(x,r))}{\log r}
        \leq \liminf_{n\to\infty}\frac{\log m^s_{\RV}\left(B\left(x,\text{diam}\left(\phi_{\tau|_{n-1},\RV}\left(X_{t(\tau_{n-1})}\right)\right)\right)\right)}{\log\left(\text{diam}\left(\phi_{\tau|_{n-1},\RV}\left(X_{t(\tau_{n-1})}\right)\right)\right)}
        \\&\leq
s+\frac{p(s)}{\beta-\delta}
        +
        \liminf_{n\to\infty}\frac{-P^n_{s\zeta}(\RV)+np(s)-\log M_n}{\log \left(\text{diam}\left(\phi_{\tau|_{n-1},\RV}\left(X_{t(\tau_{n-1})}\right)\right)\right)}.
    \end{align*}
    We will show that 
    \[
    \liminf_{n\to\infty}\frac{-P^n_{s\zeta}(\RV)+np(s)-\log M_n}{\log \left(\text{diam}\left(\phi_{\tau|_{n-1},\RV}\left(X_{t(\tau_{n-1})}\right)\right)\right)}=0.
    \]
    Since $\RV\in \preinterval\cap \tailRE$, we have
    $p(s)=\lim_{n\to\infty}(1/n)P_{s\zeta}^n(\RV)$ and $\lim_{n\to\infty}(1/n)\log M_n=0$. Thus,
    for all $0<\epsilon<-\log \cc$ there exists $\widetilde N\geq 1$ such that for all $n\geq \widetilde N$ we have 
    $
    |-P^n_{s\zeta}(\RV)+np(s)|<n\epsilon,$ $\left|({1}/{n})\log M_n\right|<\epsilon$
     and 
    ${\log D_\Phi}/{n}<\epsilon
    .$
    Thus, by Lemma \ref{lemma diameter condition}, for all $0<\epsilon<-\log \cc$ and $n\geq \widetilde N$ we have
    \[
    \left|\frac{-P^n_{s\zeta}(\RV)+np(s)-\log M_n}{\log \left(\text{diam}\left(\phi_{\tau|_{n-1},\RV}\left(X_{t(\tau_{n-1})}\right)\right)\right)}\right|<\frac{2n\epsilon}{-\log D_\Phi-\log \left\|\phi'_{\tau|_{n-1},\RV}\right\|_{\infty}}\leq\frac{\epsilon}{-\log \cc}. 
    \]
    Thus, we obtain the desired result. Therefore, letting $\delta\to0$, we obtain
    \begin{align}\label{eq proof of the upper bound}
    \liminf_{r\to 0}\frac{\log m^s_{\RV}(B(x,r))}{\log r}\leq s+\frac{p(s)}{\beta}.    
    \end{align}
    As the same argument in the case $p(s)\geq0$, in the case $p(s)<0$, we can show the inequality (\ref{eq proof of the upper bound}).
Therefore, by \cite[Theorem 15.6.3 (b)]{urbanski2022non}, for all $s\in  Fin$ we obtain
$
l_\RV(\beta)\leq s+{p(s)}/{\beta}.
$
Since for $s\notin  Fin$ we have $s+{p(s)}/{\beta}=\infty$, we obtain 
$l_\RV(\beta)\leq ({1}/{\beta})\inf \left\{ p(s)+\beta s:s\in\mathbb{R}\right\}.$
\end{proof}

\begin{lemma}\label{lemma range of spectrum}
    Let $\Phi$ be a normal cofinitely regular RCGDMS satisfying  RBSC and let $F$ be a  subset of $E$ such that $\CF$ is finitely primitive. If $F$ is finite then for all $\RV\in\diR \cap\preintervalF$ we have
    $
    \left\{
    \beta\in\mathbb{R}:L_{\RV,F}(\beta)\neq \emptyset
    \right\}\subset [-p'_F(+\infty),-p'_F(-\infty)].
    $
    Moreover, for all $\RV\in\diR \cap\preinterval$ we have
    $
    \left\{
    \beta\in\mathbb{R}:L_{\RV}(\beta)\neq \emptyset
    \right\}\subset [-p'(+\infty),\infty).
    $
\end{lemma}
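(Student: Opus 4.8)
The plan is to reduce the non-emptiness of a level set to a one-sided bound on the relative topological pressure and then read off the admissible range of $\beta$ from convexity. First I would translate the Lyapunov exponent into a Birkhoff sum of the geometric potential. Since $\Phi$ satisfies RBSC, every $x\in J_{\RV,F}$ (resp.\ $x\in J_\RV$) has a unique code $\tau\in\CF$ (resp.\ $\tau\in\CE$), and from $T_{\RV,\Phi}|_{J_\RV\cap\phi_{\tau_0,\RV}(X_{t(\tau_0)})}=\phi_{\tau_0,\RV}^{-1}$ one computes $\log|(T_{\RV,\Phi}^n)'(x)|=-S_n\zeta(\tau,\RV)$. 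Hence $x\in L_{\RV,F}(\beta)$ (resp.\ $L_\RV(\beta)$) is equivalent to $\frac1nS_n\zeta_F(\tau,\RV)\to-\beta$ (resp.\ $\frac1nS_n\zeta(\tau,\RV)\to-\beta$).

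Next I would prove $p_F(s)\ge-s\beta$ for every $s\in\mathbb{R}$. Fixing such a code $\tau$, the single admissible word $\tau|_n$ contributes at least $\exp(sS_n\zeta_F(\tau,\RV))$ to the sum $A_{n,F}^\RV(s\zeta_F)$, because $\tau\in[\tau|_n]_F$ forces $\sup_{\tilde\tau\in[\tau|_n]_F}sS_n\zeta_F(\tilde\tau,\RV)\ge sS_n\zeta_F(\tau,\RV)$ regardless of the sign of $s$. For $\RV\in\diR\cap\preintervalF$, Lemma~\ref{lemma convexity of the toporogical pressure} together with the comparison chain of Lemma~\ref{lemma several form of prressure} identifies $\lim_n\frac1n\log A_{n,F}^\RV(s\zeta_F)=p_F(s)$ for all $s\in Fin_F$ simultaneously; dividing the displayed lower bound by $n$ and letting $n\to\infty$ gives $p_F(s)\ge-s\beta$, and for $s\notin Fin_F$ the inequality is automatic since $p_F(s)=\infty$. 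The identical argument with $F=E$ and $\RV\in\diR\cap\preinterval$ yields $p(s)\ge-s\beta$ for all $s\in\mathbb{R}$.

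Finally I would run a tangent-line argument on the convex function $q(s):=p_F(s)+\beta s$ (convexity from Lemma~\ref{lemma convexity of the toporogical pressure} and Lemma~\ref{lemma basic properties of the pressure}), which is nonnegative by the previous step. For finite $F$ one has $Fin_F=\mathbb{R}$, so $q$ is finite and convex on all of $\mathbb{R}$ and $q_+'$ is nondecreasing with $\lim_{s\to+\infty}q_+'(s)=p_F'(+\infty)+\beta$ and $\lim_{s\to-\infty}q_+'(s)=p_F'(-\infty)+\beta$. If $\beta<-p_F'(+\infty)$ then $q_+'$ is bounded above by the negative constant $p_F'(+\infty)+\beta$, so $q(s)\to-\infty$ as $s\to+\infty$; if $\beta>-p_F'(-\infty)$ then $q_+'$ is bounded below by the positive constant $p_F'(-\infty)+\beta$, so $q(s)\to-\infty$ as $s\to-\infty$. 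Either outcome contradicts $q\ge0$, forcing $\beta\in[-p_F'(+\infty),-p_F'(-\infty)]$. For the full system the same estimate at $+\infty$ gives $\beta\ge-p'(+\infty)$, while cofinite regularity makes $p(s)=\infty$ for $s<s_\infty$ so that no constraint arises from the left end; hence $\{\beta:L_\RV(\beta)\ne\emptyset\}\subset[-p'(+\infty),\infty)$.

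The convexity step is soft; the genuine work lies in securing the pointwise pressure identity $\frac1n\log A_{n,F}^\RV(s\zeta_F)\to p_F(s)$ along the orbit of the single fixed $\RV$ and simultaneously for every parameter $s$. This is exactly what the $\RD$-invariant full-measure set $\preintervalF$ (resp.\ $\preinterval$) of Lemma~\ref{lemma convexity of the toporogical pressure} was engineered to provide, and combining it with the single-word lower bound through the sandwich of Lemma~\ref{lemma several form of prressure} (valid on $\diR$) is the only delicate point I anticipate.
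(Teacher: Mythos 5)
Your proposal is correct and follows essentially the same route as the paper: both hinge on the single-cylinder lower bound $A_{n,F}^\RV(s\zeta_F)\geq\exp(sS_n\zeta_F(\tau,\RV))$ for the code $\tau$ of a point in the level set, giving $p_F(s)\geq-s\beta$ for all $s$, and then convert this to the stated range of $\beta$ via convexity. The only cosmetic difference is that the paper divides by $s$ and uses $p_F'(\pm\infty)=\lim_{s\to\pm\infty}p_F(s)/s$, whereas you run an equivalent tangent-line contradiction on $q(s)=p_F(s)+\beta s$.
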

\begin{proof}
Since $p_F$ is convex on $ Fin_F$, we have $p_F'(+\infty)=\lim_{s\to\infty}p_F(s)/s$. Moreover, if $F$ is a finite set then we have $p_F'(-\infty)=\lim_{s\to-\infty}p_F(s)/s$.
Let $\RV\in\diR \cap\preintervalF$ and let $\beta\in \left\{
    \beta\in\mathbb{R}:L_{\RV,F}(\beta)\neq \emptyset
    \right\}$. Then there exists $\tau\in \CF$ such that
    $
    \lim_{n\to\infty}({1}/{n})S_n\zeta_F(\tau,\RV)=-\beta.
    $
    By Lemma \ref{lemma several form of prressure}, for all $s>s_{\infty,F}$ we have
    \[
    p_F(s)=
    \lim_{n\to\infty}\frac{1}{n}\log \sum_{a\in F^{n}_A}\exp(\sup\{sS_n\zeta_F(\tilde{a},\RV):\tilde{a}\in[a]_F\})
    \geq
    \lim_{n\to\infty}\frac{s}{n}S_n\zeta_F(\tau,\RV)
    .
    \]
Therefore, we obtain 
     $-p'_F(+\infty)=-\lim_{s\to\infty}{p_F(s)}/{s}\leq \beta$.
     Moreover, if $F$ is a finite set then
     $-p'_F(-\infty)=-\lim_{s\to-\infty}{p_F(s)}/{s}\geq \beta$.
\end{proof}

\emph {Proof of Theorem $\ref{thm main}$.}
Theorem \ref{thm main} is a direct consequence of Theorem \ref{thm multifractal analysis for lyapunov compact}, Proposition \ref{prop lower bound of the lyapunov spectrum}, Proposition \ref{prop upper bound of the lyapunov spectrum} and Lemma \ref{lemma range of spectrum}. \qed

\subsection*{Acknowledgments}
This work was supported by the JSPS KAKENHI 25KJ1382.

\bibliographystyle{abbrv}
\bibliography{reference}
 \nocite{*}

\end{document}